\newtheorem{theorem}{Theorem}[section]
\newtheorem{proposition}[theorem]{Proposition}
\newtheorem{corollary}[theorem]{Corollary}
\theoremstyle{definition}
\newtheorem{remark}{Remark}
\newtheorem{problem}{Problem}
\begin{document}

\title[An equivalence principle]{An equivalence principle between polynomial and simultaneous Diophantine approximation}

\author{Johannes Schleischitz} 

\thanks{Supported by the Schr\"odinger Scholarship J 3824 of the Austrian Science Fund (FWF).\\
	University of Ottawa, Department of Mathematics and Statistics, King Edward 585, ON K1N 6N5 \\
	johannes.schleischitz@univie.ac.at}


\begin{abstract}
We show that Mahler's classification of real numbers $\zeta$
with respect to 
the growth of the sequence $(w_{n}(\zeta))_{n\geq 1}$ is equivalently induced by certain natural assumptions
on the decay of the sequence $(\lambda_{n}(\zeta))_{n\geq 1}$
concerning simultaneous rational approximation. 
Thereby we obtain a much clearer picture on simultaneous
approximation to successive powers of a real number in general.
Another variant of the Mahler classification
concerning uniform approximation by algebraic
numbers is derived as well.
Our method has several applications to classic exponents
of Diophantine approximation and metric theory. We deduce 
estimates on the Hausdorff dimension
of well-approximable vectors on the Veronese curve and
refine the best known upper bound for the exponent
$\widehat{\lambda}_{n}(\zeta)$ for even $n\geq 4$.
\end{abstract}

\maketitle

{\footnotesize{

{\em Keywords}: exponents of Diophantine approximation, Mahler's classification, parametric geometry of numbers \\
Math Subject Classification 2010: 11J13, 11J82, 11J83}}

\vspace{4mm}

\section{Classical exponents of Diophantine approximation} \label{intro}

In this paper we establish a link between 
classical intensely studied Diophantine approximation problems. 
Let $\zeta$ be a transcendental real number and $m,n$ be 
positive integers.
On one hand, we are concerned with
small polynomial evaluations $\vert P(\zeta)\vert$ for integer polynomials
$P$ of degree at most $n$, in terms of the height of $P$. 
This problem is known to be closely connected to approximation
to $\zeta$ by real algebraic numbers of degree 
at most $n$.
On the other hand, we deal with simultaneous rational approximation to 
$(\zeta,\zeta^2,\ldots,\zeta^m)$.
The latter problem is again directly linked to approximation
to $\zeta$ by real algebraic numbers (resp. integers) of degree 
at most $m$ (resp. $m+1$), see Davenport and Schmidt~\cite{davsh}. 
We establish connections between these classical problems, for
suitable pairs $m,n$. This will lead to a better understanding
of both classical problems individually.

Mahler introduced the classical exponent $w_{n}(\zeta)$
as the supremum of real numbers $w$ such that
\begin{equation}  \label{eq:www}
0<\vert P(\zeta)\vert \leq H(P)^{-w},  
\end{equation}
has infinitely many solutions $P\in{\mathbb{Z}[T]}$ of degree at most $n$.
Here $H(P)$ is the maximum modulus of the coefficients of $P$. 
Bugeaud and Laurent defined the uniform 
exponent $\widehat{w}_{n}(\zeta)$ 
as the supremum of $w\in{\mathbb{R}}$ such that the system 
\begin{equation}  \label{eq:w}
H(P) \leq X, \qquad  0<\vert P(\zeta)\vert \leq X^{-w},  
\end{equation}
has a solution $P\in{\mathbb{Z}[T]}$ of degree at most $n$
for all large $X$. It is easy to see the definition
of $w_{n}(\zeta)$ is equivalent to requiring \eqref{eq:w} to be satisfied
for certain arbitrarily large $X$. 
By Dirichlet's Theorem we have
\begin{equation} \label{eq:wmono}
w_{n}(\zeta)\geq \widehat{w}_{n}(\zeta)\geq n, \qquad\qquad n\geq 1.
\end{equation}
Moreover, since we admit more polynomials as $n$ increases
we obviously have
\begin{equation}  \label{eq:obv}
w_{1}(\zeta)\leq w_{2}(\zeta)\leq \cdots, \qquad \widehat{w}_{1}(\zeta)\leq \widehat{w}_{2}(\zeta)\leq \cdots.
\end{equation}
In this paper we focus on the best approximation 
exponents $w_{n}$, however some contributions to 
the uniform exponents $\widehat{w}_{n}$ will arise
as a byproduct as well,
particularly in Section~\ref{widhat}.
It is conjectured that \eqref{eq:wmono} and \eqref{eq:obv}
are the only limitations on sequences $(w_{n}(\zeta))_{n\geq 1}$
if we may choose any transcendental real $\zeta$.
We recall this partial assertion of the {\em Main problem} in~\cite[Section~3.4, page~61]{bugbuch}
on the {\em joint spectrum of $(w_{n}(\zeta))_{n\geq 1}$}.

\begin{problem} \label{mp}
Let $(w_{n})_{n\geq 1}$ be a non-decreasing sequence of real numbers with $w_{n}\geq n$.
Does there exist $\zeta$ such that $w_{n}(\zeta)=w_{n}$ 
simultaneously for all $n\geq 1$?
\end{problem}

Although a positive answer is strongly expected, only special cases have
been verified.
Mahler classified the transcendental real numbers in terms of the
growth of the sequence $(w_{n}(\zeta))_{n\geq 1}$. He called           
a transcendental real number $\zeta$ a $U_{m}$-number 
if $w_{m}(\zeta)=\infty$ and $m$ is the smallest such index.
The set of $U$-numbers is defined as the disjoint union of the sets of $U_{m}$-numbers over $m\geq 1$. Due to Mahler, 
a number $\zeta$ is called a $T$-number if $w_{n}(\zeta)<\infty$ for 
all $n\geq 1$, but $\limsup_{n\to\infty} w_{n}(\zeta)/n=\infty$ holds.
Finally, the remaining numbers for which $w_{n}(\zeta)/n\ll 1$ are called $S$-numbers. A famous result of Sprind\v{z}uk~\cite{sprindzuk} states that 
almost all real numbers in the sense of Lebesgue measure 
satisfy $w_{n}(\zeta)=n$ for all $n\geq 1$,
in particular they are $S$-numbers. Building up on results
of Baker and Schmidt~\cite{baks},
Bernik~\cite{bernik} refined this in a metrical sense 
by showing the formula
\begin{equation} \label{eq:bern}
\dim(\{\zeta\in\mathbb{R}: w_{n}(\zeta)\geq w\})=\dim(\{\zeta\in\mathbb{R}: w_{n}(\zeta)= w\})= \frac{n+1}{w+1}, \qquad \qquad w\geq n.
\end{equation}
Here and in the sequel $\dim$ denotes the Hausdorff dimension,
see~\cite{falconer} for an introduction. 
Generalizing~\cite{sprindzuk},~\cite{bernik} to
non-degenerate manifolds, and in other subtle ways,
is an active topic in modern metric
Diophantine approximation. However, this is not a major concern
of this paper and we only refer to~\cite{huang} for a recent, general result dealing with planar curves.
By \eqref{eq:bern} the sets of $U$-numbers and $T$-numbers in fact both have
dimension $0$. However, they are well-known to be non-empty, see
LeVeque~\cite{leveque} and Schmidt~\cite{tnumbers}. 
We refer to~\cite{bugbuch} for 
more results connected to Mahler's classification and related topics.

We want to relate the exponents $w_{n}(\zeta)$ to the
exponents of simultaneous approximation introduced by 
Bugeaud and Laurent~\cite{buglau}.
They define the exponent $\lambda_{n}(\zeta)$ as
the supremum of $\lambda\in{\mathbb{R}}$ such that the system 
\begin{equation}  \label{eq:lambda}
1\leq \vert x\vert \leq X, \qquad \max_{1\leq i\leq n} \vert \zeta^{i}x-y_{i}\vert \leq X^{-\lambda},  
\end{equation}
has a solution $(x,y_{1},y_{2},\ldots, y_{n})\in{\mathbb{Z}^{n+1}}$ 
for arbitrarily large values of $X$. 
Similarly, they denote by $\widehat{\lambda}_{n}(\zeta)$
the supremum of $\lambda$ such that \eqref{eq:lambda} has a 
solution for all $X\geq X_{0}$. For any transcendental real $\zeta$ 
and $n\geq 1$, by Dirichlet's Theorem these exponents satisfy
\begin{equation} \label{eq:ldiri}
\lambda_{n}(\zeta)\geq \widehat{\lambda}_{n}(\zeta)\geq \frac{1}{n}.
\end{equation}
Moreover from the definition we see that
\begin{equation} \label{eq:reihenfolge}
\lambda_{1}(\zeta)\geq \lambda_{2}(\zeta)\geq \cdots, 
\qquad \widehat{\lambda}_{1}(\zeta)\geq \widehat{\lambda}_{2}(\zeta)\geq \cdots.
\end{equation}
Khintchine~\cite{khint} was the first to show a relation between
polynomial and simultaneous approximation. His transference principle 
asserts
\begin{equation} \label{eq:khintchine}
\frac{w_{n}(\zeta)}{(n-1)w_{n}(\zeta)+n}\leq \lambda_{n}(\zeta)\leq \frac{w_{n}(\zeta)-n+1}{n}.
\end{equation}
So far, essentially the transference principle has been the 
only tool for comparing the sequences
$(w_{n}(\zeta))_{\geq 1}$ and $(\lambda_{n}(\zeta))_{n\geq 1}$. It provides very limited information. 
In particular it is far from clear what to expect for the joint behavior of the sequences $(\lambda_{n}(\zeta))_{n\geq 1}$,
in contrast to the precise conjecture in Problem~\ref{mp}.
In Section~\ref{spectrum} below we will quote the few previously known 
results.
Our main result Theorem~\ref{nabum} will provide vastly refined information on the interaction of the
two sequences $(w_{n}(\zeta))_{n\geq 1}$ and 
$(\lambda_{n}(\zeta))_{n\geq 1}$ for given $\zeta$, and thereby lead
to a much better understanding of the latter sequence as a byproduct.
Roughly speaking our method is based on comparison of $w_{n}(\zeta)$
with $\lambda_{m}(\zeta)$ for suitable pairs $m,n$. In fact we
usually take $m$ to be reasonably larger than $n$, in contrast to
$m=n$ in \eqref{eq:khintchine}. We emphasize that
for our method it is crucial that we deal with successive powers
of a real number, whereas \eqref{eq:khintchine} remains valid
for any vector $\underline{\zeta}\in\mathbb{R}^{n}$
that is $\mathbb{Q}$-linearly independent together with $\{1\}$, 
with accordingly altered definitions of the exponents.

\section{Equivalence principles} \label{eqprin}

\subsection{Simultaneous approximation}
In our main result we link the sequences of exponents $(w_{n}(\zeta))_{n\geq 1}$ and $(\lambda_{n}(\zeta))_{n\geq 1}$.

\begin{theorem}[Equivalence principle I] \label{nabum}
Let $\zeta$ be a transcendental real number. Then $\zeta$ is a $U$-number if and only if
\begin{equation} \label{eq:uu}
\lim_{n\to\infty} \lambda_{n}(\zeta)>0.
\end{equation}
More precisely, if $\zeta$ is a $U_{m}$-number, then $\lambda_{n}(\zeta)=\frac{1}{m-1}$ for all sufficiently large $n$.
Moreover, $\zeta$ is a $T$-number if and only if
\[
\lim_{n\to\infty} \lambda_{n}(\zeta)=0, \qquad \limsup_{n\to\infty} n\lambda_{n}(\zeta)=\infty. 
\]
Finally $\zeta$ is an $S$-number if and only if
\[
\limsup_{n\to\infty} n\lambda_{n}(\zeta)<\infty.
\]
\end{theorem}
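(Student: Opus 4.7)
The plan is to refine the comparison between $w_n(\zeta)$ and $\lambda_m(\zeta)$ by comparing \emph{different} indices $n\neq m$, exploiting the algebraic structure of successive powers $(\zeta,\zeta^2,\ldots,\zeta^m)$ on the Veronese curve, and thereby going substantially beyond the same-index transference \eqref{eq:khintchine}.

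The first ingredient is a ``polynomial-to-simultaneous'' direction: if $w_n(\zeta)\geq w$, then $\lambda_m(\zeta)\geq 1/(n-1)-\varepsilon(w,n)$ for every $m\geq n-1$, with $\varepsilon(w,n)\to 0$ as $w\to\infty$. The construction takes a witness polynomial $P$ of degree at most $n$ with $|P(\zeta)|\leq H(P)^{-w}$, locates a real root $\alpha$ of $P$ close to $\zeta$ via standard derivative estimates, and uses that $\alpha$ is algebraic of degree at most $n$, so $\alpha^i$ for $i\geq n$ is a $\mathbb{Q}$-linear combination of $1,\alpha,\ldots,\alpha^{n-1}$ with denominators bounded in terms of the minimal polynomial of $\alpha$. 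Dirichlet's theorem in $\mathbb{R}^{n-1}$ applied to $(\alpha,\alpha^2,\ldots,\alpha^{n-1})$ extends via these relations to all coordinates up to index $m$, and the closeness $|\zeta^i-\alpha^i|=O(|\zeta-\alpha|)$ transfers the approximation to $\zeta$ after optimising the range of $X$ against $|\zeta-\alpha|$. For a $U_m$-number $\zeta$ this gives $\lambda_n(\zeta)\geq 1/(m-1)$ for all sufficiently large $n$.

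The second ingredient is the reverse ``simultaneous-to-polynomial'' direction. Given integer tuples $(x,y_1,\ldots,y_m)$ with $1\leq|x|\leq X$ and $|x\zeta^i-y_i|\leq X^{-\lambda}$, the identity
\[
xP(\zeta)=\sum_{i=0}^{k}a_i y_i+\sum_{i=1}^{k}a_i(x\zeta^i-y_i),\qquad y_0:=x,
\]
combined with Minkowski's theorem applied to the hyperplane $\sum_{i=0}^{k}a_i y_i=0$ in $\mathbb{Z}^{k+1}$, produces a nonzero polynomial $P$ of degree at most $k$ with $H(P)\ll X^{1/k}$ and $|P(\zeta)|\ll H(P)X^{-\lambda}$, hence $w_k(\zeta)\geq k\lambda-1$ as soon as $k\lambda>1$. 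This one-shot Minkowski bound is too weak on its own to recover the full trichotomy; the essential sharpening, and main obstacle of the proof, is to exploit the Veronese structure (through parametric geometry of numbers or an iterative bootstrap alternating with the first direction) so as to strengthen $w_k\geq k\lambda-1$ into bounds tightly coupling $w_n/n$ with $n\lambda_n$ and forcing $w_k(\zeta)=\infty$ whenever $\lim_n \lambda_n(\zeta)>0$.

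Assembling the two directions yields the full statement. The precise value $\lim\lambda_n(\zeta)=1/(m-1)$ in the $U_m$ case is pinned down by combining the lower bound from the first direction with the fact that $\lambda_n>1/(m-1)$ for large $n$ would, via the sharpened second direction at $k=m-1$, contradict the finiteness $w_{m-1}(\zeta)<\infty$ demanded by the minimality of $m$ in the $U_m$ definition. For non-$U$ numbers, monotonicity \eqref{eq:reihenfolge} then forces $\lambda_n(\zeta)\to 0$, and the $T$/$S$ split is read off the behaviour of $n\lambda_n$: the sharpened simultaneous-to-polynomial direction matches $\limsup n\lambda_n<\infty$ with $\sup_n w_n/n<\infty$ (the $S$-number case), and $\limsup n\lambda_n=\infty$ with $\limsup w_n/n=\infty$ (the $T$-number case).
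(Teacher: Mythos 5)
The crux of the theorem is the ``simultaneous-to-polynomial'' direction, and you do not actually supply it. Your one-shot Minkowski argument yields only $w_k(\zeta)\geq k\lambda-1$ with $\lambda=\lambda_N(\zeta)$, $N\geq k$, which (since $\lambda_N$ is non-increasing in $N$) is no better than $w_k\geq k\lambda_k(\zeta)-1$ and is in fact weaker than Khintchine's \eqref{eq:khintchine}. From $\lim_n\lambda_n(\zeta)=c>0$ this gives $w_k(\zeta)\gtrsim ck$ for all $k$, a linear growth condition that never forces $w_m(\zeta)=\infty$ for any fixed $m$; so it does not prove that $\zeta$ is a $U$-number, nor the $T$/$S$ dichotomy. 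What is actually needed is the coupling in Corollary~\ref{juchurra}, namely $\lambda_N(\zeta)>1/n\Rightarrow w_n(\zeta)>N-2n+1$, in which the \emph{large} index $N$ appears on the right with a \emph{fixed} $n$; letting $N\to\infty$ then genuinely forces $w_n(\zeta)=\infty$. The paper gets this (Theorem~\ref{juchu}) by keeping all $N+1$ coordinates and working in the dual polynomial problem: one takes uniform best-approximation polynomials $P_X$ of degree $\leq n$ and $Q_X$ of degree $\leq k$, factors $Q_X=R_XS_X$ with $R_X$ the part sharing irreducible factors with $P_X$ and $S_X$ coprime to $P_X$, uses $w_n(\zeta)<\infty$ and Gelfond's Lemma \eqref{eq:wirsing} to bound $|R_X(\zeta)|$ below, concludes $|S_X(\zeta)|$ is small, and then the shifted families $\{T^jP_X\}\cup\{T^jS_X\}$ give $N+1$ linearly independent polynomials of degree $\leq N=n+k-1$ with controlled heights and small values, bounding the last uniform minimum $\widehat{w}_{N,N+1}(\zeta)$; Mahler duality \eqref{eq:mahla} then translates this into the desired upper bound on $\lambda_N(\zeta)$. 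None of this — the coprime factorisation, Gelfond, the linearly independent family, or duality — is in your sketch, and the phrase ``iterative bootstrap alternating with the first direction'' is not a substitute: each pass of your Minkowski step throws away the coordinates $y_{k+1},\dots,y_N$, which is precisely where the strength resides.

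Your first ingredient (locate an algebraic $\alpha$ near $\zeta$ via a witness polynomial, approximate $(\alpha,\dots,\alpha^{\deg\alpha-1})$ by Dirichlet, propagate to all powers through the minimal-polynomial relations, and transfer back to $\zeta$) is a genuinely different and more classical route than the paper's, which instead uses Proposition~\ref{wpropos} together with the Schmidt--Summerer inequalities \eqref{eq:duales}, Mahler's relations \eqref{eq:jaja}, and the transfer formulas \eqref{eq:umrechnen}--\eqref{eq:umrechnen2}. It should work cleanly in the $U_m$ case, where the error vanishes. But as stated, the claim ``$\lambda_m(\zeta)\geq 1/(n-1)-\varepsilon(w,n)$ for \emph{every} $m\geq n-1$'' with $\varepsilon$ independent of $m$ cannot hold for finite $w$ unless it is vacuous: the transfer is only valid for $X\ll|\zeta-\alpha|^{-1/(1+\lambda)}\approx H(P)^{(w+1)/(1+\lambda)}$, and moreover the denominators in the expansion of $\alpha^i$ in $1,\alpha,\dots,\alpha^{\deg\alpha-1}$ grow with $i$, so the valid range of $m$ is bounded in terms of $w$. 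Tracking this range is exactly what produces the quantitative lower bound $\overline{\lambda}(\zeta)\geq(\overline{w}(\zeta)+1)^2/(4\overline{w}(\zeta))$ of Theorem~\ref{tnumber}, which is what makes the $T$/$S$ split go through; your assembly paragraph asserts the split but neither direction of your sketch is calibrated to deliver it.
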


The theorem shows that Mahler's classification can be equivalently obtained by natural assumptions
on the decay of the sequence $(\lambda_{n}(\zeta))_{n\geq 1}$.
The transference principle \eqref{eq:khintchine} admits the conclusion
$\lim_{n\to\infty} \lambda_{n}(\zeta)=0$ upon 
$\liminf_{n\to\infty} w_{n}(\zeta)/n=1$, a reasonably stronger condition
than $\zeta$ being no $U$-number. Hence \eqref{eq:khintchine} does not
rule out \eqref{eq:uu} even for certain $S$-numbers.
As a first corollary we determine all limits of the 
sequences $(\lambda_{n}(\zeta))_{n\geq 1}$.

\begin{corollary}  \label{lamkor}
The set $\mathcal{S}$ of all values $\lim_{n\to\infty} \lambda_{n}(\zeta)$ as $\zeta$ attains any transcendental real number
is precisely the countable set $\mathcal{S}=\{0,\infty\}\cup \{1,\frac{1}{2},\frac{1}{3},\ldots \}$. 
\end{corollary}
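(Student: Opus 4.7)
The plan is to extract the corollary directly from Theorem~\ref{nabum}, supplemented by the known existence of $U_m$-numbers for every $m\geq 1$. By the monotonicity in \eqref{eq:reihenfolge} the sequence $(\lambda_{n}(\zeta))_{n\geq 1}$ is non-increasing, so the limit $L(\zeta):=\lim_{n\to\infty}\lambda_{n}(\zeta)\in[0,+\infty]$ exists for every transcendental $\zeta$, and the problem reduces to identifying which values in $[0,+\infty]$ can occur.

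For the inclusion $\mathcal{S}\subseteq\{0,\infty\}\cup\{1,\frac{1}{2},\frac{1}{3},\ldots\}$, I split by Mahler's classification. If $\zeta$ is an $S$- or $T$-number, Theorem~\ref{nabum} forces $L(\zeta)=0$. If $\zeta$ is a $U_{m}$-number with $m\geq 2$, the same theorem gives $\lambda_{n}(\zeta)=\frac{1}{m-1}$ for all sufficiently large $n$, so $L(\zeta)=\frac{1}{m-1}$. In the remaining case of a $U_{1}$-number (a Liouville number) the formula degenerates, and I argue directly that $L(\zeta)=\infty$: starting from a rational $p/q$ with $|\zeta-p/q|\leq q^{-N}$, take $x=q^{n}$ and $y_{i}=p^{i}q^{n-i}$; the telescoping identity
\[
\zeta^{i}q^{i}-p^{i}=(\zeta q-p)\sum_{j=0}^{i-1}(\zeta q)^{j}p^{i-1-j}
\]
yields $|\zeta^{i}x-y_{i}|\ll x^{-(N-n)/n}$, and since Liouville's condition allows $N$ to be chosen arbitrarily large we conclude $\lambda_{n}(\zeta)=\infty$ for every $n$, hence $L(\zeta)=\infty$.

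For the reverse inclusion, each candidate value is realized explicitly. The value $L=0$ is attained by almost every real by Sprind\v{z}uk's theorem (which provides $S$-numbers in abundance). The value $L=\infty$ is attained by any Liouville number. For each integer $m\geq 2$, the existence of a $U_{m}$-number established by LeVeque~\cite{leveque} furnishes, via Theorem~\ref{nabum}, a $\zeta$ with $L(\zeta)=\frac{1}{m-1}$. The main obstacle is mild: once Theorem~\ref{nabum} is granted the corollary is essentially bookkeeping, the only step requiring independent input being the direct handling of the $U_{1}$ case sketched above.
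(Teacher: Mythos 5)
Your proof is correct and follows the same basic route as the paper: decompose by Mahler's classification and read off the limit from Theorem~\ref{nabum}. You are somewhat more careful than the paper's own two-line proof on two points. First, the paper's formula $1/(m-1)$ silently uses the convention $1/0=\infty$ to cover Liouville numbers, but Theorem~\ref{unumber} (the ingredient behind Theorem~\ref{nabum}) is stated only for $m\geq 2$; the paper handles the $U_1$ case by citing \eqref{eq:herz} and \cite[Corollary~2]{bug} earlier in the text, whereas you give a direct telescoping argument that $\lambda_n(\zeta)=\infty$ for every $n$ when $\zeta$ is Liouville. Your computation is sound: from $|q\zeta-p|\leq q^{-N}$ the choice $x=q^n$, $y_i=p^iq^{n-i}$ gives $|\zeta^ix-y_i|\ll_{\zeta,n} x^{-(N+1-n)/n}$, and letting $N\to\infty$ yields the claim. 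Second, you make the reverse inclusion explicit by invoking Sprind\v{z}uk for $S$-numbers, Liouville numbers for $\infty$, and LeVeque for $U_m$-numbers at each $m\geq 2$; the paper leaves this as "the claim follows", though these existence results are all mentioned elsewhere in the text. In short: same strategy, with the gaps filled in.
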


\begin{proof}
For any $S$-number and $T$-number the limit is $0$ by Theorem~\ref{nabum}. For a $U_{m}$-number the limit 
is $1/(m-1)$ again by Theorem~\ref{nabum}. The claim follows.
\end{proof}

\begin{remark}
Previous results recalled
in Section~\ref{spectrum} below could have settled
$\{0,1,\infty\} \subseteq \mathcal{S}\subseteq [0,1]\cup\{\infty\}$. 
Indeed, the inclusion
$\{0,1,\infty\} \subseteq \mathcal{S}$ follows
from Sprind\v{z}uk~\cite{sprindzuk} 
and Bugeaud~\cite[Theorem~4, Corollary~2]{bug},
whereas the 
consequence~\cite[Corollary~1.9]{schlei} 
of \eqref{eq:trampel} implies $(1,\infty)\cap \mathcal{S}= \emptyset$. 
\end{remark}

We can provide
effective relations between the sequences $(w_{n}(\zeta))_{\geq 1}$ 
and $(\lambda_{n}(\zeta))_{n\geq 1}$.
We recall the notion of                                                                                  
the order $\tau(\zeta)$ of a $T$-number~\cite{bugbuch}, defined as
\[
\tau(\zeta)=\limsup_{n\to\infty} \frac{\log w_{n}(\zeta)}{\log n}.
\]
We have $\tau(\zeta)\in[1,\infty]$ for any $T$-number $\zeta$ by \eqref{eq:wmono}.
All $T$-numbers that have been constructed so far have 
order $\tau(\zeta)\geq 3$, 
and R. Baker~\cite{rbaker}
conversely constructed $T$-numbers of the given degree $\tau(\zeta)\in[3,\infty]$.
See also~\cite[Theorem~7.2]{bugbuch}, however there seems to be a problem in the proof 
as in (7.28) a stronger estimate than the assumption (7.24) is used.
A positive answer to
Problem~\ref{mp} would clearly imply that $T$-numbers of 
any degree $\tau(\zeta)\in[1,\infty]$ exist. 
We propose a somehow dual order $\sigma(\zeta)$, defined as 
\[
\sigma(\zeta)=\limsup_{n\to\infty} \frac{\log \lambda_{n}(\zeta)}{\log n}.
\]
It follows from \eqref{eq:ldiri} and \eqref{eq:reihenfolge} that
$\sigma(\zeta)\in[-1,0]$ for any $\zeta$ which is not a Liouville number
(i..e a $U_{1}$-number). In fact even 
$\log \lambda_{n}(\zeta)/\log n\leq 0$ for all large $n$ 
by~\cite[Theorem~1.6]{schlei}.
Further define
\begin{equation} \label{eq:defwoben}
\overline{w}(\zeta):= \limsup_{n\to\infty} \frac{w_{n}(\zeta)}{n}, \qquad \overline{\lambda}(\zeta):= \limsup_{n\to\infty} n\lambda_{n}(\zeta), 
\end{equation}
and
\begin{equation} \label{eq:defwunten}
\underline{w}(\zeta):= \liminf_{n\to\infty} \frac{w_{n}(\zeta)}{n}, \qquad \underline{\lambda}(\zeta):= \liminf_{n\to\infty} n\lambda_{n}(\zeta). 
\end{equation}
The set of $S$-numbers equals the set
of numbers with $\overline{w}(\zeta)<\infty$.
For $S$-numbers and $T$-numbers of order $\tau(\zeta)=1$, 
the quantities $\overline{w}(\zeta), \underline{w}(\zeta)$ provide
a refined measure. Similarly 
$\overline{\lambda}(\zeta), \underline{\lambda}(\zeta)$ 
refine $\sigma(\zeta)$.
We obtain connections between the quantities 
as follows.

\begin{theorem} \label{genau}
For any real transcendental $\zeta$ we have
\begin{equation} \label{eq:leftie} 
\frac{(\overline{w}(\zeta)+1)^{2}}{4\overline{w}(\zeta)} \leq
\overline{\lambda}(\zeta) \leq \overline{w}(\zeta)+2, \qquad  \frac{(\underline{w}(\zeta)+1)^{2}}{4\underline{w}(\zeta)} \leq 
\underline{\lambda}(\zeta)\leq \underline{w}(\zeta)+2,   
\end{equation}
and moreover
\begin{equation} \label{eq:hadschi}
\sigma(\zeta)=-\frac{1}{\tau(\zeta)}.
\end{equation}
\end{theorem}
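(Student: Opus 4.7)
The plan is to deduce both claims from the quantitative pointwise transference estimates that underlie Theorem~\ref{nabum}. Such estimates should take the form: for each large $m$ there is an index $n \leq m$ with $m \lambda_m(\zeta)$ bounded above by roughly $w_n(\zeta)/n + O(1)$; and conversely, from each near-extremal polynomial realising $w_n(\zeta)$ one constructs, for a suitable $m \geq n$, a simultaneous rational approximation forcing $m \lambda_m(\zeta)$ to be at least approximately $(w_n(\zeta)/n + 1)^{2}/(4\, w_n(\zeta)/n)$. Passing to $\limsup$ and $\liminf$ in these pointwise bounds then yields the four inequalities in \eqref{eq:leftie} in one clean step.

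For the upper side I would start from a near-extremal solution $(x,y_{1},\ldots,y_{m})$ of \eqref{eq:lambda} and construct an integer polynomial of degree at most $n$ with controlled height and small value at $\zeta$, which plugs directly into the definition of $w_{n}(\zeta)$; the linear dependence of $n\lambda_n$ on $w_n/n$ matches the $+2$ offset in the upper bound. For the lower side, given a near-extremal polynomial $P$ with $|P(\zeta)| \leq H(P)^{-w}$, let $\alpha$ be its root closest to $\zeta$, so that $\alpha$ is algebraic of degree at most $n$; then $(\zeta,\zeta^{2},\ldots,\zeta^{m})$ is well approximated by the algebraic vector $(\alpha,\alpha^{2},\ldots,\alpha^{m})$, whose coordinates share a common denominator bounded in terms of $H(P)$ and the conjugates of $\alpha$. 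Choosing $m$ to balance the approximation error $|\zeta - \alpha|$ against the height of the resulting rational vector produces the characteristic factor $(w+1)^{2}/(4w)$ after an elementary scalar optimisation in $m$.

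For the identity $\sigma(\zeta) = -1/\tau(\zeta)$ I would take logarithms in the same sharp pointwise transference. Substituting $w_{n}(\zeta) \sim n^{\tau}$ into the transference and optimising over $m$ gives $\log \lambda_{m}/\log m \to -1/\tau$ along the extremising subsequence, hence $\sigma(\zeta) \geq -1/\tau(\zeta)$; the reverse inequality follows symmetrically from the upper side, using that if $\lambda_{m} \sim m^{\sigma}$ along a subsequence then the implied polynomials satisfy $w_{n} \gtrsim n^{-1/\sigma}$ for suitable $n$. The main obstacle I anticipate is the precise bookkeeping in the lower-side construction: one must verify that the common denominator extracted from $\alpha$ is not much larger than $H(P)$, and that the errors $|\zeta^{i} - \alpha^{i}|$ propagate cleanly for all $i \leq m$, so that the optimal balance of $m$ as a function of $n$ and $w_n$ is actually attainable. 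Granted the quantitative machinery from Theorem~\ref{nabum}, the rest of the argument reduces to routine asymptotic manipulation with $\limsup$'s and $\liminf$'s.
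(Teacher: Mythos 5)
Your proposal diverges substantially from the paper's proof, and the lower-bound side contains a genuine gap.

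The paper does not prove Theorem~\ref{genau} by any direct transference going through a root $\alpha$ of $P$ and its powers. Instead, the upper inequalities $\overline{\lambda}\leq\overline{w}+2$ and $\underline{\lambda}\leq\underline{w}+2$ are lifted straight from Corollary~\ref{juchurra}, which itself is proved (in Theorem~\ref{juchu}) by a coprime factorisation argument: one takes a uniformly small polynomial $P_X$ of degree $\leq n$ and a uniformly small polynomial $Q_X$ of larger degree, removes from $Q_X$ the factors it shares with $P_X$ via Gelfond's lemma, and uses the shifts $T^jP_X$, $T^jS_X$ of the resulting coprime pair to span the whole space of polynomials of degree $\leq N$; this bounds the \emph{last} successive minimum $\widehat w_{N,N+1}$, and Mahler duality \eqref{eq:mahla} converts this into an upper bound on $\lambda_N$. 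Your sketch of ``start from a near-extremal $(x,y_1,\dots,y_m)$ and construct a polynomial'' is going the opposite direction (simultaneous $\to$ polynomial) and does not engage the last-successive-minimum / duality mechanism that actually produces the $+2$ offset; there is no indication how you would make $\sum a_i y_i$ small.

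The lower inequalities $(\overline w+1)^2/(4\overline w)\leq\overline\lambda$, $(\underline w+1)^2/(4\underline w)\leq\underline\lambda$ come from Theorem~\ref{tnumber}, whose proof is Proposition~\ref{wpropos} (the ``shift'' trick $w_{m+n,m+1}\geq w_n$, obtained by adjoining $TP_1, T^2P_1,\dots,T^mP_1$) followed by the Schmidt--Summerer inequalities \eqref{eq:duales}, Mahler duality \eqref{eq:jaja}, and the identities \eqref{eq:umrechnen}--\eqref{eq:umrechnen2}, with an elementary optimisation of $m\approx n(C-1)/2$. Your proposed route through the closest root $\alpha$ of $P$ and the vector $(\alpha,\dots,\alpha^m)$ runs into two concrete problems. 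First, the loss between $|P(\zeta)|$ and $|\zeta-\alpha|$ can be as large as $H(P)^{n-1}$ (this is exactly the spread $w_n-w_n^\ast\leq n-1$ in \eqref{eq:duzu}), so replacing the polynomial data by the root data weakens the exponent exactly where you need it to be sharp; the factor $(w+1)^2/(4w)$ is tied to $w_n$, not $w_n^\ast$. Second, the phrase ``whose coordinates share a common denominator'' does not quite make sense: for $\alpha$ of degree $\geq 2$, the powers $\alpha^i$ are irrational, and the rational approximations to $(\alpha,\dots,\alpha^m)$ have their own controlled quality (as in the proof of Theorem~\ref{unumber}) which is essentially $q^{-1/(n-1)}$, independent of $w$ — this would not recover the $w$-dependent factor $(w+1)^2/(4w)$ no matter how you balance $m$. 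The paper sidesteps all of this by working at the level of successive minima of the polynomial lattice, never passing to a single algebraic number.

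Your treatment of $\sigma(\zeta)=-1/\tau(\zeta)$ is consistent in spirit with the paper's two lines, provided one feeds in the sharp estimates \eqref{eq:vor2} and \eqref{eq:bratschi} rather than Khintchine's $m=n$ transference (the latter only gives $\sigma\leq -1$ from $w_n\sim n^\tau$, not $\sigma\leq -1/\tau$). But as you have not established those sharp estimates by your proposed route, this part also inherits the gap. In short: the parametric geometry of numbers machinery (Proposition~\ref{wpropos}, the Schmidt--Summerer relations \eqref{eq:simul}--\eqref{eq:umrechnen2}, and Mahler duality) is the essential ingredient that your proposal omits and for which the algebraic-number construction is not an adequate substitute.
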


In the theorem and generally for the sequel
we always agree on $1/\infty=0$ and $1/0=+\infty$.
There is no reason to believe that the bounds in \eqref{eq:leftie} are optimal. 
It is tempting to conjecture that $\overline{w}(\zeta)= \overline{\lambda}(\zeta)$ and $\underline{w}(\zeta)= \underline{\lambda}(\zeta)$
hold for any transcendental real $\zeta$.

\subsection{Uniform approximation by algebraic numbers}

In this section we establish another equivalence principle.
We connect the Mahler classification with exponents of 
uniform approximation
to a real number by algebraic numbers of degree bounded by some $n$.
Let $w_{n}^{\ast}(\zeta)$ and $\widehat{w}_{n}^{\ast}(\zeta)$
be the supremum of $w^{\ast}$ such that the system
\begin{equation} \label{eq:stern}
H(\alpha)\leq X, \qquad
\vert \zeta-\alpha\vert \leq H(\alpha)^{-1}X^{-w^{\ast}}
\end{equation}
has a real algebraic solution $\alpha$ of degree at most $n$, for arbitrarily large and all large $X$, respectively.
Here $H(\alpha)=H(P)$ for $P$ the (up to sign) unique minimal polynomial of $\alpha$ with coprime integral coefficients. 
These exponents are closely linked 
to the polynomial exponents $w_{n}(\zeta), \widehat{w}_{n}(\zeta)$. 
In particular, 
the same partition of the transcendental real numbers is induced by replacing $w_{n}$ in the Mahler classification above 
by $w_{n}^{\ast}$, 
as proposed by Koksma. Indeed this is an immediate consequence of
the estimates
\begin{equation} \label{eq:duzu}
w_{n}^{\ast}(\zeta)\leq w_{n}(\zeta)\leq w_{n}^{\ast}(\zeta)+n-1,
\end{equation}
from ~\cite[Lemma~A8]{bugbuch}. The analogous estimates hold
for the uniform exponents, and together with upper bounds 
by Davenport and Schmidt~\cite{davsh} we may comprise
\begin{equation} \label{eq:bound2}
\widehat{w}_{n}^{\ast}(\zeta)\leq \widehat{w}_{n}(\zeta)\leq
\min\{ 2n-1, \widehat{w}_{n}^{\ast}(\zeta)+n-1 \}.
\end{equation}
The bound $2n-1$ has in fact been slightly improved
in~\cite{bschlei} and further recently in~\cite{ichacta}.
We show that Mahler's classification is obtained as well by 
imposing natural 
assumptions on the sequence of uniform 
exponents $\widehat{w}_{n}^{\ast}(\zeta)$.

\begin{theorem}[Equivalence principle II] \label{dadkor}
Let $\zeta$ be a transcendental real number. Then $\zeta$ is a $U$-number
if and only if 
\begin{equation} \label{eq:limites}
\lim_{n\to\infty} \widehat{w}_{n}^{\ast}(\zeta)<\infty.
\end{equation}
More precisely, if $\zeta$ is a $U_{m}$-number, then $\widehat{w}_{n}^{\ast}(\zeta)\in[m-1,m]$ for all sufficiently large $n$. 
Moreover, $\zeta$ is a $T$-number if and only if 
\begin{equation} \label{eq:tnu}
\lim_{n\to\infty} \widehat{w}_{n}^{\ast}(\zeta)=\infty, \qquad
\liminf_{n\to\infty} \frac{\widehat{w}_{n}^{\ast}(\zeta)}{n}=0.
\end{equation}
Finally $\zeta$ is an $S$-number if and only if 
there exists a constant $\delta>0$ such that
$\widehat{w}_{n}^{\ast}(\zeta)\geq \delta n$ for all $n\geq 1$.
\end{theorem}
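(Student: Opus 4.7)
The plan is to deduce Theorem~\ref{dadkor} from the already-established Theorem~\ref{nabum} by means of a sharp transference between $\widehat{w}_n^*(\zeta)$ and $\lambda_n(\zeta)$. The guiding heuristic is that $\widehat{w}_n^*(\zeta)$ should be comparable to $1/\lambda_n(\zeta)$ up to a small additive constant, uniformly in $n$ (possibly after replacing $\lambda_n$ by $\lambda_m$ for a slightly larger $m$, as the paper hints is typical of its method). Once this is in place, each of the three characterizations of Theorem~\ref{dadkor} falls out by inserting the $\lambda_n$-form of the Mahler classes given by Theorem~\ref{nabum}.

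First I would prove a two-sided bound of roughly the shape
\[
\frac{1}{\lambda_n(\zeta)} - c_1 \;\leq\; \widehat{w}_n^*(\zeta) \;\leq\; \frac{1}{\lambda_n(\zeta)} + c_2,
\]
with explicit small constants. The lower bound follows from a classical Wirsing-type construction: a near-optimal integer solution to the simultaneous system \eqref{eq:lambda} at height $X$ yields, by passing to the dual polynomial of degree $\leq n$ and then extracting the root nearest $\zeta$, an algebraic number $\alpha$ of degree $\leq n$ realising the uniform bound defining $\widehat{w}_n^*(\zeta)$. The upper bound is obtained in the reverse direction: for each large $X$, the algebraic approximation $\alpha$ witnessing $\widehat{w}_n^*(\zeta)$ has powers $\alpha^i$ that approximate $\zeta^i$ well, and clearing denominators inside the $\alpha^i$ produces integer vectors approximating $x(\zeta,\zeta^2,\ldots,\zeta^n)$ to the required accuracy.

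The three equivalences then follow by substitution into Theorem~\ref{nabum}. In the $U_m$-case, $\lambda_n(\zeta)=1/(m-1)$ for all large $n$ forces $\widehat{w}_n^*(\zeta)$ into a bounded interval around $m-1$, and the refinement to the closed interval $[m-1,m]$ is obtained by combining the transference with \eqref{eq:bound2} and the observation that the algebraic numbers driving $w_m(\zeta)=\infty$ must have degree exactly $m$ by minimality of $m$. The $T$-case translates $\lim\lambda_n=0$ and $\limsup n\lambda_n=\infty$ directly into $\lim\widehat{w}_n^*=\infty$ and $\liminf\widehat{w}_n^*/n=0$. In the $S$-case, $\limsup n\lambda_n(\zeta)<\infty$ yields $\widehat{w}_n^*(\zeta)\geq\delta n$ for all sufficiently large $n$; the universal lower bound $\widehat{w}_n^*(\zeta)\geq 1$ implied by \eqref{eq:bound2} and \eqref{eq:wmono} then lets one absorb the finitely many small $n$ by decreasing $\delta$.

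The main obstacle is obtaining the transference with constants sharp enough to pin $\widehat{w}_n^*(\zeta)$ down to the closed interval $[m-1,m]$ in the $U_m$-case rather than to a vaguely bounded neighbourhood of $m-1$; the other two cases are robust under any $O(1)$ additive error. I anticipate that achieving this precision will require supplementing the generic transference with the structural information specific to $U_m$-numbers (degree exactly $m$ of the relevant minimal polynomials, together with the gap structure of their heights), so that the bounds in \eqref{eq:bound2} can be applied with integer-valued sharpness rather than up to $O(1)$ slack.
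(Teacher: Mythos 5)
The first half of your proposed transference, $\widehat{w}_{n}^{\ast}(\zeta)\geq 1/\lambda_{n}(\zeta)$, is exactly \eqref{eq:besides} (due to Davenport--Schmidt), and this is indeed what the paper uses to get all the \emph{lower} bounds on $\widehat{w}_{n}^{\ast}(\zeta)$ once Theorem~\ref{nabum} is in place. That part of your plan is sound.

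The gap is the proposed matching upper bound $\widehat{w}_{n}^{\ast}(\zeta)\leq 1/\lambda_{n}(\zeta)+c_{2}$. No such bound is known, and the argument you sketch for it does not work: the powers $\alpha^{i}$ of a degree-$\leq n$ algebraic approximant $\alpha$ are not rational, so ``clearing denominators inside the $\alpha^{i}$'' does not produce an integer vector $(x,y_{1},\ldots,y_{n})$, and a good algebraic (equivalently polynomial) approximation of $\zeta$ controls the \emph{last} successive minimum on the simultaneous side via Mahler duality, not the first minimum $\lambda_{n}(\zeta)=\lambda_{n,1}(\zeta)$. These are genuinely different quantities, and there is no $O(1)$-tight transference between them. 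The paper even contains a remark (around \eqref{eq:najo}) observing that the available transference inequalities linking $\widehat{w}_{n}^{\ast}(\zeta)$ to classical exponents are insufficient to conclude $\lim_{n}\widehat{w}_{n}^{\ast}(\zeta)=\infty$ from the weak hypothesis of being a non-$U$-number; your proposed two-sided bound would contradict the spirit of that remark.

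The paper's actual route to the upper bounds is of a different nature and does not pass through $\lambda_{n}(\zeta)$ at all. It uses two results from~\cite{bschlei} that are specific to the Veronese curve: (a) any $U_{m}$-number satisfies $\widehat{w}_{n}^{\ast}(\zeta)\leq m$ for all $n\geq 1$, which gives the closed interval $[m-1,m]$ in the $U_{m}$-case once \eqref{eq:besides} and Theorem~\ref{nabum} supply $\widehat{w}_{n}^{\ast}(\zeta)\geq m-1$; and (b) the inequality \eqref{eq:traene}, which bounds $\widehat{w}_{n}^{\ast}(\zeta)$ in terms of $w_{m}(\zeta)$ for a \emph{smaller} index $m$, whenever $w_{m}(\zeta)>m+n-1$. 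For a $T$-number one picks $m$ with $w_{m}(\zeta)\geq N^{2}m$, sets $n=Nm$, and \eqref{eq:traene} with \eqref{eq:bound2} gives $\widehat{w}_{Nm}^{\ast}(\zeta)\leq 3m$, i.e.\ $\widehat{w}_{n}^{\ast}(\zeta)/n\leq 3/N\to 0$, establishing $\liminf\widehat{w}_{n}^{\ast}(\zeta)/n=0$. This exploitation of a \emph{large} $w_{m}(\zeta)$ at a small degree to cap $\widehat{w}_{n}^{\ast}(\zeta)$ at a much larger degree is precisely the mechanism your plan is missing; it cannot be replaced by any $O(1)$-additive comparison with $\lambda_{n}(\zeta)$.
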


\begin{remark} \label{reeh}
Several variants of equivalence principle II can be derived similarly.
For example one can fix the degree of the algebraic numbers
in \eqref{eq:stern} equal to $n$,
or restrict to approximation by algebraic integers or algebraic units.
See for example~\cite{teu},~\cite{davsh}, or~\cite{ichfix}.
\end{remark}

Define the quantities
\[
\underline{\widehat{w}}^{\ast}(\zeta) = 
\liminf_{n\to\infty} \frac{\widehat{w}_{n}^{\ast}(\zeta)}{n}, \qquad\qquad \overline{\widehat{w}}^{\ast}(\zeta) = 
\limsup_{n\to\infty} \frac{\widehat{w}_{n}^{\ast}(\zeta)}{n},
\]
and further let
\[
\theta(\zeta)=\liminf_{n\to\infty} 
\frac{\log \widehat{w}_{n}^{\ast}(\zeta)}{\log n}.
\]
By \eqref{eq:bound2}
we have $0\leq \underline{\widehat{w}}^{\ast}(\zeta)\leq
\overline{\widehat{w}}^{\ast}(\zeta)\leq 2$ and $\theta(\zeta)\in[0,1]$.
An effective version of the second equivalence principle reads as follows.

\begin{theorem} \label{difizil}
Let $\zeta$ be any transcendental real number. We have
\begin{equation} \label{eq:hum}
\frac{1}{\overline{w}(\zeta)+2}\leq 
\underline{\widehat{w}}^{\ast}(\zeta)
\leq \min\left\{ \underline{w}(\zeta),
\frac{4}{\overline{w}(\zeta)}\right\},
\end{equation}
and
\begin{equation} \label{eq:bug}
\frac{1}{\underline{w}(\zeta)+2}\leq \overline{\widehat{w}}^{\ast}(\zeta)
\leq \min\left\{ \overline{w}(\zeta),
\frac{4}{\underline{w}(\zeta)}\right\}.
\end{equation}
Moreover
\begin{equation} \label{eq:hadschibratschi}
\theta(\zeta)=\frac{1}{\tau(\zeta)}=-\sigma(\zeta).
\end{equation}
\end{theorem}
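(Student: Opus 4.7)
The plan is to split the theorem into three blocks (upper bounds, lower bounds, asymptotic identity) and reduce each to a combination of (i) the classical estimates (\ref{eq:duzu})--(\ref{eq:bound2}) linking $\widehat{w}_{n}^{\ast}$ with $w_{n}$, and (ii) the quantitative equivalence Theorem~\ref{genau} relating $\overline{w},\underline{w}$ to $\overline{\lambda},\underline{\lambda}$, glued together by a transference-type comparison of $\widehat{w}_{n}^{\ast}(\zeta)$ and $1/\lambda_{n}(\zeta)$. Note that the second equality in (\ref{eq:hadschibratschi}) is already contained in Theorem~\ref{genau}, so only $\theta(\zeta)=1/\tau(\zeta)$ requires new work.

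For the easy upper bounds $\underline{\widehat{w}}^{\ast}(\zeta)\leq \underline{w}(\zeta)$ and $\overline{\widehat{w}}^{\ast}(\zeta)\leq \overline{w}(\zeta)$ in (\ref{eq:hum}) and (\ref{eq:bug}), one simply divides $\widehat{w}_{n}^{\ast}(\zeta)\leq \widehat{w}_{n}(\zeta)\leq w_{n}(\zeta)$ from (\ref{eq:bound2}) by $n$ and takes liminf and limsup respectively. For the sharper inverse bounds $\underline{\widehat{w}}^{\ast}\leq 4/\overline{w}$ and $\overline{\widehat{w}}^{\ast}\leq 4/\underline{w}$, the plan is to establish a Khintchine-style dual transference yielding $\widehat{w}_{n}^{\ast}(\zeta)\leq 1/\lambda_{n}(\zeta)$ (up to a harmless constant) for every $n$, and to invoke the left halves of (\ref{eq:leftie}), which roughly force $n\lambda_{n}(\zeta)\gtrsim \overline{w}(\zeta)/4$ (resp.\ $\underline{w}(\zeta)/4$) along suitable subsequences. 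Passing to the appropriate outer limit in $\widehat{w}_{n}^{\ast}/n\leq 1/(n\lambda_{n})$ then yields both claims. Symmetrically, the lower bounds $\underline{\widehat{w}}^{\ast}\geq 1/(\overline{w}+2)$ and $\overline{\widehat{w}}^{\ast}\geq 1/(\underline{w}+2)$ follow from the converse transference $\widehat{w}_{n}^{\ast}(\zeta)\geq 1/\lambda_{n}(\zeta)$ combined with the right halves of (\ref{eq:leftie}), $\overline{\lambda}\leq \overline{w}+2$ and $\underline{\lambda}\leq \underline{w}+2$.

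Granted the reciprocal comparison $\widehat{w}_{n}^{\ast}(\zeta)\asymp 1/\lambda_{n}(\zeta)$ on a logarithmic scale, the identity (\ref{eq:hadschibratschi}) is immediate: it gives $\log\widehat{w}_{n}^{\ast}(\zeta)/\log n + \log\lambda_{n}(\zeta)/\log n \to 0$ with $n$, so
\[
\theta(\zeta)
=\liminf_{n\to\infty}\frac{\log\widehat{w}_{n}^{\ast}(\zeta)}{\log n}
=-\limsup_{n\to\infty}\frac{\log\lambda_{n}(\zeta)}{\log n}
=-\sigma(\zeta)=\frac{1}{\tau(\zeta)},
\]
the last equality by Theorem~\ref{genau}. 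The main obstacle is certainly the pair of transference inequalities $\widehat{w}_{n}^{\ast}(\zeta)\asymp 1/\lambda_{n}(\zeta)$. The direction $\widehat{w}_{n}^{\ast}\geq 1/\lambda_{n}$ should be accessible by promoting a simultaneous rational approximation $(y_{1}/x,\ldots,y_{n}/x)$ to $(\zeta,\ldots,\zeta^{n})$ into an integer polynomial of degree $\leq n$ taking a small value at $\zeta$ (via a Vandermonde/elimination argument), then extracting a root near $\zeta$ of the right height; the reverse inequality $\widehat{w}_{n}^{\ast}\leq c/\lambda_{n}$ is the trickier one and I would attack it through a Minkowski dual-lattice argument in the spirit of Bugeaud--Laurent's refinements of (\ref{eq:khintchine}), combined with the ingredients already used in the proof of Theorem~\ref{nabum}.
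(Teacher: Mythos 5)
Your plan is sound for the easy parts (the trivial upper bounds $\underline{\widehat{w}}^{\ast}\leq\underline{w}$, $\overline{\widehat{w}}^{\ast}\leq\overline{w}$ via \eqref{eq:bound2}, the lower bounds via \eqref{eq:besides} plus Theorem~\ref{genau}, and $\theta(\zeta)\geq 1/\tau(\zeta)$ the same way), and that part matches the paper. The gap is the central tool you invoke for everything else, namely a reverse transference of the form $\widehat{w}_{n}^{\ast}(\zeta)\leq c/\lambda_{n}(\zeta)$. This is not a known inequality and is in general false: for a Liouville number one has $\lambda_{n}(\zeta)=\infty$ for all $n$ while $\widehat{w}_{n}^{\ast}(\zeta)\geq 1$, so the product $\widehat{w}_{n}^{\ast}(\zeta)\lambda_{n}(\zeta)$ is unbounded. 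Even restricted to the cases where the theorem is nontrivial there is no evidence that the product $\widehat{w}_{n}^{\ast}(\zeta)\lambda_{n}(\zeta)$ is uniformly bounded; the only available inequality, $\widehat{w}_{n}^{\ast}(\zeta)\lambda_{n}(\zeta)\geq 1$ from \eqref{eq:besides}, goes in the opposite direction. A Minkowski/Mahler duality argument also will not produce it: duality in the sense of \eqref{eq:jaja} identifies $\lambda_{n}(\zeta)$ with $1/\widehat{w}_{n,n+1}(\zeta)$ (the \emph{last} uniform minimum), not with $1/\widehat{w}_{n}(\zeta)=1/\widehat{w}_{n,1}(\zeta)$, and the spread $\widehat{w}_{n,1}(\zeta)-\widehat{w}_{n,n+1}(\zeta)$ is not bounded in general. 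Without this transference the sharper upper bounds $\underline{\widehat{w}}^{\ast}\leq 4/\overline{w}$, $\overline{\widehat{w}}^{\ast}\leq 4/\underline{w}$ and the upper bound $\theta(\zeta)\leq 1/\tau(\zeta)$ remain unproved.

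The paper avoids any $\lambda_{n}$ comparison here and instead proves these upper bounds purely within the polynomial/algebraic side, using the inequality \eqref{eq:traene} from \cite{bschlei}: whenever $w_{m}(\zeta)>m+n-1$, one has $\widehat{w}_{n}^{\ast}(\zeta)\leq m+(n-1)\widehat{w}_{n}^{\ast}(\zeta)/w_{m}(\zeta)$, equivalently $\widehat{w}_{n}^{\ast}(\zeta)\leq m\,w_{m}(\zeta)/(w_{m}(\zeta)-n+1)$. Choosing $n\approx m\alpha/2$ when $w_{m}(\zeta)/m>\alpha>2$ and optimizing the ratio $\beta=n/m$ yields $\widehat{w}_{n}^{\ast}(\zeta)/n\leq 4/\alpha+O(1/m)$, from which both $4/\overline{w}$ and $4/\underline{w}$ follow by letting $\alpha$ approach $\overline{w}$ (along a subsequence) and $\underline{w}$ (eventually), respectively; the same inequality with $n=m^{\gamma-\epsilon}$ gives $\theta(\zeta)\leq 1/\tau(\zeta)$. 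So the genuinely new content of the proof is a degree-comparison estimate inside the scale of $w$-exponents, not a transference between $\widehat{w}^{\ast}$ and $\lambda$, and that is the ingredient your proposal is missing.
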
 

Apparently for large
values of $\overline{w}(\zeta)$ and $\underline{w}(\zeta)$, 
the respective lower and upper bound differ roughly by the same
factor $4$ as in Theorem~\ref{genau}.
This is surprising as the proofs of upper bounds in 
Theorem~\ref{difizil} is unrelated to the proof of
Theorem~\ref{genau}. 
It is hard to predict if this factor $4$ has any deeper meaning.
Note that for the similarly defined 
quantities $\underline{w}^{\ast}(\zeta), \overline{w}^{\ast}(\zeta)$,
Wirsing's~\cite{wirsing}
estimate $w_{n}^{\ast}(\zeta)\geq (w_{n}(\zeta)+1)/2\geq (n+1)/2$
and \eqref{eq:duzu} imply
\[
\frac{1}{2}\leq \max\left\{\frac{\overline{w}(\zeta)}{2},\overline{w}(\zeta)-1\right\}
\leq 
\overline{w}^{\ast}(\zeta)\leq \overline{w}(\zeta),
\]
and
\[
\frac{1}{2}\leq \max\left\{\frac{\underline{w}(\zeta)}{2},\underline{w}(\zeta)-1\right\}\leq \underline{w}^{\ast}(\zeta)\leq \underline{w}(\zeta).
\]

Thus
$\tau(\zeta)$ equals the order $\tau^{\ast}(\zeta)$
obtained by replacing $w_{n}(\zeta)$ by $w_{n}^{\ast}(\zeta)$.
Hence a variant of Theorem~\ref{difizil}, in terms of 
quantities derived from
$w_{n}^{\ast}(\zeta)$ and $\widehat{w}_{n}^{\ast}(\zeta)$ only,
can be formulated. We do not explicitly state it.
 
Similar to Corollary~\ref{lamkor}, we can ask for the set 
$\mathscr{W}$ of limits 
of the sequences $(\widehat{w}_{n}^{\ast}(\zeta))_{n\geq 1}$
as $\zeta$ attains every real number.
We conjecture that $\mathscr{W}=\{ \infty\}\cup \{1,2,3,\ldots\}$. 
However, Theorem~\ref{dadkor} only admits the inclusion 
$\mathscr{W}\supseteq \{1,\infty\}$,
and conversely we cannot even exclude $\mathscr{W}=[1,\infty]$.

\subsection{Comments and outline of the following sections}
\label{brief}
We recapitulate that in Section~\ref{eqprin} we derived four equivalent
definitions of the Mahler classification in terms of the
sequences $(w_{n}(\zeta))_{n\geq 1}, (\lambda_{n}(\zeta))_{n\geq 1},
(w_{n}^{\ast}(\zeta))_{n\geq 1}$ and
$(\widehat{w}_{n}^{\ast}(\zeta))_{n\geq 1}$ respectively. It is natural
to ask if the sequences $(\widehat{w}_{n}(\zeta))_{n\geq 1}$
and $(\widehat{\lambda}_{n}(\zeta))_{n\geq 1}$ can be somehow
included in the picture. However, almost all $S$-numbers 
satisfy $\widehat{w}_{n}(\zeta)=n$
and $\widehat{\lambda}_{n}(\zeta)=1/n$ for all $n\geq 1$
by Sprind\v{z}uk~\cite{sprindzuk}, and any Liouville number
(i.e. a $U_{1}$-number) shares the same property 
by~\cite[Corollary~5.2]{schlei}.
Hence it seems not to be possible. We also want to 
point out that although Theorem~\ref{nabum} and 
Corollary~\ref{lamkor}
provide much new information on the exponents $\lambda_{n}(\zeta)$,
they are insufficient when it comes to addressing certain more subtle questions on
the decay of the sequences $(\lambda_{n}(\zeta))_{n\geq 1}$
within the interval $(0,1)$. For 
example~\cite[Problem~1.11]{schlei} remains open,
asking if the estimate 
\[
\lambda_{m}(\zeta)\geq \frac{n\lambda_{n}(\zeta)-m+n}{m}
\]
holds for any integers $m\geq n\geq 1$ and any
real number $\zeta$. The answer is affirmative when $n$ divides $m$,
see~\cite[Lemma~1]{bug}, or when $\lambda_{m}(\zeta)>1$ even 
with equality~\cite[Corollary~1.10]{schlei}. See also
Section~\ref{spectrum} below.

We give a brief outline of the upcoming sections.
In Section~\ref{refin} below we establish several
partial results, which combine to Theorem~\ref{nabum} and 
Theorem~\ref{genau}. 
These partial results have additional
interesting consequences on their own,
gathered in Section~\ref{metric}. There
we refine the upper
bound for the uniform exponents $\widehat{\lambda}_{n}(\zeta)$ for
even $n$. Furthermore we study the consequences of the equivalence principle to 
the metric problem of determining the Hausdorff dimension of vectors
on the Veronese curve that are simultaneously 
approximable to a given order. Moreover,
for numbers $\zeta$ that admit many very small 
evaluations at integer polynomials of bounded degree,
we provide a rate of decay for 
the exponents $\lambda_{n}(\zeta)$ for large $n$. Suitable
numbers include the Champerowne number and any number with
the property $\widehat{w}_{n}(\zeta)>n$ for some $n\geq 2$.
The proofs, unless reasonably short, are carried
out in Section~\ref{profs}.

\section{Refinements of the equivalence principle} \label{refin}

Theorem~\ref{nabum} will be an immediate consequence of 
Theorem~\ref{juchu}, 
Theorem~\ref{unumber} and Theorem~\ref{tnumber} 
formulated below in this section. 

\subsection{Upper bounds for $\lambda_{n}$} \label{b}

The upper bounds in Theorem~\ref{nabum} and Theorem~\ref{genau}
are a consequence of the following very general Theorem~\ref{juchu}.
We agree on $w_{0}(\zeta)=0$.

\begin{theorem} \label{juchu}
Let $n\geq 1$ be an integer and $\zeta$ a transcendental real number.
Assume $w_{n}(\zeta)<\infty$. 
Then we have 
\begin{equation} \label{eq:vor1}
\lambda_{N}(\zeta)\leq \max\left\{ \frac{1}{\widehat{w}_{n}(\zeta)},\frac{1}{\widehat{w}_{N-n+1}(\zeta)-w_{n}(\zeta)} \right\}, 
\qquad\; N\geq  \lceil w_{n}(\zeta)\rceil+n-1.
\end{equation}
Moreover, in the case of $w_{n}(\zeta)<2n+1$ we have
\begin{equation} \label{eq:vor4}
\lambda_{N}(\zeta)\leq \max\left\{ \frac{1}{\widehat{w}_{n}(\zeta)},\frac{1}{\widehat{w}_{N-n+1}(\zeta)-w_{N-2n}(\zeta)} \right\}, 
\qquad \lfloor w_{n}(\zeta)\rfloor+n\leq N\leq 3n.
\end{equation}
\end{theorem}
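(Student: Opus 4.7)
My plan is to establish Theorem~\ref{juchu} through the classical ``convolution'' construction linking polynomial and simultaneous approximation. Setting $y_0 := x$, for any polynomial $P(T) = \sum_{j=0}^n a_j T^j \in \mathbb{Z}[T]$ of degree at most $n$ and any integer vector $(x, y_1, \dots, y_N)$ with $|x| \leq X$ and $\max_{1 \leq i \leq N} |\zeta^i x - y_i| \leq X^{-\lambda + \varepsilon}$, I would form the integers
\[
z_i := \sum_{j=0}^n a_j y_{i+j} \in \mathbb{Z}, \qquad i = 0, 1, \dots, N - n,
\]
and derive by direct expansion the fundamental identity
\[
\bigl| z_i - x\zeta^i P(\zeta) \bigr| \leq (n+1)\, H(P)\, X^{-\lambda + \varepsilon}.
\]

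If $\lambda_N(\zeta) \leq 1/\widehat{w}_n(\zeta)$ the first term of the maximum in \eqref{eq:vor1} gives the bound at once, so I may assume $\lambda := \lambda_N(\zeta) > 1/\widehat{w}_n(\zeta)$. Along a sequence of scales $X \to \infty$ realizing $\lambda$ up to $\varepsilon$, I would invoke the definition of $\widehat{w}_n(\zeta)$ to supply, for any $H$ in the non-empty range $[X^{1/\widehat{w}_n + \varepsilon}, X^{\lambda - \varepsilon}]$, a polynomial $P$ of degree $\leq n$ and height $\leq H$ with $|P(\zeta)| \leq H^{-\widehat{w}_n + \varepsilon}$. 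Both terms controlling $|z_i|$ then drop below $1/2$, forcing $z_i = 0$ for all $i = 0, \dots, N-n$; this encodes a linear recurrence of order $n$ satisfied by $(y_0, \dots, y_N)$.

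For the second, more delicate bound $\lambda \leq 1/(\widehat{w}_{N-n+1}(\zeta) - w_n(\zeta))$, I would repeat the convolution with an auxiliary polynomial $P^*$ of degree $\leq n$ and height $H^*$ satisfying the stronger condition $|P^*(\zeta)| \leq (H^*)^{-w_n + \varepsilon}$, which exists for infinitely many $H^*$ by the definition of $w_n(\zeta)$. The resulting integers $(z_0^*, \dots, z_{N-n}^*)$ form a new simultaneous approximation to $(\zeta, \zeta^2, \dots, \zeta^{N-n})$ with anchor $|z_0^*|$ bounded by $X(H^*)^{-w_n + \varepsilon} + (n+1) H^* X^{-\lambda + \varepsilon}$ and coordinate error of order $H^* X^{-\lambda + \varepsilon}$; the hypothesis $N \geq \lceil w_n \rceil + n - 1$ ensures this new approximation is non-degenerate. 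I would then couple it with the uniform exponent $\widehat{w}_{N-n+1}(\zeta)$ via a polynomial $S$ of degree $\leq N-n+1$ and height $Y$ with $|S(\zeta)| \leq Y^{-\widehat{w}_{N-n+1} + \varepsilon}$, paired appropriately with the $z_j^*$ (or, equivalently, with $(y_0, \dots, y_{N-n+1})$, using the recurrence from the first step to rewrite the pairing and eliminate the missing index). Optimizing the scales $(H^*, Y)$ should yield exactly $\lambda(\widehat{w}_{N-n+1} - w_n) \leq 1$, which is the claimed bound.

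The refinement \eqref{eq:vor4} would follow by the same scheme: under $w_n(\zeta) < 2n + 1$ and $\lfloor w_n \rfloor + n \leq N \leq 3n$, the integer $N - 2n$ lies in $[0, n]$, and one may replace $P^*$ by a polynomial $P'$ of degree $\leq N - 2n$ and height $H'$ with $|P'(\zeta)| \leq (H')^{-w_{N-2n} + \varepsilon}$, which since $w_{N-2n} \leq w_n$ produces the sharper denominator $\widehat{w}_{N-n+1} - w_{N-2n}$; the condition $N \leq 3n$ ensures the index ranges in the convolution remain admissible. The main obstacle I anticipate is the scale matching in the third paragraph: one must verify that the optimal choice of $(H^*, Y)$ produces precisely the difference $\widehat{w}_{N-n+1} - w_n$ in the denominator rather than a worse combination such as $\widehat{w}_{N-n+1} - \widehat{w}_n$. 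This bookkeeping is most transparent within the parametric geometry of numbers framework of Schmidt--Summerer, where the exponents correspond to slopes of piecewise-linear functions and the convolution operation becomes a known linear substitution on the successive-minima diagram.
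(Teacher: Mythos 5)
Your proposal takes a genuinely different route from the paper's. The paper works \emph{dually}: it invokes Mahler's relation $\lambda_{N}(\zeta)=1/\widehat{w}_{N,N+1}(\zeta)$ and obtains the bound by lower-bounding the last uniform successive minimum. Concretely, it takes the uniform best polynomial $P_X$ of degree at most $n$ and the uniform best $Q_X$ of degree at most $k=N-n+1$, factors $Q_X=R_X S_X$ with $R_X$ collecting the irreducible factors shared with $P_X$, uses Gelfond's lemma and the definition of $w_n(\zeta)$ to show $|R_X(\zeta)|$ cannot be too small, and then shifts the coprime pair $(P_X,S_X)$ by powers of $T$ to produce $N+1$ linearly independent polynomial solutions of degree at most $N$. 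Your ``convolution'' argument instead works primally, collapsing the approximation vector $(y_0,\dots,y_N)$ by pairing with low-degree polynomials. This is a legitimate and classical technique, and it is nice to see it set up cleanly, but as written your proposal is a plan rather than a proof, and the places where it stops short are exactly the places where the paper does its work.

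The gaps are threefold. First, the scale optimization that is supposed to ``yield exactly $\lambda(\widehat{w}_{N-n+1}-w_n)\leq 1$'' is never carried out; you flag it yourself as the main obstacle, and this is the entire content of the second bound. Second, there is an index mismatch: the convolution of $(y_0,\dots,y_N)$ with a degree-$n$ polynomial $P^*$ produces a vector $(z_0^*,\dots,z_{N-n}^*)$ of length $N-n+1$, which naturally pairs with a polynomial of degree at most $N-n$, not $N-n+1$. Your proposed fix --- ``using the recurrence from the first step to rewrite the pairing and eliminate the missing index'' --- invokes the recurrence from the uniform polynomial $P$, whose coefficients are unrelated to those of $P^*$, so it is not clear this repairs anything. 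Third, and most importantly, you do not address the degenerate case in which the convolved vector $z^*$ vanishes identically, i.e.\ $(y_0,\dots,y_N)$ happens to satisfy the linear recurrence given by the coefficients of $P^*$. In that case the pairing with $S$ gives zero and the argument is vacuous. This degeneracy is precisely the primal analogue of the shared-factor problem that the paper handles through the factorization $Q_X=R_X S_X$ and the lower bound on $|R_X(\zeta)|$; without an analogous device your argument can fail silently. In short, the strategy is reasonable and genuinely distinct from the paper's, but the proposal does not yet establish the theorem.
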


We see that in case of $w_{n}(\zeta)<2n$, for $N<3n$ the bound
\eqref{eq:vor4} is possibly stronger than \eqref{eq:vor1} because of
the smaller index in the right expression. The case $w_{n}(\zeta)<n+1$
and $N=2n$ in \eqref{eq:vor4} will play a crucial role
for improving the upper bounds 
for the exponents $\widehat{\lambda}_{2n}(\zeta)$
in Section~\ref{getupper}. The estimate \eqref{eq:vor1} with
a suitable choice of $N$ yields the desired implications
for the equivalence principle.

\begin{corollary}  \label{juchurra}
Let $n\geq 1$ be an integer and $\zeta$ a transcendental real number
and assume $w_{n}(\zeta)<\infty$. Then
\begin{equation} \label{eq:vor2}
\lambda_{N}(\zeta)\leq \frac{1}{n}, \qquad \qquad N\geq \lceil w_{n}(\zeta)\rceil+2n-1.
\end{equation}
Thus, if $\zeta$ is not a $U$-number then $\lim_{n\to\infty} \lambda_{n}(\zeta)=0$, and if
$\zeta$ is an $S$-number then 
$\overline{\lambda}(\zeta)=\limsup_{n\to\infty} n\lambda_{n}(\zeta)<\infty$, and more precisely
\begin{equation} \label{eq:juchur}
\overline{\lambda}(\zeta) \leq \overline{w}(\zeta)+2, \qquad \underline{\lambda}(\zeta) \leq \underline{w}(\zeta)+2.
\end{equation}
\end{corollary}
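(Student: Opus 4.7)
The plan is to deduce Corollary \ref{juchurra} from Theorem \ref{juchu} by specializing the parameters and then optimizing the dependence between $n$ and $N$.

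First I would establish \eqref{eq:vor2}. Starting from \eqref{eq:vor1}, I use the trivial lower bound $\widehat{w}_n(\zeta) \geq n$ from \eqref{eq:wmono} to control the first term in the maximum, and note that the same bound applied to the index $N-n+1$ yields $\widehat{w}_{N-n+1}(\zeta) \geq N-n+1$. To control the second term one needs $\widehat{w}_{N-n+1}(\zeta) - w_n(\zeta) \geq n$, which will hold as soon as $N - n + 1 \geq n + w_n(\zeta)$, i.e.\ $N \geq 2n - 1 + w_n(\zeta)$. Since $\widehat{w}_{N-n+1}(\zeta)$ is a real (not necessarily integer) quantity but $N$ is an integer, the condition $N \geq \lceil w_n(\zeta)\rceil + 2n - 1$ is sufficient. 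This range is also inside the range of validity $N \geq \lceil w_n(\zeta)\rceil + n - 1$ of Theorem \ref{juchu}, so both terms in the maximum are at most $1/n$ and \eqref{eq:vor2} follows.

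Next, to obtain $\lim_{N\to\infty} \lambda_N(\zeta) = 0$ when $\zeta$ is not a $U$-number, I would fix any $n$; then $w_n(\zeta) < \infty$, and \eqref{eq:vor2} combined with the monotonicity \eqref{eq:reihenfolge} of $(\lambda_N)_{N\geq 1}$ implies $\limsup_{N\to\infty}\lambda_N(\zeta) \leq 1/n$. Letting $n \to \infty$ gives the conclusion.

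For the more precise estimates \eqref{eq:juchur} when $\zeta$ is an $S$-number, the idea is to choose $n$ as a function of $N$ so that the hypothesis of \eqref{eq:vor2} is just barely satisfied. Given $\varepsilon > 0$, by definition of $\overline{w}(\zeta)$ we have $w_n(\zeta) \leq (\overline{w}(\zeta)+\varepsilon)n$ for all $n \geq n_0$. Given $N$ large, I would set $n$ to be the largest integer with $(\overline{w}(\zeta)+\varepsilon+2)n \leq N$, so that $\lceil w_n(\zeta)\rceil + 2n - 1 \leq N$ and hence $\lambda_N(\zeta) \leq 1/n$. Since $n \geq N/(\overline{w}(\zeta)+\varepsilon+2) - 1$, one gets $N\lambda_N(\zeta) \leq \overline{w}(\zeta)+\varepsilon+2 + o(1)$ as $N\to\infty$, and letting $\varepsilon\to 0$ yields $\overline{\lambda}(\zeta)\leq \overline{w}(\zeta)+2$. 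For the liminf bound, I instead pick an infinite sequence of indices $n_k$ along which $w_{n_k}(\zeta)/n_k \to \underline{w}(\zeta)$, set $N_k = \lceil w_{n_k}(\zeta)\rceil + 2n_k - 1$, and use $N_k\lambda_{N_k}(\zeta) \leq N_k/n_k \to \underline{w}(\zeta)+2$ to conclude $\underline{\lambda}(\zeta)\leq \underline{w}(\zeta)+2$.

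There is no substantial obstacle here; the corollary is essentially an elementary manipulation of Theorem \ref{juchu}, and the only mild care needed is in aligning the integer part conditions on $N$ and in checking that the $n$'s chosen as functions of $N$ tend to infinity so that the relevant inequalities become applicable.
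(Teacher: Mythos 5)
Your proposal is correct and follows essentially the same route as the paper: derive \eqref{eq:vor2} from \eqref{eq:vor1} via the trivial lower bounds $\widehat{w}_n(\zeta)\geq n$ and $\widehat{w}_{N-n+1}(\zeta)\geq N-n+1$, then obtain \eqref{eq:juchur} by choosing $n$ roughly proportional to $N$ (the paper takes $n=\lceil N/(\overline{w}(\zeta)+2+\epsilon)\rceil$, you take the largest $n$ with $(\overline{w}(\zeta)+\epsilon+2)n\leq N$, which amounts to the same thing). One small point in your favour: your treatment of the $\underline{\lambda}$ bound is actually more careful than the paper's, since you correctly pick a subsequence $n_k$ realizing $\underline{w}(\zeta)=\liminf w_n/n$ and then define $N_k$ in terms of it, whereas the paper's phrasing (choosing $n$ as a function of arbitrary large $N$) glosses over the fact that the bound $w_n(\zeta)\leq(\underline{w}(\zeta)+\epsilon)n$ only holds along a subsequence.
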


\begin{proof}
In view of \eqref{eq:wmono}, as soon as $N\geq w_{n}(\zeta)+2n-1$
the right hand side
in \eqref{eq:vor1} can be estimated above by
\[
\max\left\{ \frac{1}{\widehat{w}_{n}(\zeta)},\frac{1}{\widehat{w}_{N-n+1}(\zeta)-w_{n}(\zeta)} \right\} \leq
\max \left\{ \frac{1}{n},\frac{1}{N-n+1-w_{n}(\zeta)}\right\rbrace 
=\frac{1}{n}.
\]
Hence \eqref{eq:vor2} follows.
The claim \eqref{eq:juchur} follows by reversing the argument.
For $\epsilon>0$ and large $N$, choose 
$n=\lceil N/(\overline{w}(\zeta)+2+\epsilon)\rceil$
and $n=\lceil N/(\underline{w}(\zeta)+2+\epsilon)\rceil$, respectively.
The condition in \eqref{eq:vor2} is satisfied and 
we obtain 
$\lambda_{N}(\zeta)\leq 1/n= (\overline{w}(\zeta)+2)/N+\varepsilon_{N}$
and 
$\lambda_{N}(\zeta)\leq 1/n= (\underline{w}(\zeta)+2)/N+\varepsilon_{N}$,
respectively, where $\varepsilon_{N}$ tends to $0$ $\epsilon$ does
and $N\to\infty$. It suffices to let $\epsilon\to 0$.
\end{proof}

If $w_{n}(\zeta)$ is not too large (considerably smaller than $2n$)
and for small $t$ the values $w_{t}(\zeta)$ 
do not exceed $t$ by much, then \eqref{eq:vor4}
yields smaller $N$ for the conclusion in \eqref{eq:vor2}.
On the other hand, the bound $1/n$ in \eqref{eq:vor2}
in general cannot be improved for any $N$,
as follows from Theorem~\ref{nabum} by taking $\zeta$ a $U_{n+1}$-number.

\subsection{Lower bounds for $\lambda_{n}$} \label{a}

To formulate the results of this section in full extent, 
we need to define successive minima exponents that refine
the classical exponents $w_{n}(\zeta)$ and $\lambda_{n}(\zeta)$. 
For $1\leq j\leq n+1$, let $\lambda_{n,j}(\zeta)$ 
and $\widehat{\lambda}_{n,j}(\zeta)$
be the supremum of $\lambda$ for 
which \eqref{eq:lambda} has $j$ linearly independent integer vector solutions for arbitrarily large $X$ and all large $X$, respectively.
Similarly, let $w_{n,j}(\zeta)$ and $\widehat{w}_{n,j}(\zeta)$ be the supremum of $w$ for which \eqref{eq:w} has
$j$ linearly independent polynomial solutions for arbitrarily large and all large $X$, respectively. 
Obviously, for $j=1$ we recover the corresponding classical exponents, 
and the relations
\begin{align*}
\lambda_{n,1}(\zeta)&\geq \lambda_{n,2}(\zeta)\geq \cdots\geq \lambda_{n,n+1}(\zeta),\qquad 
\widehat{\lambda}_{n,1}(\zeta)\geq \widehat{\lambda}_{n,2}(\zeta)\geq \cdots\geq \widehat{\lambda}_{n,n+1}(\zeta),  \\
w_{n,1}(\zeta)&\geq w_{n,2}(\zeta)\geq \cdots\geq w_{n,n+1}(\zeta),\qquad 
\widehat{w}_{n,1}(\zeta)\geq \widehat{w}_{n,2}(\zeta)\geq \cdots\geq \widehat{w}_{n,n+1}(\zeta),
\end{align*}
hold.

\begin{theorem}  \label{unumber}
Let $m\geq 2$ be an integer and $\zeta$ be a $U_{m}$-number. Then 
\begin{equation}  \label{eq:neuheit}
\lambda_{n}(\zeta)\geq \frac{1}{m-1}, \qquad n\geq 1.
\end{equation}
If and only if additionally $w_{m-1}(\zeta)=m-1$ holds, then 
\begin{equation} \label{eq:neueun}
\lambda_{n}(\zeta)=\lambda_{n,2}(\zeta)=\cdots=\lambda_{n,m}(\zeta)=\frac{1}{m-1}, \qquad n\geq m-1.
\end{equation}
If and only if moreover $w_{1}(\zeta)=w\in[1,2]$ and
$w_{t}=t$ for any $2\leq t\leq m-1$, then the sequence $(\lambda_{n}(\zeta))_{n\geq 1}$ is given by
\begin{equation} \label{eq:zzz}
\left(w,\frac{1}{2},\frac{1}{3},\ldots,\frac{1}{m-2},\frac{1}{m-1},\frac{1}{m-1},\frac{1}{m-1},\cdots\right).
\end{equation}
\end{theorem}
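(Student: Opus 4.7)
The plan is to prove the three successive assertions by combining an algebraic-approximation construction (for the lower bounds on $\lambda_n$) with Khintchine's transference principle \eqref{eq:khintchine} (for the matching upper bounds and converses).

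For the first assertion, since $\zeta$ is a $U_m$-number, I would fix a sequence of integer polynomials $P_k$ with $\deg P_k\leq m$, $H(P_k)\to\infty$, and $|P_k(\zeta)|\leq H(P_k)^{-w_k}$ with $w_k\to\infty$. The hypothesis $w_{m-1}(\zeta)<\infty$ forces $\deg P_k=m$ for large $k$, and the root $\alpha_k$ of $P_k$ closest to $\zeta$ satisfies a standard gap estimate $|\zeta-\alpha_k|\ll H(P_k)^{-w_k+m-1}$ and (again by $w_{m-1}(\zeta)<\infty$) has degree exactly $m$ for large $k$. Then $1,\alpha_k,\ldots,\alpha_k^{m-1}$ span an $m$-dimensional $\mathbb{Q}$-vector space in which every $\alpha_k^i$ lies, so Dirichlet's theorem in this $m$-dimensional setting produces, for every $Q\geq 1$, an integer $q\in[1,Q]$ with $\max_{1\leq i\leq n}\|q\alpha_k^i\|\ll Q^{-1/(m-1)}$. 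The trivial inequality $|q\zeta^i-q\alpha_k^i|\ll Q|\zeta-\alpha_k|$ transfers the approximation to $\zeta$ with the same exponent so long as $Q^{1+1/(m-1)}|\zeta-\alpha_k|$ stays bounded; since $w_k\to\infty$ this allows $Q$ to range over arbitrarily large values, yielding $\lambda_n(\zeta)\geq 1/(m-1)$. For $n\leq m-1$ the Dirichlet bound $\lambda_n(\zeta)\geq 1/n\geq 1/(m-1)$ is already stronger.

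For the second assertion, the upper bound $\lambda_n(\zeta)\leq 1/(m-1)$ for $n\geq m-1$ under the hypothesis $w_{m-1}(\zeta)=m-1$ follows at once from the right inequality in \eqref{eq:khintchine} at index $n=m-1$ combined with the monotonicity \eqref{eq:reihenfolge}. For the chain $\lambda_{n,1}=\cdots=\lambda_{n,m}=1/(m-1)$ I would repeat the construction of the first assertion, replacing Dirichlet by Minkowski's theorem on successive minima inside the $m$-dimensional lattice attached to $(1,\alpha_k,\ldots,\alpha_k^{m-1})$. This yields $m$ linearly independent integer solutions with exponent $\geq 1/(m-1)$; since the canonical map from this lattice into $\mathbb{Z}^{n+1}$ is of full rank $m$, linear independence survives the passage to the ambient $(n+1)$-dimensional lattice and the transfer to $\zeta$, giving $\lambda_{n,m}(\zeta)\geq 1/(m-1)$. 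Combined with $\lambda_{n,m}\leq\lambda_{n,1}\leq 1/(m-1)$ we obtain equality throughout. Conversely, the left inequality in \eqref{eq:khintchine} at $n=m-1$ shows that $w_{m-1}(\zeta)>m-1$ forces $\lambda_{m-1}(\zeta)>1/(m-1)$, contradicting the hypothesised equalities.

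For the third assertion, if in addition $w_1(\zeta)=w$ and $w_t(\zeta)=t$ for $2\leq t\leq m-1$, then Khintchine's transference is tight at each $1\leq n\leq m-2$: its right-hand side gives $\lambda_n(\zeta)\leq 1/n$ (respectively $\lambda_1(\zeta)\leq w$), matched by the Dirichlet lower bound $\lambda_n(\zeta)\geq 1/n$ (respectively by the well-known identity $\lambda_1=w_1$ for the case $n=1$); for $n\geq m-1$ the second assertion takes over. The converse inverts these Khintchine bounds: $\lambda_1(\zeta)=w$ forces $w_1(\zeta)=w$; $\lambda_t(\zeta)=1/t$ for $2\leq t\leq m-2$ forces $w_t(\zeta)\leq t$, hence $w_t(\zeta)=t$; and $\lambda_n(\zeta)=1/(m-1)$ for $n\geq m-1$ forces $w_{m-1}(\zeta)=m-1$ by the converse direction of the second assertion. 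The most delicate step will be the successive-minima claim in the second assertion, where the $m$ linearly independent approximations must be extracted from Minkowski's theorem on an auxiliary $m$-dimensional lattice and transported back to $\mathbb{Z}^{n+1}$ without either collapsing the linear independence or inflating the approximation exponent; a secondary technical care point is verifying that $\alpha_k$ genuinely has degree $m$ for large $k$ via the interplay of the gap estimate with the hypothesis $w_{m-1}(\zeta)<\infty$.
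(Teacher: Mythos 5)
Your proof takes a genuinely different route from the paper's. The paper never constructs algebraic approximants $\alpha_k$ at all: it works entirely inside the parametric geometry of numbers (Schmidt--Summerer), using the shifting observation of Proposition~\ref{wpropos} to deduce $w_{m+k,k+1}(\zeta)=\infty$ from $w_m(\zeta)=\infty$, converting this to $\underline\psi^*_{n,k+1}=-1/n$ via \eqref{eq:umrechnen2}, then applying the dual inequality \eqref{eq:duales} with $j=k+1$ together with Mahler's relations \eqref{eq:jaja} to bound $\underline\psi_{n,1}$ and finally converting back to $\lambda_n(\zeta)$ through \eqref{eq:umrechnen}; the equality cases of \eqref{eq:duales} (recorded in \eqref{eq:vieh}--\eqref{eq:varvieh}) deliver the successive-minima chain in \eqref{eq:neueun} almost for free. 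Your route is the classical one: extract algebraic numbers $\alpha_k$ of degree exactly $m$ near $\zeta$, approximate $(\alpha_k,\dots,\alpha_k^n)$ using the fact that $\mathbb{Q}(\alpha_k)$ has dimension $m$, and transfer to $\zeta$ via the gap $|\zeta-\alpha_k|$.

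The overall plan is sound, and the two lateral assertions (Khintchine at $n=m-1$ for the upper bound and the converse, $\lambda_1=w_1$ and the tightness of Khintchine for small $t$ in the third part) check out. But the central Dirichlet-type step is more delicate than you indicate, on two counts you have not addressed. First, ``Dirichlet in the $m$-dimensional setting'' does not directly control $\max_{1\le i\le n}\|q\alpha_k^i\|$ for $n\ge m$: because $\alpha_k$ need not be an algebraic integer, $\alpha_k^i$ for $i\ge m$ is only a \emph{rational} (not integral) combination of $1,\alpha_k,\dots,\alpha_k^{m-1}$, so smallness of $\|q\alpha_k^j\|$ for $j<m$ does not propagate. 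The correct argument passes to the rank-$m$ sublattice of $\mathbb{Z}^{n+1}$ cut out by the $n+1-m$ integer linear forms coming from $T^jP_k$ (which is precisely the structural fact the paper encodes in Proposition~\ref{wpropos}) and applies Minkowski's theorem there. Second, the implicit constant in the resulting estimate $\widehat\lambda_n(\alpha_k)\ge 1/(m-1)$ depends polynomially on $H(\alpha_k)\asymp H(P_k)$ via the covolume of that sublattice, and you only account for the constraint $Q^{1+1/(m-1)}|\zeta-\alpha_k|\ll 1$; you also need that the $H(P_k)$-dependent Dirichlet constant is eventually negligible, which does hold because $w_k\to\infty$, but it must be said. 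With these two points made explicit your proof of \eqref{eq:neuheit} goes through, and the Minkowski-successive-minima version of the argument (with the same caveats) gives \eqref{eq:neueun}. In exchange for being more concrete, your route requires tracking all of this bookkeeping; the paper's parametric-geometry route packages exactly these structural facts into the Schmidt--Summerer machinery and avoids any constant-tracking.
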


\begin{remark}
Any $U_{m}$-number $\zeta$
satisfies $\widehat{w}_{n}^{\ast}(\zeta)\leq m$ for all $n\geq 1$,
see~\cite[Corollary~2.5]{bschlei}.
Combining this with the well-known bound \eqref{eq:besides} below would
yield $\lambda_{n}(\zeta)\geq 1/m$ for any $U_{m}$-number $\zeta$ and 
$n\geq 1$, a weaker conclusion than \eqref{eq:neuheit}.
\end{remark}

\begin{remark}
Obviously $n=m-1$ is the smallest index for which \eqref{eq:neueun} can possibly hold by \eqref{eq:ldiri}.
Clearly \eqref{eq:neueun} extends reasonably
the claims of Theorem~\ref{nabum},
upon the strong assumption $w_{m-1}(\zeta)=m-1$.
\end{remark}

\begin{remark}
We notice that \eqref{eq:zzz} with $w=1$
coincides with the sequence $(\lambda_{n}(\zeta))_{n\geq 1}$ for any real algebraic number $\zeta$ of degree exactly $m$,
a well-known consequence of Schmidt's Subspace Theorem.
Hence we have a criterion when a real number behaves like an algebraic real number of given degree with respect to simultaneous approximation.
\end{remark}

If we agree on $1/0=\infty$ then \eqref{eq:neuheit} is true for $n=1$ as well, but has been observed in~\cite{bug}
as a consequence of \eqref{eq:herz}.
For $n\leq m$, the estimate \eqref{eq:neuheit} follows from Khintchine's 
inequality \eqref{eq:khintchine} and \eqref{eq:reihenfolge}, however
for $n>m$ the result is new.
A similar method as in the proof of Theorem~\ref{unumber} will lead to the next partial claim of Theorem~\ref{nabum}. 

\begin{theorem} \label{tnumber}
For any transcendental real $\zeta$
the quantities defined in \eqref{eq:defwoben}, \eqref{eq:defwunten} 
satisfy
\begin{equation} \label{eq:wiedefwoben}
\frac{(\overline{w}(\zeta)+1)^{2}}{4\overline{w}(\zeta)} \leq
\overline{\lambda}(\zeta), \qquad \frac{(\underline{w}(\zeta)+1)^{2}}{4\underline{w}(\zeta)} \leq
\underline{\lambda}(\zeta).
\end{equation}
In particular, any $T$-number $\zeta$ satisfies
\begin{equation} \label{eq:altheit}
\limsup_{n\to\infty} n\lambda_{n}(\zeta)=\infty.
\end{equation}
\end{theorem}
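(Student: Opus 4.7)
The plan is to establish the two lower bounds in \eqref{eq:wiedefwoben}; the assertion \eqref{eq:altheit} for $T$-numbers then follows because every $T$-number has $\overline{w}(\zeta)=\infty$ and $(w+1)^2/(4w)\to\infty$ as $w\to\infty$. The main idea is to convert polynomial data at a level $n$ --- namely the existence of $P\in\mathbb{Z}[T]$ of degree $\leq n$ and height $H$ with $|P(\zeta)|\leq H^{-w_n(\zeta)+\epsilon}$ --- into simultaneous rational approximations for $(\zeta,\zeta^2,\ldots,\zeta^N)$ at a suitably larger $N>n$, by means of Mahler's transference duality on the Veronese curve.

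Concretely, given such $P$ and an integer parameter $k\in\{1,\ldots,\lfloor w_n(\zeta)\rfloor\}$, the polynomials $P,TP,\ldots,T^{k-1}P$ are linearly independent (since $\zeta$ is transcendental) and all have degree $\leq N:=n+k-1$, height $H$, and value $O(H^{-w_n(\zeta)+\epsilon})$ at $\zeta$. This shows the $k$-th successive minimum of the polynomial convex body
\[
K^{(P)}_H=\{\mathbf{a}\in\mathbb{R}^{N+1}:\max_i|a_i|\leq H,\ |a_0+a_1\zeta+\cdots+a_N\zeta^N|\leq H^{-w_n(\zeta)}\}
\]
is bounded by a constant. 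By Mahler's transference between polar-dual bodies, the $(N+2-k)$-th successive minimum of the simultaneous body
\[
K^{(S)}_H=\{(q,\mathbf{y})\in\mathbb{R}^{N+1}:|q|\leq H^{w_n(\zeta)},\ \max_i|q\zeta^i-y_i|\leq H^{-1}\}
\]
is bounded below. Combining this with Minkowski's second theorem $\prod_j\mu_j(K^{(S)}_H)\asymp H^{N-w_n(\zeta)}$ and the standard estimate $\mu_1\leq(\mu_1\cdots\mu_n)^{1/n}$ yields $\mu_1(K^{(S)}_H)\ll H^{(N-w_n(\zeta))/n}$. A lattice point in $\mu_1\cdot K^{(S)}_H$ delivers integers $(q,y_1,\ldots,y_N)$ with $|q|\leq X$ and $\max_i|q\zeta^i-y_i|\leq X^{-\lambda}$ at the scale $X:=\mu_1\,H^{w_n(\zeta)}$, leading after a direct calculation to
\[
\lambda_N(\zeta)\geq\frac{w_n(\zeta)-k+1}{N+(n-1)\,w_n(\zeta)}.
\]

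The final step is AM--GM optimization. Setting $a:=N=n+k-1$ and $b:=w_n(\zeta)-k+1$, the sum $a+b=n+w_n(\zeta)$ is independent of $k$, so $ab$ is maximized at $a=b=(n+w_n(\zeta))/2$ --- a value attained up to $O(1)$ rounding by an admissible integer $k$, since $w_n(\zeta)\geq n$ by \eqref{eq:wmono}. Substituting and simplifying yields $N\lambda_N(\zeta)\geq(w_n(\zeta)/n+1)^2/(4\,w_n(\zeta)/n)+o(1)$ as $n\to\infty$. Taking $n\to\infty$ along a subsequence realizing $\overline{w}(\zeta)$ produces the first bound of \eqref{eq:wiedefwoben}; for the $\underline{\lambda}$ half, for each large $N$ one instead chooses $n\approx 2N/(\underline{w}(\zeta)+1)$ (which is itself large), using that $w_n(\zeta)/n\geq\underline{w}(\zeta)-\epsilon$ holds for all large $n$, and monotonicity of $f(r)=(r+1)^2/(4r)$ on $r\geq 1$ to transfer the asymptotic uniformly. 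I expect the main technical hurdle to lie in the transference step: Mahler's inequality delivers only a bound on a single higher successive minimum $\mu_{N+2-k}$, and converting this into the desired upper bound on $\mu_1$ requires invoking the sum-of-minima identity and tracking the volume scaling of $K^{(S)}_H$ carefully, with constants (depending on $n$, $k$) that must be shown to contribute only $o(\log H)$ in the exponent.
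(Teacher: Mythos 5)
Your proposal is correct and follows essentially the same strategy as the paper. Both proofs hinge on the same key idea: given a good polynomial $P$ of degree $\leq n$, the shifts $P, TP, \ldots, T^{k-1}P$ yield a rank-$k$ subspace of small polynomials of degree $\leq N=n+k-1$, which by Mahler duality forces a strong first minimum on the simultaneous approximation side at dimension $N$, and finally the dimension shift $k$ is optimized by the AM--GM symmetry $a=b$. The only difference is one of packaging: the paper encapsulates the lifting step in Proposition~\ref{wpropos} ($w_{m+n,m+1}(\zeta)\geq w_{n,1}(\zeta)$) and then runs the Schmidt--Summerer parametric geometry-of-numbers machinery --- \eqref{eq:umrechnen2} to translate to $\underline{\psi}^{\ast}_{m+n,m+1}$, the sum-of-minima inequality \eqref{eq:duales}, Mahler's relation \eqref{eq:jaja}, and \eqref{eq:umrechnen} to translate back to $\lambda_{m+n}$ --- whereas you re-derive exactly these ingredients from first principles: Mahler transference between polar-dual bodies plays the role of \eqref{eq:jaja}, and Minkowski's second theorem plus discarding the $k$ large minima plays the role of \eqref{eq:duales} and the $\psi$-to-exponent dictionary. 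The resulting intermediate bound $\lambda_N(\zeta)\geq(w_n(\zeta)-k+1)/(N+(n-1)w_n(\zeta))$ is literally identical to the paper's \eqref{eq:bratschi} with $C=w_n(\zeta)/n$ and $m=k-1$, and your optimization at $a=b$ matches the paper's choice $m=\lceil (C-1)n/2\rceil$. One small imprecision: you maximize $ab$ subject to $a+b$ fixed, but the quantity actually being maximized is $ab/(a+(n-1)w_n(\zeta))$; these coincide only asymptotically as $n\to\infty$ because the denominator is then dominated by the $k$-independent term $(n-1)w_n(\zeta)$. Since that is exactly the regime you work in, the argument goes through, but it is worth noting. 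The technical point you flag about converting a single lower-bounded high minimum into an upper bound on $\mu_1$ is indeed the crux, and your resolution (monotonicity of minima together with the Minkowski product, using $N+1-k=n$) is correct.
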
 

We close this section with a variant of Theorem~\ref{tnumber}
for uniform exponents, and for sake of completeness
also add consequences of 
Theorem~\ref{difizil} and from~\cite{ichrelations}.
Define the quantities
\begin{equation} \label{eq:q1}
\overline{\widehat{w}}(\zeta):= 
\limsup_{n\to\infty} \frac{\widehat{w}_{n}(\zeta)}{n}, \qquad\qquad \overline{\widehat{\lambda}}(\zeta):= 
\limsup_{n\to\infty} n\widehat{\lambda}_{n}(\zeta), 
\end{equation}
and
\begin{equation} \label{eq:q2}
\underline{\widehat{w}}(\zeta):= 
\liminf_{n\to\infty} \frac{\widehat{w}_{n}(\zeta)}{n}, \qquad\qquad \underline{\widehat{\lambda}}(\zeta):= \liminf_{n\to\infty} n\widehat{\lambda}_{n}(\zeta). 
\end{equation}
They satisfy $1\leq \underline{\widehat{w}}(\zeta)
\leq \overline{\widehat{w}}(\zeta)\leq 2$ and 
$1\leq \underline{\widehat{\lambda}}(\zeta)
\leq \overline{\widehat{\lambda}}(\zeta)\leq 2$ by \eqref{eq:wmono},
\eqref{eq:ldiri}, \eqref{eq:bound2} and
the estimate $\widehat{\lambda}_{n}(\zeta)\leq 2/n$,
as established
in~\cite[Theorem~2a]{davsh},~\cite{laurent},~\cite{ichrelations}
reproduced in Section~\ref{widhat} below. 
Moreover we have
\begin{equation} \label{eq:vergleich}
\overline{\widehat{w}}(\zeta)-1\leq \overline{\widehat{w}}^{\ast}(\zeta)\leq \overline{\widehat{w}}(\zeta), \qquad 
\underline{\widehat{w}}(\zeta)-1\leq \underline{\widehat{w}}^{\ast}(\zeta)\leq \underline{\widehat{w}}(\zeta)
\end{equation}
again by \eqref{eq:bound2}.

\begin{theorem}   \label{uniformiert}
Let $\zeta$ be a transcendental real number. Then 
the above defined exponents satisfy
\begin{equation} \label{eq:family}
\frac{(\overline{\widehat{w}}(\zeta)+1)^{2}}
{4\overline{\widehat{w}}(\zeta)} \leq
\overline{\widehat{\lambda}}(\zeta)\leq 1+\frac{1}{\underline{\widehat{w}}(\zeta)}, 
\qquad\qquad \frac{(\underline{\widehat{w}}(\zeta)+1)^{2}}{4\underline{\widehat{w}}(\zeta)} \leq
\underline{\widehat{\lambda}}(\zeta)\leq 1+\frac{1}{\overline{\widehat{w}}(\zeta)},
\end{equation}
and
\begin{equation} \label{eq:day}
\overline{\widehat{w}}(\zeta)
\leq \min\left\{ 2,\overline{w}(\zeta),1+
\frac{4}{\underline{w}(\zeta)}\right\}, \qquad\qquad
\underline{\widehat{w}}(\zeta)
\leq \min\left\{ 2,\underline{w}(\zeta), 1+
\frac{4}{\overline{w}(\zeta)}\right\}.
\end{equation}

\end{theorem}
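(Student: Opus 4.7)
The four inequalities split naturally into three tasks. First, the lower bounds $(\overline{\widehat{w}}+1)^{2}/(4\overline{\widehat{w}}) \leq \overline{\widehat{\lambda}}$ and $(\underline{\widehat{w}}+1)^{2}/(4\underline{\widehat{w}}) \leq \underline{\widehat{\lambda}}$ in \eqref{eq:family} are the uniform analogues of the lower bounds for $\overline{\lambda}, \underline{\lambda}$ in Theorem~\ref{tnumber}. The plan is to rerun that argument with $\widehat{w}_{n}(\zeta)$ in place of $w_{n}(\zeta)$: the construction there starts from a polynomial $P$ of degree at most $n$ with $|P(\zeta)|$ small and manufactures simultaneous rational approximations to $(\zeta,\ldots,\zeta^{N})$ for a suitably chosen $N$ somewhat larger than $n$. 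In the uniform setting such a polynomial is available at \emph{every} large scale $X$, so the induced simultaneous approximations are likewise available at every large scale $Y$, which translates into a lower bound on $\widehat{\lambda}_{N}(\zeta)$ rather than only on $\lambda_{N}(\zeta)$. Optimizing the ratio $N/n$ against the assumed growth rate of $\widehat{w}_{n}(\zeta)/n$ reproduces the quadratic expression $(x+1)^{2}/(4x)$ exactly as in the proof of Theorem~\ref{tnumber}.

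Second, the upper bounds $\overline{\widehat{\lambda}}(\zeta) \leq 1+1/\underline{\widehat{w}}(\zeta)$ and $\underline{\widehat{\lambda}}(\zeta) \leq 1+1/\overline{\widehat{w}}(\zeta)$ in \eqref{eq:family} will come from a pointwise German-type transference inequality of the shape $n\widehat{\lambda}_{n}(\zeta) \leq 1 + n/\widehat{w}_{n}(\zeta)$, which is available from~\cite{ichrelations} (and implicit already in~\cite{laurent}). Passing to $\limsup$ (respectively $\liminf$) of both sides and using $\limsup_{n} n/\widehat{w}_{n}(\zeta) = 1/\underline{\widehat{w}}(\zeta)$ and $\liminf_{n} n/\widehat{w}_{n}(\zeta) = 1/\overline{\widehat{w}}(\zeta)$ yields the claim.

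Finally, the estimates \eqref{eq:day} combine three ingredients already in place. The universal bound $\overline{\widehat{w}}(\zeta), \underline{\widehat{w}}(\zeta) \leq 2$ is the Davenport--Schmidt estimate $\widehat{w}_{n}(\zeta)\leq 2n-1$ recorded in \eqref{eq:bound2}; the bounds by $\overline{w}(\zeta), \underline{w}(\zeta)$ follow from the trivial $\widehat{w}_{n}(\zeta) \leq w_{n}(\zeta)$; and the bounds $\overline{\widehat{w}}(\zeta) \leq 1+4/\underline{w}(\zeta)$, $\underline{\widehat{w}}(\zeta) \leq 1+4/\overline{w}(\zeta)$ follow by inserting the estimates $\overline{\widehat{w}}^{\ast}(\zeta) \leq 4/\underline{w}(\zeta)$ and $\underline{\widehat{w}}^{\ast}(\zeta) \leq 4/\overline{w}(\zeta)$ from Theorem~\ref{difizil} into the comparison \eqref{eq:vergleich}. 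The main obstacle is the first task: one must verify that the construction used for $\lambda_{N}$ in Theorem~\ref{tnumber} can be arranged to produce admissible simultaneous approximations at every sufficiently large scale, which requires careful bookkeeping of the admissible parameter windows of the type familiar from parametric geometry of numbers. Once this uniform variant of the construction is in place, the optimization that produces $(x+1)^{2}/(4x)$ goes through verbatim.
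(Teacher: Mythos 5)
Your decomposition into three tasks is the right one, and two of the three match what the paper does. For the lower bounds in \eqref{eq:family}, re-running the proof of Theorem~\ref{tnumber} with $\widehat{w}_{n}$ in place of $w_{n}$ is exactly the paper's route: one applies the uniform half of Proposition~\ref{wpropos} to get $\widehat{w}_{m+n,m+1}(\zeta)\geq nC$, then pushes this through \eqref{eq:umrechnen2}, the dual Schmidt--Summerer inequalities \eqref{eq:duales}, Mahler's relations \eqref{eq:jaja}, and finally \eqref{eq:umrechnen}, with the same parameter choice $m=\lceil n(C-1)/2\rceil$; the quadratic $(x+1)^2/(4x)$ emerges exactly as you say. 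For \eqref{eq:day}, your three ingredients --- $\widehat{w}_{n}\leq 2n-1$ from \eqref{eq:bound2}, $\widehat{w}_{n}\leq w_{n}$, and inserting the bounds $\overline{\widehat{w}}^{\ast}\leq 4/\underline{w}$, $\underline{\widehat{w}}^{\ast}\leq 4/\overline{w}$ from Theorem~\ref{difizil} into the comparison \eqref{eq:vergleich} --- are precisely what the paper invokes.

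The gap is in your second task. You posit a pointwise, same-index inequality ``of the shape $n\widehat{\lambda}_{n}(\zeta) \leq 1 + n/\widehat{w}_{n}(\zeta)$'' attributed to~\cite{ichrelations}, from which the upper bounds in \eqref{eq:family} would drop out by taking $\limsup$/$\liminf$. But no such same-index inequality is what the paper relies on, and I do not believe one of that exact shape is available: the result from~\cite{ichrelations} that the paper actually uses is \eqref{eq:ribiza}, namely $\widehat{\lambda}_{m+n-1}(\zeta)\leq\max\{1/w_{m}(\zeta),\,1/\widehat{w}_{n}(\zeta)\}$, which inherently changes the index. The paper's argument is to pick, for each large $n$ with $\widehat{w}_{n}(\zeta)\geq n(\overline{\widehat{w}}(\zeta)-\epsilon)$ (resp.\ $\geq n(\underline{\widehat{w}}(\zeta)-\epsilon)$), the companion index $m=\lfloor n(\overline{\widehat{w}}(\zeta)-\epsilon)\rfloor$ (resp.\ $\lfloor n(\underline{\widehat{w}}(\zeta)-\epsilon)\rfloor$), use $w_{m}\geq m$ to kill the $1/w_{m}$ term in the max, and multiply by $N=m+n-1\leq n(1+\overline{\widehat{w}}(\zeta)-\epsilon)$; the factor $1+1/\overline{\widehat{w}}$ (resp.\ $1+1/\underline{\widehat{w}}$) comes from this arithmetic, not from a same-index transference. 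If you want to justify a pointwise same-index bound, you would need to prove it --- and since the natural index produced by \eqref{eq:ribiza} is $N\approx n(1+\widehat{w}_{n}/n)$ rather than $n$ itself, the two indices are genuinely unequal; one must also note that $\widehat{\lambda}$ is monotone in the index to interpolate between the values of $N$ hit by the construction. So the conclusion of your item 2 is correct, but the claimed one-line pointwise inequality is not a sound basis for it; you should instead derive the bound directly from \eqref{eq:ribiza} with the index change built in.
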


The particular consequence that $\overline{w}(\zeta)=\infty$ implies
$\underline{\widehat{w}}(\zeta)=1$ was already noticed 
in~\cite[Corollary~2.5]{bschlei}. From \eqref{eq:family} 
we can also deduce that
$\overline{\widehat{w}}(\zeta)>1$ implies
$\overline{\widehat{\lambda}}(\zeta)>1$, and $\underline{\widehat{w}}(\zeta)>1$ implies
$\underline{\widehat{\lambda}}(\zeta)>1$. 
We believe that the converse implications hold as well.
German~\cite{german} established refinements of the transference
principle \eqref{eq:khintchine} for the uniform exponents, 
however they are again insufficient for such problems.
It would be nice to include the exponents
on $\overline{w}^{\ast}(\zeta)$ and $\underline{w}^{\ast}(\zeta)$
in the picture, related to the Wirsing problem. However, we do not
know what to conjecture. We remark that from the sparse 
present results on the 
exponents $\widehat{w}_{n}, \widehat{\lambda}_{n}$, 
we cannot exclude that the quantities 
in \eqref{eq:q1}, \eqref{eq:q2} 
all equal $1$ for any transcendental real number $\zeta$.

\section{Applications: Metric theory and spectra} \label{metric}

\subsection{The joint spectrum of $(\lambda_{n})_{n\geq 1}$} \label{spectrum}

We study the set of sequences
$\{(\lambda_{n}(\zeta))_{n\geq 1}:\zeta\in\mathbb{R}\}$,
which we will refer to as the {\em joint spectrum 
of $(\lambda_{n}(\zeta))_{n\geq 1}$}. 
Bugeaud~\cite[Theorem~4]{bug} showed the existence of transcendental real $\zeta$ such that
$\lambda_{n}(\zeta)=1$ for all $n\geq 1$. Thus the constant $1$ sequence
belongs to the joint spectrum. 
Moreover~\cite[Theorem~5]{bug} 
asserted that for given $\lambda\in[1,3]$ there exists transcendental 
real $\zeta$ such that $\lambda_{1}(\zeta)=\lambda$ and $\lambda_{2}(\zeta)=1$. 
Both claims are sharp in some sense. Indeed, in view of
\begin{equation} \label{eq:herz}
\lambda_{nk}(\zeta)\geq \frac{\lambda_{k}(\zeta)-n+1}{n}, 
\qquad\qquad k\geq 1,\; n\geq 1,
\end{equation}
from~\cite[Lemma~1]{bug}, for $k=2, n=1$ we see that $\lambda_{1}(\zeta)\leq 3$ 
when $\lambda_{2}(\zeta)=1$.
A conjectured generalization of \eqref{eq:herz} 
from~\cite{schlei} was rephrased in Section~\ref{brief}. 
As pointed out,
there is equality in \eqref{eq:herz} if $\lambda_{nk}(\zeta)>1$, in particular 
\begin{equation} \label{eq:trampel}
\lambda_{n}(\zeta)=\frac{\lambda_{1}(\zeta)-n+1}{n},  \qquad \qquad \text{if} \; \lambda_{n}(\zeta)>1.
\end{equation}
Hence we cannot have $\lambda_{n}(\zeta)>1$ for all $n\geq 1$,
unless $\zeta$ is a Liouville number, that is $\lambda_{1}(\zeta)=\infty$,
and in this case the joint spectrum is the constant $\infty$ sequence
by \eqref{eq:herz} as observed in~\cite[Corollary~2]{bug}. 
The identity \eqref{eq:trampel} implied a negative answer 
on Bugeaud's~\cite[Problem~2]{bug}
where he proposed that the conditions 
\eqref{eq:ldiri}, \eqref{eq:reihenfolge} and \eqref{eq:herz}
might be the only limitations for the joint spectrum
of $(\lambda_{n}(\zeta))_{n\geq 1}$. Our 
Theorem~\ref{nabum} and Corollary~\ref{lamkor}
clearly again show that this is far from being true.

As a consequence of Theorem~\ref{unumber}, we 
determine the joint spectrum 
of $(\lambda_{n}(\zeta))_{n\geq 1}$ among $U_{2}$-numbers $\zeta$,
thereby among all $\zeta$ satisfying $\lambda_{n}(\zeta)>1/2$ 
for all $n\geq 1$ by Theorem~\ref{nabum}.

\begin{theorem} \label{besser}
Let $\zeta$ be a $U_{2}$-number with $w_{1}(\zeta)=w\in[1,\infty)$. Then 
\begin{align}
\lambda_{n}(\zeta)&= \frac{w+1-n}{n}, \qquad \qquad\qquad 1\leq n\leq \frac{w+1}{2},   \label{eq:1} \\
\lambda_{n}(\zeta)&= \lambda_{n,2}(\zeta)=1, \qquad\qquad\qquad n\geq \frac{w+1}{2}.   \label{eq:2}
\end{align}
In particular if $w=1$ then $\lambda_{n}(\zeta)=1$ for all $n\geq 1$. 
The sequences of the form
\begin{equation} \label{eq:3}
\left(w,\frac{w-1}{2},\frac{w-2}{3},\ldots,\frac{w+1-\lfloor \frac{w+1}{2}\rfloor}{\lfloor \frac{w+1}{2}\rfloor}, 1,1,1,\ldots \right), \qquad w\geq 1,
\end{equation}
coincide precisely with the sequences $(\lambda_{n}(\zeta))_{n\geq 1}$ induced by the set of $U_{2}$-numbers $\zeta$.
In particular they all belong to the joint spectrum of $(\lambda_{n})_{n\geq 1}$.
Conversely, the sequences in \eqref{eq:3} with $w\in[1,\infty]$ are precisely those sequences in the joint spectrum of $(\lambda_{n})_{n\geq 1}$
with $\lambda_{n}(\zeta)>\frac{1}{2}$ for all $n\geq 1$.
\end{theorem}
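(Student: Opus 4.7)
The plan is to combine the Bugeaud inequality \eqref{eq:herz} with the rigidity \eqref{eq:trampel}, the lower bound from Theorem~\ref{unumber}, and a direct lattice construction for the second minimum, to determine both $(\lambda_n(\zeta))_{n\geq 1}$ and the second successive minima $\lambda_{n,2}(\zeta)$ for $n\geq (w+1)/2$. Since $\lambda_1(\zeta)=w_1(\zeta)=w$, applying \eqref{eq:herz} with $k=1$ gives $\lambda_n(\zeta)\geq (w+1-n)/n$ for every $n\geq 1$. When $n<(w+1)/2$ this lower bound strictly exceeds $1$, and \eqref{eq:trampel} then forces equality, establishing \eqref{eq:1}. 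For $n\geq (w+1)/2$, Theorem~\ref{unumber} specialized to $m=2$ yields $\lambda_n(\zeta)\geq 1/(m-1)=1$, while $\lambda_n(\zeta)>1$ would by \eqref{eq:trampel} imply $\lambda_n(\zeta)=(w+1-n)/n\leq 1$, a contradiction; hence $\lambda_n(\zeta)=1$, completing the first equality of \eqref{eq:2}. The boundary case $n=(w+1)/2$ (when it is an integer) is consistent with both formulas.

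The second-minimum statement $\lambda_{n,2}(\zeta)=1$ has the trivial upper bound $\lambda_{n,2}(\zeta)\leq\lambda_n(\zeta)=1$, so the work is to produce two linearly independent integer vectors in $\mathbb{Z}^{n+1}$ simultaneously approximating $(\zeta,\zeta^2,\ldots,\zeta^n)$ at rate approaching $X^{-1}$. Exploiting the $U_2$-property of $\zeta$, I choose quadratic algebraic $\alpha$ with $|\zeta-\alpha|<H(\alpha)^{-M}$ for arbitrarily large $M$; letting $P(T)=aT^2+bT+c$ be the minimal polynomial of $\alpha$, the identities $a^{i-1}\alpha^i=u_i\alpha+v_i$ hold for explicit integers $u_i,v_i$. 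For two successive convergents $p_k/q_k, p_{k+1}/q_{k+1}$ of $\alpha$ (recalling that $\alpha$ has bounded partial quotients), the integer vector defined by $x=a^{n-1}q_k$ and $y_i=a^{n-i}(u_ip_k+q_kv_i)$ for $1\leq i\leq n$, together with its $(k+1)$-analogue, are linearly independent thanks to $p_kq_{k+1}-p_{k+1}q_k=\pm 1$ and a short determinantal check. A direct manipulation yields
\[
|x\zeta^i-y_i|\leq a^{n-1}q_k\,|\zeta^i-\alpha^i|+a^{n-i}|u_i|\,|q_k\alpha-p_k|.
\]
With $X\sim a^{n-1}q_k$, the first summand is $O(X\cdot a^{-M})$ and the second is $O(X/q_k^2)$; balancing $M$ and $q_k$ against $X$ gives simultaneous approximation at rate $X^{-1+o(1)}$, hence $\lambda_{n,2}(\zeta)\geq 1$. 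The main technical obstacle is precisely this quantitative balance, as both error terms must be driven below $X^{-1+\varepsilon}$ uniformly as $X$ ranges through all sufficiently large values.

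For the joint-spectrum characterization, formulas \eqref{eq:1} and \eqref{eq:2} reproduce exactly the sequences \eqref{eq:3}, and each $w\in[1,\infty)$ is realized by some $U_2$-number via standard constructions modelled on LeVeque~\cite{leveque}. Conversely, suppose $\zeta$ is transcendental with $\lambda_n(\zeta)>1/2$ for every $n\geq 1$; then Theorem~\ref{nabum} forces $\zeta$ to be a $U_m$-number with $1/(m-1)>1/2$, hence $m\in\{1,2\}$. The Liouville case $m=1$ delivers the constant $\infty$ sequence, which is the $w=\infty$ instance of \eqref{eq:3}, and the case $m=2$ falls under the analysis above.
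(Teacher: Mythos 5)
Your derivation of \eqref{eq:1} and \eqref{eq:2} up to the value of $\lambda_n(\zeta)$, via $\lambda_1(\zeta)=w_1(\zeta)=w$, \eqref{eq:herz}, Theorem~\ref{unumber}, and \eqref{eq:trampel}, matches the paper's route, and the joint-spectrum characterization at the end is likewise essentially the same (both proofs cite the existence of $U_2$-numbers with arbitrary prescribed $w_1$ and invoke Theorem~\ref{nabum} for the converse). Where you genuinely diverge is the proof of $\lambda_{n,2}(\zeta)=1$. The paper gets this almost for free from the parametric-geometry machinery already built for Theorem~\ref{unumber}: knowing $\lambda_n(\zeta)=1$ forces equality in \eqref{eq:vdb}, which by \eqref{eq:varvieh} gives $\overline{\psi}_{n,n}^{\ast}=\overline{\psi}_{n,n+1}^{\ast}$, hence $\underline{\psi}_{n,1}=\underline{\psi}_{n,2}$ by Mahler duality \eqref{eq:jaja}, hence $\lambda_{n,2}=\lambda_n$ via \eqref{eq:umrechnen}. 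You instead build two independent integer points explicitly from successive convergents of a very good quadratic approximation $\alpha$ to $\zeta$. Your approach is more elementary and constructive, and it makes the geometry of what is happening quite visible; the paper's approach is much shorter and reuses the successive-minima framework, but is less transparent about the underlying approximating vectors.

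Concerning your construction, the idea is sound and the linear independence check via $p_kq_{k+1}-p_{k+1}q_k=\pm1$ is fine, but the quantitative estimate of the second error term is not correct as written. You claim the second summand $a^{n-i}|u_i|\,|q_k\alpha-p_k|$ is $O(X/q_k^2)$, which amounts to $|u_i|\ll a^{i-1}$. The recursion $u_{i+1}=av_i-bu_i$, $v_{i+1}=-cu_i$ instead gives $|u_i|,|v_i|\ll H(\alpha)^{i-1}$, so the second summand is only $O\!\left(H(\alpha)^{n-1}/q_k\right)$, which can be much larger than $X/q_k^2=a^{n-1}/q_k$ when the leading coefficient $a$ is small compared to $H(\alpha)$. (Your first-term estimate $O(X\cdot a^{-M})$ has the same imprecision: the correct decay is $O(X\cdot H(\alpha)^{-M})$, and conflating $a$ with $H(\alpha)$ in one place but not the other makes the "balance" opaque.) The construction still goes through with the corrected bound: with $A=H(\alpha)$ you need roughly $q_k\geq \left(A^{n-1}a^{(n-1)(1-\varepsilon)}\right)^{1/\varepsilon}$ for the second term and $q_k\leq A^{M/(2-\varepsilon)}/a^{n-1}$ for the first, a non-empty window once $M\geq 2(n-1)(2-\varepsilon)/\varepsilon$, which the $U_2$-property delivers; and $X=a^{n-1}q_k$ then ranges over arbitrarily large values as $A\to\infty$. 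So this is a reparable computational slip, not a structural flaw, but the flagged balance is exactly where the incorrect estimate would mislead a reader, so it should be fixed.
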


The claims vastly generalize both~\cite[Theorem~4 and Theorem~5]{bug} mentioned above.
In the proof we will use the existence of $U_{2}$-numbers with any prescribed value $w_{1}(\zeta)\in[1,\infty)$.
We point out that more generally 
Alnia\c{c}ik~\cite{alni} essentially constructed $U_{n}$-numbers $\zeta$ with prescribed value of $w_{1}(\zeta)\in[1,\infty)$,
for any $n\geq 2$ (although he only explicitly stated
the case $w_{1}(\zeta)=1$ in~\cite{alni}).
See also~\cite{alu} for $U$-numbers with small transcendence degree.
However, the existence of $U_{n}$-numbers which satisfy the hypothesis $w_{n-1}(\zeta)=n-1$, let alone the more general hypothesis,
in Theorem~\ref{unumber} is open for $n\geq 3$, which among other things prevents us from generalizing Theorem~\ref{besser}.
Observe that even the strong hypothesis for \eqref{eq:zzz}
would be covered by 
an affirmative answer to Problem~\ref{mp}.

\subsection{Upper bounds for $\widehat{\lambda}_{n}(\zeta)$} \label{getupper} 
 
Assume $n\geq 1$ is an integer and $\zeta$ a 
transcendental real number with the property $w_{n}(\zeta)<n+1$.
If we let $N=2n$, as a consequence of~\eqref{eq:vor4} we obtain
\begin{equation} \label{eq:vor3}
\lambda_{2n}(\zeta)\leq \frac{1}{\widehat{w}_{n}(\zeta)} \leq \frac{1}{n}.
\end{equation}
Upon the assumption $w_{n}(\zeta)<n+1$, the classical
estimates \eqref{eq:reihenfolge} and \eqref{eq:khintchine} 
would only yield
$\lambda_{2n}(\zeta)\leq \lambda_{n}(\zeta)<\frac{2}{n}$,
so \eqref{eq:vor3} yields an improvement by the factor $2$.
We can use the conditional result
\eqref{eq:vor3} to sharpen the best known upper bound for the 
exponent $\widehat{\lambda}_{n}(\zeta)$ for even $n$. 
The problem on determining such bounds
dates back to Davenport and Schmidt~\cite{davsh} who
established a relation to approximation to real numbers 
by algebraic integers, connected to Wirsing's Problem~\cite{wirsing}.
Their original result has been refined for odd $n$ by
Laurent~\cite{laurent}, who showed 
$\widehat{\lambda}_{2n}(\zeta)\leq\widehat{\lambda}_{2n-1}(\zeta)\leq n^{-1}$.
A significantly shorter
proof of this bound together
with a slight refinement of the bound for even $n$
was recently given by the author~\cite[Theorem~2.3]{ichrelations}. 
Our new refinement
for even $n$ is again based on~\cite[Theorem~2.1]{ichrelations}. 
It asserts that
for $m,n$ positive integers and $\zeta$ any 
transcendental real number, the estimate
\begin{equation} \label{eq:ribiza}
\widehat{\lambda}_{m+n-1}(\zeta)\leq\max \left\{\frac{1}{w_{m}(\zeta)},
\frac{1}{\widehat{w}_{n}(\zeta)}\right\}
\end{equation}
holds. The specification $m=n$ directly led to Laurent's
estimate quoted above.
The addition of~\eqref{eq:vor3} leads to a better bound. The variable $n$ in \eqref{eq:ribiza} will correspond 
to $n+1$ in the proof of following Theorem~\ref{unifthm}.

\begin{theorem} \label{unifthm}
Let $n\geq 1$ be an integer and $\zeta$ a transcendental real number. 
Then we have 
\begin{equation} \label{eq:tropf}
\widehat{\lambda}_{2n}(\zeta)\leq  
\sqrt{\left( n+\frac{1}{2n}\right)^{2}-\frac{1}{n}}-n+\frac{1}{2n}.
\end{equation}
In the case of $\lambda_{2n}(\zeta)>\frac{1}{n}$, the stronger
bound $\widehat{\lambda}_{2n}(\zeta)\leq \frac{1}{n+1}$ holds.
\end{theorem}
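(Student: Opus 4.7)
The plan is to combine the instance of \eqref{eq:ribiza} flagged in the paragraph preceding the theorem (parameter $m=n$, with the symbol called $n$ in \eqref{eq:ribiza} replaced by $n+1$, so the left-hand side becomes $\widehat{\lambda}_{2n}$) with \eqref{eq:vor3}, and to extract a quadratic inequality in $\lambda:=\widehat{\lambda}_{2n}(\zeta)$. I would first dispose of the conditional (second) assertion. If $\lambda_{2n}(\zeta)>1/n$, then since $\widehat{w}_n(\zeta)\geq n$, the conclusion $\lambda_{2n}(\zeta)\leq 1/\widehat{w}_n(\zeta)\leq 1/n$ of \eqref{eq:vor3} would contradict this hypothesis; hence $w_n(\zeta)\geq n+1$. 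Combined with the Dirichlet bound $\widehat{w}_{n+1}(\zeta)\geq n+1$, the quoted instance of \eqref{eq:ribiza} immediately gives $\widehat{\lambda}_{2n}(\zeta)\leq \max\{1/w_n(\zeta),\,1/\widehat{w}_{n+1}(\zeta)\}\leq 1/(n+1)$, proving the stronger conditional bound.

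For the main bound, note that the asserted $f(n):=\sqrt{(n+1/(2n))^{2}-1/n}-n+1/(2n)$ strictly exceeds $1/(n+1)$ for every $n\geq 1$, so I may assume $\lambda>1/(n+1)$. Applying the same instance of \eqref{eq:ribiza}, the estimate $1/\widehat{w}_{n+1}(\zeta)\leq 1/(n+1)<\lambda$ forces $\lambda\leq 1/w_n(\zeta)$, so $w_n(\zeta)\leq 1/\lambda<n+1$. The hypothesis of \eqref{eq:vor3} is now met and yields $\lambda_{2n}(\zeta)\leq 1/\widehat{w}_n(\zeta)$; combined with $\widehat{\lambda}_{2n}(\zeta)\leq\lambda_{2n}(\zeta)$ this gives $\widehat{w}_n(\zeta)\leq 1/\lambda$. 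Thus the single hypothesis $\lambda>1/(n+1)$ has produced a pair of polynomial-side upper bounds $w_n(\zeta),\widehat{w}_n(\zeta)\leq 1/\lambda$, together with the strictly stronger datum $\lambda_{2n}(\zeta)\widehat{w}_n(\zeta)\leq 1$ coming from \eqref{eq:vor3}.

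The closing step is to convert these constraints into the quadratic
\[
n\lambda^{2}+(2n^{2}-1)\lambda+1-2n\leq 0,
\]
whose positive root rearranges exactly to $f(n)$: the discriminant $(2n^{2}-1)^{2}-4n(1-2n)=(2n^{2}+1)^{2}-4n$ is precisely $4n^{2}$ times the radicand appearing in the statement, from which the advertised closed form follows by the quadratic formula. I expect this final algebraic reduction to be the main obstacle. A direct Jarnik-type transference, combining $\widehat{w}_n(\zeta)\leq 1/\lambda$ with the lower bound $\widehat{w}_n(\zeta)\geq 1/(1-\widehat{\lambda}_n(\zeta))\geq 1/(1-\lambda)$ induced by $\widehat{\lambda}_n(\zeta)\geq\widehat{\lambda}_{2n}(\zeta)=\lambda$, produces only $\lambda\leq 1/2$, which is already too weak for $n\geq 2$. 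To recover the precise coefficients $(n,\,2n^{2}-1,\,1-2n)$ one must exploit that \eqref{eq:vor3} controls the best approximation exponent $\lambda_{2n}(\zeta)$ (not merely the uniform $\widehat{\lambda}_{2n}(\zeta)$) and pair $\lambda_{2n}(\zeta)\widehat{w}_n(\zeta)\leq 1$ with Khintchine's transference \eqref{eq:khintchine} in dimension $n$ relating $w_n(\zeta)$ and $\lambda_n(\zeta)\geq\lambda_{2n}(\zeta)$; the resulting two-variable system in $\lambda$ and $\lambda_{2n}(\zeta)$ should tighten, via $\lambda_{2n}(\zeta)\geq\lambda$ and $w_n(\zeta),\widehat{w}_n(\zeta)\leq 1/\lambda$, to the single-variable quadratic above.
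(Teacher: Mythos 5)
Your treatment of the conditional assertion and your case split on $w_n(\zeta)\geq n+1$ versus $w_n(\zeta)<n+1$ match the paper's argument, and your identification that the closed form $f(n)$ is the positive root of $n\lambda^{2}+(2n^{2}-1)\lambda+(1-2n)=0$ is correct algebra. But there is a genuine gap where you say "I expect this final algebraic reduction to be the main obstacle," and the route you sketch to close it does not work. Feeding $\lambda\leq\lambda_n(\zeta)\leq(w_n(\zeta)-n+1)/n$ (the right half of Khintchine) together with $w_n(\zeta)\leq 1/\lambda$ gives $n\lambda^{2}+(n-1)\lambda-1\leq 0$, whose positive root is exactly $1/n$ — no stronger than what $\lambda_{2n}(\zeta)\leq 1/n$ already told you. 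The coefficients $(n,2n^{2}-1,1-2n)$ do not come from Khintchine's transference \eqref{eq:khintchine} in dimension $n$ at all.

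The ingredient the argument actually needs is a sharp relation between the \emph{uniform} and \emph{ordinary} simultaneous exponents in the \emph{same} dimension $2n$, namely Schmidt and Summerer's inequality \cite[(1.21)]{ssch}:
\begin{equation*}
\widehat{\lambda}_{2n}(\zeta)\leq
-\frac{2n-2+(2n-1)\lambda_{2n}(\zeta)}{2}+
\sqrt{\left(\frac{2n-2+(2n-1)\lambda_{2n}(\zeta)}{2}\right)^{2}+(2n-1)\lambda_{2n}(\zeta)}.
\end{equation*}
The right-hand side is increasing in $\lambda_{2n}(\zeta)$, so substituting the bound $\lambda_{2n}(\zeta)\leq 1/n$ obtained from \eqref{eq:vor3} gives, after simplification ($A=n-\tfrac{1}{2n}$, $A^{2}+2-\tfrac{1}{n}=(n+\tfrac{1}{2n})^{2}-\tfrac{1}{n}$), precisely \eqref{eq:tropf}. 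This is exactly the step the paper takes. Your auxiliary constraints $w_n(\zeta),\widehat{w}_n(\zeta)\leq 1/\lambda$ are true but do not by themselves force the required quadratic; without the Schmidt--Summerer inequality (or something of equivalent strength bounding $\widehat{\lambda}_{2n}$ by $\lambda_{2n}$), the proof cannot be completed.
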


\begin{proof}
The estimate \eqref{eq:ribiza} with proper choices of integer
parameters yields
\[
\widehat{\lambda}_{2n}(\zeta)\leq 
\max\left\{ \frac{1}{w_{n}(\zeta)}, 
\frac{1}{\widehat{w}_{n+1}(\zeta)}\right\}.
\]
In the case of $w_{n}(\zeta)\geq n+1$, by \eqref{eq:wmono} we infer 
$\widehat{\lambda}_{2n}(\zeta)\leq (n+1)^{-1}$, which is smaller
than the right hand side in \eqref{eq:tropf}. 
In case of $w_{n}(\zeta)< n+1$,
we may apply~\eqref{eq:vor3}. We insert this value
$\lambda_{2n}(\zeta)= n^{-1}$ in the reformulation
\[
\widehat{\lambda}_{2n}(\zeta)\leq 
-\frac{2n-2+(2n-1)\lambda_{2n}(\zeta)}{2}+
\sqrt{ \left(\frac{2n-2+(2n-1)\lambda_{2n}(\zeta)}{2}\right)^{2}+(2n-1)\lambda_{2n}(\zeta)}
\]
of Schmidt and Summerer~\cite[(1.21)]{ssch}, and 
elementary rearrangements lead to \eqref{eq:tropf}. Reversing the
proof we see that $\lambda_{2n}(\zeta)>\frac{1}{n}$
implies $w_{n}(\zeta)\geq n+1$, and as above we infer the bound 
$\widehat{\lambda}_{2n}(\zeta)\leq (n+1)^{-1}$.
\end{proof}

The bound in \eqref{eq:tropf} is of asymptotic order
$\frac{1}{n}-\frac{1}{2n^{2}}+O(n^{-3})$.
We obtain a reasonable improvement to
the old bound $\widehat{\lambda}_{2n}(\zeta)$ 
of order $\frac{1}{n}-\frac{1}{2n^{3}}+O(n^{-4})$
in~\cite[Theorem~2.3]{ichrelations}. We explicitly
state the new estimates for $n=1$ and $n=2$, which read
\[
\widehat{\lambda}_{2}(\zeta)\leq 
\frac{\sqrt{5}-1}{2}=0.6180\ldots, \qquad
\widehat{\lambda}_{4}(\zeta)\leq 
\frac{\sqrt{73}-7}{4}=0.3860\ldots.
\]
The bound for $\widehat{\lambda}_{2}(\zeta)$ is well-known to be sharp
as shown by Roy~\cite{roy}. Roy~\cite{damroy} also established 
the bound
$\widehat{\lambda}_{3}(\zeta)\leq (2+\sqrt{5}-\sqrt{7+2\sqrt{5}})/2=
0.4245\ldots$,  which previously represented the best known 
bound for $\widehat{\lambda}_{4}(\zeta)$ as well.
Our new bound for $\widehat{\lambda}_{4}(\zeta)$ is finally
smaller. Improvements
of \eqref{eq:tropf} can be made  for $n\geq 2$, conditional
on the conjecture
of Schmidt and Summerer proposed in~\cite[page~92]{sums}
concerning the minimum value of 
the quotient $\lambda_{n}(\zeta)/\widehat{\lambda}_{n}(\zeta)$
in terms of $\widehat{\lambda}_{n}(\zeta)$. 
Concretely,  applying~\cite[(30)]{ichjp}
we obtain as an conditional upper bound  
for $\widehat{\lambda}_{2n}(\zeta)$ the implicit solution 
$\lambda=\lambda(n)$ of the polynomial equation
\[
n^{2n} \lambda^{2n+1}-(n+1)\lambda +1=0,
\]
in the interval $(\frac{1}{n+1}, \frac{1}{n})$.
It can be shown that this value $\lambda$ is of the 
form $\frac{1}{n}-\frac{\alpha}{n^{2}}+O(n^{-3})$, 
for $\alpha\in(0.796,0.797)$ the unique positive
real root of the power series
\[
-1+\sum_{k=1}^{\infty} \frac{(-2)^{k+1}}{(k+1)!}x^{k}=
-1+2x-\frac{4}{3}x^{2}+\frac{2}{3}x^{3}
-\frac{4}{15}x^{4}+\frac{4}{45}x^{5}-\cdots.
\]
%
The resulting conditional numerical bounds for 
some small $n$ can be computed as 
\[
\widehat{\lambda}_{4}(\zeta)\leq 0.3706\ldots
, \quad \widehat{\lambda}_{6}(\zeta)\leq 0.2681\ldots,
\quad \widehat{\lambda}_{20}(\zeta)\leq 0.0928\ldots.
\]
In comparison,
the unconditional bounds from \eqref{eq:tropf} are numerically given by $\widehat{\lambda}_{4}(\zeta)\leq 0.3860\ldots$,
$\widehat{\lambda}_{6}(\zeta)\leq 0.2803\ldots$
and $\widehat{\lambda}_{20}(\zeta)\leq 0.0950\ldots$.

\subsection{On the case $\widehat{w}_{n}(\zeta)>n$} \label{widhat}

Using Schmidt's Subspace Theorem, 
Adamczewski and Bugeaud~\cite{adambug} found explicit upper bounds
for the exponents $w_{m}(\zeta)$ for numbers $\zeta$ that admit
many very small polynomial evaluations $\vert P_{i}(\zeta)\vert$ at
$P_{i}\in\mathbb{Z}[T]$ of bounded degree and high rate in the sense 
of $\log H(P_{i+1})/\log H(P_{i})$ being absolutely bounded. 
See also A. Baker~\cite{abaker} for earlier results in the case $n=1$.
In~\cite[Section~5]{adambug} they provided types of numbers
that fall into this category.
Any number that satisfies 
\begin{equation}  \label{eq:whut}
\widehat{w}_{n}(\zeta)>n, \qquad\qquad \text{for some}\;\; n\geq 2, 
\end{equation}
and is not a $U_{m}$-number for some $m\leq n$, 
has the desired property. In fact we can exclude the case $m=n$
by~\cite[Corollary~2.5]{bschlei}, and $m=1$ as well
by~\cite[Theorem~1.12]{schlei}. So there is no additional 
condition when $n=2$.
For such numbers 
they established an exponential bound of the form
\begin{equation} \label{eq:adb}
w_{m}^{\ast}(\zeta)\leq \exp (c\cdot(\log 3m)^{n} (\log\log 3m)^{n}),
\qquad m\geq n+1,
\end{equation}
where $c=c(\zeta)>0$ is some ineffective 
constant~\cite[Theorem~4.2,~5.3]{adambug}.
In particular numbers that satisfy \eqref{eq:whut}
cannot be $U_{m}$-numbers
for $m>n$.  If we replace \eqref{eq:whut} by the (at least formally)
stronger condition
\begin{equation} \label{eq:wsternhut}
\widehat{w}_{n}^{\ast}(\zeta)>n, \qquad\qquad \text{for some}\;\; n\geq 2, 
\end{equation}
the same conclusion \eqref{eq:adb} holds without 
any additional condition by~\cite[Theorem~2.4]{bschlei}.
It is probable that the 
condition \eqref{eq:whut}
in fact implies $\widehat{w}_{n}(\zeta)=\widehat{w}_{n}^{\ast}(\zeta)$,
Bugeaud recently posed 
the case $n=2$ as a problem~\cite[Problem~2.9.7]{bdraft}. 
Another class of numbers $\zeta$ satisfying the property
are Champerowne-type
numbers whose expansion in some base $b\geq 2$ is
of the form $\zeta=\zeta_{b,P}=0.(P(1))_{b}(P(2))_{b}\ldots$, where
$P\in\mathbb{Z}[T]$ is a non-constant polynomial
and $(P(h))_{b}$ is the integer $h$ written in base $b$. 
The classical Champerowne number is obtained 
for $b=10$ and $P(T)=T$.
We have 
\begin{equation} \label{eq:champ}
w_{m}^{\ast}(\zeta_{b,P})\leq (2m)^{c^{\prime}\cdot \log\log 3m}, \qquad\qquad m\geq 1,
\end{equation}
where $c^{\prime}=c^{\prime}(\zeta_{b,P})$ is again 
a suitable constant~\cite[Theorem~3.1,~5.1]{adambug}.
See~\cite[Section~5]{adambug} for more examples.
Corollary~\ref{juchurra} combined with \eqref{eq:adb} and \eqref{eq:champ}
yields an estimate for the minimum decay of the 
exponents $\lambda_{N}(\zeta)$ for large $N$, in the case of
the Champerowne-type numbers by roughly some (ineffective) negative 
power of $N$.

\begin{corollary} \label{landak}
Let $\zeta$ be a real number and $\varepsilon>0$
arbitrarily small. First assume that either
\eqref{eq:whut} and if $n\geq 3$
additionally $w_{n-1}(\zeta)<\infty$ holds, 
or \eqref{eq:wsternhut} holds. Then
there exists a constant $d=d(\zeta,\varepsilon)>0$ so that
\begin{equation} \label{eq:gbush}
\lambda_{N}(\zeta)\leq \exp (-d (\log N)^{\frac{1-\varepsilon}{n}}), 
\qquad\qquad N\geq 1.
\end{equation}
For $\zeta=\zeta_{b,P}$ any Champerowne-type number we 
have the stronger decay
\begin{equation} \label{eq:gwbush}
\lambda_{N}(\zeta)\leq \exp (-d^{\prime} (\log N)^{1-\varepsilon}), 
\qquad\qquad N\geq 1,
\end{equation}
for suitable $d^{\prime}=d^{\prime}(\zeta,\varepsilon)>0$.
In particular in both cases $\lim_{N\to\infty} \lambda_{N}(\zeta)=0$.
\end{corollary}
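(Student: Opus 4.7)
The plan is to combine the ``collapse'' bound \eqref{eq:vor2} from Corollary~\ref{juchurra} with the given polynomial evaluation estimates \eqref{eq:adb} and \eqref{eq:champ}, and then optimize over the auxiliary degree parameter. Before invoking \eqref{eq:vor2}, I would first verify $w_m(\zeta)<\infty$ for every $m\geq 1$. Under the first hypothesis the excerpt already notes that $\zeta$ is neither a $U_n$-number nor a $U_1$-number, and the additional assumption $w_{n-1}(\zeta)<\infty$ when $n\geq 3$ together with \eqref{eq:obv} takes care of $m\leq n-1$; for $m\geq n+1$ finiteness is immediate from \eqref{eq:adb}. Under the second hypothesis, \cite[Theorem~2.4]{bschlei} yields \eqref{eq:adb} unconditionally and the same conclusion follows. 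Thus the hypothesis of Corollary~\ref{juchurra} is met for every $m$.

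Next, by \eqref{eq:duzu} one has $w_m(\zeta)\leq w_m^{\ast}(\zeta)+m-1$, so for all sufficiently large $m$ both hypotheses give respectively
\[
w_m(\zeta)\leq 2\exp\!\bigl(c(\log 3m)^{n}(\log\log 3m)^{n}\bigr),\qquad w_m(\zeta)\leq 2(2m)^{c'\log\log 3m}.
\]
The strategy is now: given $N$, pick the largest integer $m=m(N)$ such that $\lceil w_m(\zeta)\rceil+2m-1\leq N$, so that \eqref{eq:vor2} delivers $\lambda_N(\zeta)\leq 1/m(N)$.

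For the first case, write $L=\log N$ and $u=\log 3m$. The constraint simplifies (for large $N$) to $cu^{n}(\log u)^{n}\leq L(1+o(1))$. For every fixed $\varepsilon>0$ this is satisfied by some $u$ of order $L^{(1-\varepsilon)/n}$, because the parasitic factor $(\log u)^{n}\ll u^{n\varepsilon/(1-\varepsilon)}$ for large $u$. Thus $m(N)\gg \exp\!\bigl(L^{(1-\varepsilon)/n}\bigr)$, yielding \eqref{eq:gbush} after absorbing finitely many small $N$ into the constant $d=d(\zeta,\varepsilon)$. For the Champerowne case the constraint becomes $c'\log\log(3m)\cdot\log(2m)\leq L(1+o(1))$. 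With $v=\log 2m$ this is $v\log v\lesssim L/c'$, whose solution grows like $v\asymp L/\log L$; so $m(N)\gg \exp(L/\log L)$, and since $L/\log L\geq d' L^{1-\varepsilon}$ for large $L$, we obtain \eqref{eq:gwbush}.

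The conclusion $\lim_{N\to\infty}\lambda_N(\zeta)=0$ is then immediate, and in fact also follows independently from Corollary~\ref{juchurra} once we know $\zeta$ is not a $U$-number. The main obstacle is essentially bookkeeping: one must perform the inversion $N\mapsto m(N)$ carefully, absorb the ineffective constants $c$ and $c'$ into $d$ and $d'$, and adjust $d$ so that the bounds survive for all $N\geq 1$ (rather than only asymptotically), where the trivial estimate $\lambda_N(\zeta)\leq \lambda_1(\zeta)$ can be used for the finitely many exceptional $N$.
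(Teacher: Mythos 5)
Your proposal is correct and follows essentially the same route as the paper: apply the collapse bound \eqref{eq:vor2} with a carefully chosen auxiliary degree $m$, use \eqref{eq:duzu} to pass from $w_m^\ast$ to $w_m$, and invert the resulting relation between $m$ and $N$ via the quantitative estimates \eqref{eq:adb} and \eqref{eq:champ}. Your preliminary verification that $w_m(\zeta)<\infty$ for all $m$ (split by whether $m\leq n-1$, $m=n$, or $m\geq n+1$) makes explicit a hypothesis the paper's proof takes for granted, and your direction of inversion ($N\mapsto m(N)$ rather than $m\mapsto N$) is a cosmetic difference; otherwise the arguments coincide.
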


\begin{proof}
We show \eqref{eq:gbush}.
In the preceding comments we pointed out that
\eqref{eq:adb} is satisfied for $\zeta$ that satisfies
any of the stated assumptions.
Combining this with \eqref{eq:duzu} and some crude estimates imply
$w_{m}(\zeta)\leq Y:=\exp (c^{\prime}(\log m)^{n(1+\varepsilon)})$
with some $c^{\prime}=c^{\prime}(\zeta,\varepsilon)$ possibly 
slightly larger than $c$. If 
we let $N=\lceil Y\rceil+2m$, from \eqref{eq:vor2} we obtain
$\lambda_{N}(\zeta)\leq 1/m$. Hence, for large $N$ and suitable $d$, 
the claim \eqref{eq:gbush} follows from
elementary estimates and rearrangements. 
Since $\zeta$ is not a $U_{1}$-number and thus 
$\lambda_{N}(\zeta)\leq \lambda_{1}(\zeta)<\infty$ for all $N\geq 1$,
by increasing $d$ if necessary we may consider any $N\geq 1$.
The claim \eqref{eq:gwbush} for Champerowne-type numbers
follows in a similar way from \eqref{eq:champ} and 
Corollary~\ref{juchurra}.
\end{proof}

Upon very similar assumptions as for \eqref{eq:adb}, 
for integers $m$ not exceeding some bound,
significantly smaller upper bounds for $w_{m}(\zeta)$ were established 
in~\cite{bschlei}.
Indeed~\cite[Theorem~2.4]{bschlei} can be reformulated in the
following way.
Upon the assumption $\widehat{w}_{n}^{\ast}(\zeta)-(n+u-1)>0$ 
for some integer $u\geq 1$, we have
\begin{equation} \label{eq:sturz}
w_{n+j}(\zeta) \leq 
\frac{(n-1)\widehat{w}_{n}^{\ast}(\zeta)}
{\widehat{w}_{n}^{\ast}(\zeta)-(n+j)}, \qquad\qquad 0\leq j\leq u-1.
\end{equation}
Similarly, if $\widehat{w}_{n}(\zeta)-(n+u-1)>0$
and additionally
\begin{equation} \label{eq:uaah}
w_{n+u-1}(\zeta)>w_{n-1}(\zeta), \qquad
\text{or} \qquad \widehat{w}_{n}(\zeta)=\widehat{w}_{n}^{\ast}(\zeta)
\end{equation}
holds, then~\cite[Theorem~2.2]{bschlei} analogously asserts
\begin{equation} \label{eq:salsa}
w_{n+j}(\zeta) \leq 
\frac{(n-1)\widehat{w}_{n}(\zeta)}
{\widehat{w}_{n}(\zeta)-(n+j)}, \qquad\qquad 0\leq j\leq u-1.
\end{equation}
In particular $w_{n+u-2}(\zeta)<(n-1)(2n-1)=2n^{2}-3n+1$ 
if $u\geq 2$ by \eqref{eq:bound2}. The estimate
for $w_{n+u-1}(\zeta)$ is still 
reasonably good unless $\widehat{w}_{n}^{\ast}(\zeta)$ 
(or $\widehat{w}_{n}(\zeta)$) is very close to $n+u-1$.
As another new contribution, by combining results from 
Section~\ref{refin} we infer upper bounds for
the next larger exponent $w_{n+u}(\zeta)$. They turn out
to be better than~\eqref{eq:adb} for $m=n+u$,
upon a stronger assumption.
We start with the most general version and specify below.

\begin{theorem} \label{bugadam}
Let $\zeta$ be a transcendental real number and $n\geq 2$ and
$u\geq 1$ be integers.
If $\widehat{w}_{n}^{\ast}(\zeta)>n+u-1$
is satisfied, then we have
\begin{equation} \label{eq:unkon}
w_{n+u}(\zeta) \leq 
\frac{ \widehat{w}_{n}^{\ast}(\zeta)^{3}-u\widehat{w}_{n}^{\ast}(\zeta)^{2}+(nu+n-1)\widehat{w}_{n}^{\ast}(\zeta)-n^{2}}
{(\widehat{w}_{n}^{\ast}(\zeta)-n)(\widehat{w}_{n}^{\ast}(\zeta)-n-u+1)}.
\end{equation}
If we assume $\widehat{w}_{n}(\zeta)>n+u-1$ and additionally
\eqref{eq:uaah} holds, then we have
\begin{equation} \label{eq:tja}
w_{n+u}(\zeta) \leq 
\frac{ \widehat{w}_{n}(\zeta)^{3}-u\widehat{w}_{n}(\zeta)^{2}+(nu+n-1)\widehat{w}_{n}(\zeta)-n^{2}}
{(\widehat{w}_{n}(\zeta)-n)(\widehat{w}_{n}(\zeta)-n-u+1)}.
\end{equation}
In particular, if we have either
$\widehat{w}_{n}^{\ast}(\zeta)-(n+u-1)=:\delta_{1}>0$,
or 
$\widehat{w}_{n}(\zeta)-(n+u-1)=:\delta_{2}>0$ and \eqref{eq:uaah},
then for some effectively computable constant $c>0$ we have
\begin{equation} \label{eq:asyms}
w_{n+u}(\zeta) \leq c \cdot \frac{n^{3}}{\delta_{i}^{2}}, 
\end{equation}
for $i=1$ and $i=2$ respectively. In the case of $u\geq 2$ 
we have $w_{n+1}(\zeta) \leq c^{\prime}n^{3}/\delta_{i}$.
\end{theorem}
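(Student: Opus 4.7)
The plan is to extend the Davenport--Schmidt style bounds on $w_{n+j}(\zeta)$ for $0\leq j\leq u-1$ supplied by~\eqref{eq:sturz} and~\eqref{eq:salsa} to the next exponent $j=u$. The extension proceeds via Theorem~\ref{juchu}: first bound $\lambda_{n+u}(\zeta)$ using the intermediate $w_{n+j}$ estimates, then invert through Khintchine's transference~\eqref{eq:khintchine} to extract an upper bound on $w_{n+u}(\zeta)$.

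Set $w:=\widehat{w}_n^*(\zeta)$ in the first case or $w:=\widehat{w}_n(\zeta)$ in the second. Step~1 applies~\eqref{eq:sturz} (respectively~\eqref{eq:salsa}, invoking~\eqref{eq:uaah}) to record the bounds $w_{n+j}(\zeta)\leq (n-1)w/(w-n-j)$ for $0\leq j\leq u-1$; in particular $w_n(\zeta)$ is finite, so every quantity required downstream is well-defined. Step~2 applies estimate~\eqref{eq:vor1} of Theorem~\ref{juchu} with parameter $n$ and index $N=n+u$, which is legitimate once the hypothesis $N\geq \lceil w_n(\zeta)\rceil +n-1$ is checked using the Step~1 bound. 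Combining with $\widehat{w}_n(\zeta)\geq w$ (from~\eqref{eq:bound2}) and the trivial $\widehat{w}_{u+1}(\zeta)\geq u+1$ yields a bound of the form
\[
\lambda_{n+u}(\zeta)\leq \max\left\{\frac{1}{w},\ \frac{1}{(u+1)-w_n(\zeta)}\right\},
\]
which becomes a concrete rational function of $w$ after substituting the Step~1 estimate for $w_n(\zeta)$.

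Step~3 converts this into a bound on $w_{n+u}(\zeta)$ via the upper half of~\eqref{eq:khintchine}, rearranged as $w_{n+u}(\zeta)\leq (n+u)\lambda_{n+u}(\zeta)/(1-(n+u-1)\lambda_{n+u}(\zeta))$. Routine algebraic simplification should reproduce~\eqref{eq:unkon} and~\eqref{eq:tja}. For the asymptotics, write $w=n+u-1+\delta_i$: the denominator $(w-n)(w-n-u+1)=(u-1+\delta_i)\delta_i$ is $\Theta(\delta_i^2)$ for $u=1$ and $\Theta(\delta_i)$ for $u\geq 2$, while the cubic numerator evaluated near $w=n+u-1$ is $\Theta(n^3)$, producing the universal $cn^3/\delta_i^2$ estimate. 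The sharper bound $w_{n+1}(\zeta)\leq c'n^3/\delta_i$ for $u\geq 2$ is immediate from~\eqref{eq:sturz}/\eqref{eq:salsa} at $j=1$, whose denominator $w-n-1=u-2+\delta_i$ has size $\Theta(1)$ for $u\geq 3$ and size $\delta_i$ precisely when $u=2$.

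The principal obstacle is checking that Theorem~\ref{juchu} applies uniformly in $n$ and $u$. When $u\geq n-1$, the Step~1 bound forces $w_n(\zeta)\leq u+1$ once $w$ is moderately larger than $n+u-1$, so~\eqref{eq:vor1} goes through directly; but for $u\leq n-2$ the constraint $\lceil w_n(\zeta)\rceil\leq u+1$ may fail outright, and one must either switch to the complementary estimate~\eqref{eq:vor4} of Theorem~\ref{juchu} (whose admissible range $\lfloor w_n\rfloor+n\leq N\leq 3n$ is tailored to exactly this small-$u$ regime), or apply~\eqref{eq:vor1} with a different parameter $k<n$, drawing on~\eqref{eq:obv} together with the Step~1 bounds on $w_k(\zeta)$ across $k\leq n+u-1$. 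The algebraic bookkeeping needed to verify that every branch of the case analysis yields the same rational function~\eqref{eq:unkon}/\eqref{eq:tja} is where the bulk of the work lies.
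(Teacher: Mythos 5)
The proposal has a genuine gap at Step~2, and the approach cannot be repaired without importing the paper's key lower-bound ingredient. You attempt to apply estimate~\eqref{eq:vor1} of Theorem~\ref{juchu} at the index $N=n+u$, which requires $N\geq\lceil w_n(\zeta)\rceil+n-1$. But the theorem's hypothesis $\widehat{w}_n^{\ast}(\zeta)>n+u-1$ forces, via~\eqref{eq:bound2} and~\eqref{eq:wmono}, that $w_n(\zeta)\geq\widehat{w}_n(\zeta)\geq\widehat{w}_n^{\ast}(\zeta)>n+u-1$, hence $\lceil w_n(\zeta)\rceil+n-1\geq(n+u)+n-1=2n+u-1>n+u$ for every $n\geq 2$. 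So the admissibility condition \emph{never} holds; your claim that it ``goes through directly'' for $u\geq n-1$ (and is salvageable otherwise) is incorrect. Your proposed fallbacks do not help either: \eqref{eq:vor4} requires $N\geq\lfloor w_n(\zeta)\rfloor+n\geq 2n+u-1$, which fails identically, and replacing the parameter $n$ by a smaller $k<n$ weakens the term $1/\widehat{w}_k(\zeta)$ and, since~\eqref{eq:sturz}/\eqref{eq:salsa} only control $w_{n+j}$ for $j\geq 0$, leaves $w_k(\zeta)$ uncontrolled for $k<n$. Moreover, even granting a hypothetical bound on $\lambda_{n+u}(\zeta)$, the Khintchine inversion you propose only works if $\lambda_{n+u}(\zeta)<1/(n+u-1)$, which the hypotheses do not guarantee, and there is no reason the resulting rational function would match~\eqref{eq:unkon}/\eqref{eq:tja}.

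The paper's proof proceeds quite differently: it applies~\eqref{eq:vor1} at the \emph{much larger} index $N=\lceil w_n(\zeta)+\widehat{w}_n(\zeta)\rceil+n-1$ (where admissibility is automatic), obtaining $\lambda_N(\zeta)\leq 1/\widehat{w}_n(\zeta)$, and crucially pairs this with a \emph{lower} bound on the same $\lambda_N(\zeta)$ coming from~\eqref{eq:bratschi} (the main inequality underlying Theorem~\ref{tnumber}), with the decomposition $N=m+(n+u)$. It is that lower bound which carries the quantity $w_{n+u}(\zeta)$ into the inequality; comparing the two sides and then substituting the $j=0$ case of~\eqref{eq:sturz}/\eqref{eq:salsa} yields the stated bounds. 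Your outline has no analogue of this lower-bound step, and Khintchine's transference cannot substitute for it here. You would need to rebuild the argument around the two-sided estimate at the shifted index $N$, not at $n+u$ itself.
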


We highlight the case $u=1$, where the involved
conditions become more natural and the resulting bounds
can be rearranged to a nicer form as well.

\begin{corollary} \label{bugadamc}
Let $\zeta$ be a transcendental real number
and assume \eqref{eq:wsternhut} holds for some integer $n\geq 2$.
Then we have
\begin{equation} \label{eq:unkonp}
w_{n+1}(\zeta) \leq 
\frac{\widehat{w}_{n}^{\ast}(\zeta)^{3}-\widehat{w}_{n}^{\ast}(\zeta)}
{(\widehat{w}_{n}^{\ast}(\zeta)-n)^{2}}-1.
\end{equation}
If we assume the weaker condition \eqref{eq:whut} and additionally
\eqref{eq:uaah} for $u=1$, then similarly
\begin{equation} \label{eq:tjap}
w_{n+1}(\zeta) \leq \frac{\widehat{w}_{n}(\zeta)^{3}-
\widehat{w}_{n}(\zeta)}
{(\widehat{w}_{n}(\zeta)-n)^{2}}-1.
\end{equation}
In particular, if either 
$\widehat{w}_{n}^{\ast}(\zeta)-n=:\delta_{1}>0$ holds
or $\widehat{w}_{n}(\zeta)-n=:\delta_{2}>0$ and \eqref{eq:uaah}
holds, then we have $w_{n+1}(\zeta) < 8n^{3}/\delta_{i}^{2}$
for $i=1$ and $i=2$ respectively.
\end{corollary}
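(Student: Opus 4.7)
The plan is to obtain Corollary~\ref{bugadamc} as the direct specialization $u=1$ of Theorem~\ref{bugadam}, followed by an elementary algebraic simplification and a short asymptotic check for the explicit constant $8$.

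First I would substitute $u=1$ into \eqref{eq:unkon}. Writing $w:=\widehat{w}_{n}^{\ast}(\zeta)$, the numerator becomes $w^{3}-w^{2}+(2n-1)w-n^{2}$, and the denominator $(w-n)(w-n-u+1)$ collapses to $(w-n)^{2}$. A one-line polynomial division (or a direct expansion of the right-hand side) confirms the identity
\[
\frac{w^{3}-w^{2}+(2n-1)w-n^{2}}{(w-n)^{2}} \;=\; \frac{w^{3}-w}{(w-n)^{2}}-1,
\]
which is precisely \eqref{eq:unkonp}. The identical manipulation applied to \eqref{eq:tja} yields \eqref{eq:tjap}, once one observes that condition \eqref{eq:uaah} specialized to $u=1$ reads "$w_{n}(\zeta)>w_{n-1}(\zeta)$ or $\widehat{w}_{n}(\zeta)=\widehat{w}_{n}^{\ast}(\zeta)$", which is exactly the auxiliary hypothesis of the second half of the corollary. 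Thus matching the hypotheses of the corollary with those of Theorem~\ref{bugadam} requires only this brief verification.

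For the final "in particular" clause, I would write $w=n+\delta_{i}$ with $\delta_{i}>0$ in the two respective cases. Using the universal bound $\widehat{w}_{n}(\zeta)\leq 2n-1$ from \eqref{eq:bound2} (which applies to $\widehat{w}_{n}^{\ast}(\zeta)$ as well), we have $w<2n$, hence $w^{3}<8n^{3}$, and therefore
\[
w_{n+1}(\zeta)\leq \frac{w^{3}-w}{\delta_{i}^{2}}-1 \;<\; \frac{w^{3}}{\delta_{i}^{2}} \;<\; \frac{8n^{3}}{\delta_{i}^{2}}.
\]
No serious obstacle is expected: the entire argument is mechanical once Theorem~\ref{bugadam} is in hand. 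The only step that requires mild care is aligning the two hypotheses for the $u=1$ specialization, and the only pitfall in the final estimate is remembering to invoke \eqref{eq:bound2} to bound $w$ from above before extracting the constant $8$.
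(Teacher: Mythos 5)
Your proposal is correct and follows exactly the same route as the paper: specialize Theorem~\ref{bugadam} to $u=1$, verify the algebraic identity $\frac{w^{3}-w^{2}+(2n-1)w-n^{2}}{(w-n)^{2}}=\frac{w^{3}-w}{(w-n)^{2}}-1$, and extract the constant $8$ via the crude bound $w\leq 2n-1<2n$ from \eqref{eq:bound2}. The only difference is that you spell out the polynomial identity and the bound on the numerator explicitly, whereas the paper compresses this into a one-line remark.
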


\begin{proof}
Let $u=1$ in Theorem~\ref{bugadam} and rearrange the right hand side
to obtain \eqref{eq:unkonp} and \eqref{eq:tjap}. The factor
$8n^3$ reflects a crude estimate of the nominator
using \eqref{eq:bound2}.
\end{proof}

We expect that the unpleasant condition 
$w_{n}(\zeta)>w_{n-1}(\zeta)$ for \eqref{eq:tjap} can be dropped, which would yield an unconditional
improvement of \eqref{eq:adb} for $m=n+1$.
We can prove this to be true in the case $n=2$.

\begin{corollary} \label{wk}
Let $\zeta$ be a real number that satisfies $\widehat{w}_{2}(\zeta)>2$.
Then we have
\[
w_{3}(\zeta)\leq 
\frac{ \widehat{w}_{2}(\zeta)^{3}-\widehat{w}_{2}(\zeta)^{2}+3\widehat{w}_{2}(\zeta)-4}{(\widehat{w}_{2}(\zeta)-2)^{2}}\leq 
\frac{d}{(\widehat{w}_{2}(\zeta)-2)^{2}},
\]
where we may choose $d=14.9444$.
\end{corollary}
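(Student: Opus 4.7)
The target inequality matches exactly the $n=2, u=1$ case of \eqref{eq:tjap} in Corollary~\ref{bugadamc}, which is available once the side condition \eqref{eq:uaah} is verified; for $n=2, u=1$ this reads $w_{2}(\zeta)>w_{1}(\zeta)$ or $\widehat{w}_{2}(\zeta)=\widehat{w}_{2}^{\ast}(\zeta)$. The plan is to show that in the case $n=2$ this side condition can be dropped, and then to optimise the numerical constant $d$.

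Set $f(x):=(x^{3}-x^{2}+3x-4)/(x-2)^{2}$; a derivative check shows $f$ is strictly increasing on $(2,\infty)$. Split on the value of $\widehat{w}_{2}^{\ast}(\zeta)$. If $\widehat{w}_{2}^{\ast}(\zeta)>2$, apply the unconditional bound \eqref{eq:unkon} of Theorem~\ref{bugadam} with $n=2, u=1$ to obtain $w_{3}(\zeta)\leq f(\widehat{w}_{2}^{\ast}(\zeta))$; combined with $\widehat{w}_{2}^{\ast}(\zeta)\leq \widehat{w}_{2}(\zeta)$ from \eqref{eq:bound2}, monotonicity of $f$ yields $w_{3}(\zeta)\leq f(\widehat{w}_{2}(\zeta))$, the desired estimate.

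In the remaining subcase $\widehat{w}_{2}^{\ast}(\zeta)\leq 2<\widehat{w}_{2}(\zeta)$ the second alternative of \eqref{eq:uaah} fails, so to invoke Corollary~\ref{bugadamc} we must establish $w_{2}(\zeta)>w_{1}(\zeta)$. The starting observation is that for all sufficiently large $X$, any polynomial $P$ of degree at most $2$ realising \eqref{eq:w} with exponent $w>2$ must be irreducible quadratic: a reducible factorisation $P=QR$ into linear factors, together with $H(Q)H(R)\asymp H(P)\leq X$ and $|Q(\zeta)||R(\zeta)|=|P(\zeta)|\leq X^{-w}$, would (after extracting the dominant factor) produce a linear polynomial with a uniform approximation rate exceeding $w/2>1$, contradicting Dirichlet's identity $\widehat{w}_{1}(\zeta)=1$. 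The main obstacle is then to promote the existence of these irreducible quadratic uniform realisers — exploiting also that $\widehat{w}_{2}^{\ast}(\zeta)\leq 2$ forces the derivative $|P'(\alpha)|$ at the root $\alpha$ closest to $\zeta$ to be relatively large — into the strict separation $w_{2}(\zeta)>w_{1}(\zeta)$ that Corollary~\ref{bugadamc} demands; this should follow from a direct comparison of heights and approximation rates of the irreducible realisers against any linear approximants, but requires careful bookkeeping to exclude pathological coincidences.

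Finally, since the numerator $x^{3}-x^{2}+3x-4$ is increasing, the constant $d$ is obtained by evaluation at the extremal value $\widehat{w}_{2}(\zeta)\leq (3+\sqrt{5})/2$. This extremal bound follows from Roy's estimate $\widehat{\lambda}_{2}(\zeta)\leq (\sqrt{5}-1)/2$ recalled in Section~\ref{getupper}, via Jarn\'ik's identity $\widehat{\lambda}_{2}(\zeta)=1-1/\widehat{w}_{2}(\zeta)$ valid for transcendental $\zeta$. Substitution yields numerator $6+4\sqrt{5}=14.9443\ldots$, so $d=14.9444$ is admissible.
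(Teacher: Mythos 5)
Your reduction is on the right track — you correctly identify that the left inequality is \eqref{eq:tjap} with $n=2$, $u=1$, and that the crux is verifying the side condition \eqref{eq:uaah}. Your Case 1 (when $\widehat{w}_{2}^{\ast}(\zeta)>2$, invoke the unconditional bound \eqref{eq:unkon} and use monotonicity together with $\widehat{w}_{2}^{\ast}(\zeta)\le\widehat{w}_{2}(\zeta)$) is valid, and your derivation of the numerical constant from $\widehat{w}_{2}(\zeta)\le(3+\sqrt{5})/2$ is correct.

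However, your Case 2 has a genuine gap, which you yourself acknowledge: the sketch via irreducibility of the quadratic realisers and ``careful bookkeeping'' never actually delivers the strict inequality $w_{1}(\zeta)<w_{2}(\zeta)$. Showing that the uniform witnesses are irreducible quadratics is not enough — you would still need to rule out $w_{1}(\zeta)=w_{2}(\zeta)$ coming from a separate, very good sequence of linear approximants, and nothing in your argument addresses this. The paper sidesteps the case split entirely by invoking the result (from~\cite[Theorem~2.5]{bschlei}) that $\min\{w_{m}(\zeta),\widehat{w}_{n}(\zeta)\}\le m+n-1$ for all transcendental $\zeta$: taking $m=1$, $n=2$ gives $\min\{w_{1}(\zeta),\widehat{w}_{2}(\zeta)\}\le 2$, and since $\widehat{w}_{2}(\zeta)>2$ by hypothesis, necessarily $w_{1}(\zeta)\le 2<\widehat{w}_{2}(\zeta)\le w_{2}(\zeta)$, giving \eqref{eq:uaah} unconditionally. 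With that lemma available, the whole argument is a single line and your case split (and its unresolved second branch) becomes unnecessary.

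A minor point: the bound $\widehat{w}_{2}(\zeta)\le(3+\sqrt{5})/2$ is due to Davenport and Schmidt~\cite{davsh} directly; routing it through Roy's $\widehat{\lambda}_{2}$-bound and Jarn\'{\i}k's identity gives the same number but is unnecessary and somewhat anachronistic, since the Davenport--Schmidt bound predates (and implies, via the identity) the $\widehat{\lambda}_{2}$ version.
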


\begin{proof}
Application of \eqref{eq:tjap} for $n=2$ yields the left inequality. 
We have to check that 
$\widehat{w}_{2}(\zeta)>2$ implies its condition 
$w_{1}(\zeta)<w_{2}(\zeta)$.
In~\cite[Theorem~2.5]{bschlei} it was shown
that $\min\{ w_{m}(\zeta), \widehat{w}_{n}(\zeta)\}\leq m+n-1$
holds for any positive integers $m,n$ and transcendental real $\zeta$.
Indeed, with $m=1$ and $n=2$, since $\widehat{w}_{2}(\zeta)>2$ 
this is only possible if 
$w_{1}(\zeta)\leq 2<\widehat{w}_{2}(\zeta)\leq w_{2}(\zeta)$.
Finally the numeric constant can be derived 
from $\widehat{w}_{2}(\zeta)\leq (3+\sqrt{5})/2$, see
Davenport and Schmidt~\cite{davsh}.
\end{proof}

A special subclass of numbers with the property $\widehat{w}_{2}(\zeta)>2$ are Sturmian continued fractions, see~\cite{buglaur}.
For such numbers Corollary~\ref{landak} and Corollary~\ref{wk} apply.
Previously it was not even known that for such numbers
$\lim_{N\to\infty} \lambda_{N}(\zeta)=0$ holds. On the other hand,
the exponent $w_{3}(\zeta)$ for Sturmian continued fractions
has been explicitly determined, from~\cite{buglaur} 
and~\cite[Theorem~2.1]{ichstw} we know that
$w_{2}(\zeta)=w_{3}(\zeta)=\widehat{w}_{2}(\zeta)/(\widehat{w}_{2}(\zeta)-2)$. However, there are many more numbers that satisfy
$\widehat{w}_{2}(\zeta)>2$, see for example Roy~\cite{roydicht}, for
which Corollary~\ref{wk} provides the first explicit upper
bounds for $w_{3}(\zeta)$ in terms of $\widehat{w}_{2}(\zeta)$.
On the other hand, no number with the property
\eqref{eq:whut} for some $n\geq 3$ is known.

\subsection{Metric theory}

Now we turn to the metric problem
of determining the Hausdorff dimensions
\[
h_{N}^{\lambda}= 
\dim(H_{N}^{\lambda}), \qquad \qquad
H_{N}^{\lambda}:=\{ \zeta\in\mathbb{R}: \lambda_{N}(\zeta)\geq \lambda\}
\]
posed in~\cite[Problem~2]{bug}. We use the subscript index $N$ instead 
of $n$ to avoid confusion in the proofs later.
Obviously the values $h_{N}^{\lambda}$ decay in both variables
$N,\lambda$. Furthermore,
$0\leq h_{N}^{\lambda}\leq 1$ for all $N$ and $\lambda$, and 
$h_{N}^{\lambda}=1$ when $\lambda\leq 1/N$ by \eqref{eq:ldiri}. 
Usually one is interested in the values $h_{N}^{\lambda}$ as a function of $\lambda\in[1/N,\infty]$ for fixed $N$.
For parameters greater than one, as a consequence 
of \eqref{eq:trampel} and the one-dimensional formula
by Jarn\'ik~\cite{jar} it was shown in~\cite[Corollary~1.8]{schlei} that 
\begin{equation} \label{eq:altegl}
h_{N}^{\lambda}=\frac{2}{(1+\lambda)N}, \qquad\qquad N\geq 1,\;\lambda> 1.
\end{equation}
For $N=2$, the problem is solved for parameters $\lambda\leq 1$  as well, Beresnevich, Dickinson and Velani~\cite{vel} showed that
\begin{equation} \label{eq:bervel}
h_{2}^{\lambda}= \frac{2-\lambda}{1+\lambda}, 
\qquad\qquad \frac{1}{2}\leq \lambda\leq 1.
\end{equation}
For $N\geq 3$ and $\lambda\leq 1$, the problem of 
determining $h_{N}^{\lambda}$ is open. The lower bound
\begin{equation} \label{eq:altegle}
h_{N}^{\lambda}\geq \frac{2}{(1+\lambda)N}, \qquad\qquad N\geq 1,\;\lambda\geq \frac{1}{N},
\end{equation}
follows from \eqref{eq:herz} as noticed in~\cite{bug}.
For $\lambda$ close to $1/N$, 
Beresnevich~\cite[Theorem~7.2]{beres} showed
\begin{equation}  \label{eq:beresn}
h_{N}^{\lambda} \geq \frac{N+1}{1+\lambda}-(N-1), 
\qquad\qquad \frac{1}{N}\leq \lambda< \frac{3}{2N-1}.
\end{equation}
He expects equality in the given interval. 
The upper bounds in \eqref{eq:bervel} clearly hold for
$N\geq 3$ and $\lambda\in[1/2,1]$ as well, however no improvement 
has been established yet and for $\lambda<1/2$ 
in fact nothing is known.
For technical reasons we also introduce the 
auxiliary variations
\[
g_{N}^{\lambda}= \dim(G_{N}^{\lambda}), \qquad\qquad
G_{N}^{\lambda}:= \{ \zeta\in\mathbb{R}: \lambda_{N}(\zeta)>\lambda\},
\]
of $H_{N}^{\lambda}, h_{N}^{\lambda}$. Clearly $h_{N}^{\lambda+\epsilon}\leq g_{N}^{\lambda}\leq h_{N}^{\lambda}$ for any $N,\lambda$ and $\epsilon>0$.
We in fact expect $g_{N}^{\lambda}=h_{N}^{\lambda}$ for all $N,\lambda$,
but this seems not to be completely obvious.

Essentially
by \eqref{eq:leftie} and Bernik's formula \eqref{eq:bern}, 
we can refine the lower bounds in \eqref{eq:altegle}
and establish non-trivial upper bounds.
We formulate several variants of new results.
First, comparable to \eqref{eq:beresn}, 
we estimate $h_{N}^{\lambda}$ 
for parameters $\lambda$ in a fixed 
ratio with the trivial lower bound $1/N$,
asymptotically for large $N$.

\begin{theorem}  \label{dadthm}
Let $\widetilde{\lambda}\geq 1$ be a parameter, and for $N\geq 1$ let 
$\theta_{N}=\widetilde{\lambda}\cdot\frac{1}{N}$. Then we have
\begin{equation} \label{eq:neuy}
h_{N}^{\theta_{N}} \geq 
\frac{1}{2\widetilde{\lambda}-1+
2\sqrt{\left(\widetilde{\lambda}\right)^2-\widetilde{\lambda}}}-O(N^{-1})
\geq \frac{1}{4 \widetilde{\lambda}}-O(N^{-1}), \qquad N\geq 1,\; \widetilde{\lambda}\geq 1.
\end{equation}
On the other hand, we have
\begin{equation} \label{eq:baldda}
h_{N}^{\theta_{N}} \leq 
\frac{1}{\widetilde{\lambda}-2}+O(N^{-1}), \qquad\qquad \qquad N\geq 1,\;\widetilde{\lambda}\geq 3.
\end{equation}
Furthermore
\begin{equation} \label{eq:balduin}
h_{N}^{\theta_{N}} \leq 
\frac{2\widetilde{\lambda}}{2\widetilde{\lambda}-1
+\sqrt{4\widetilde{\lambda}^{2}-8 \widetilde{\lambda}+1}}+O(N^{-1}),
\qquad\qquad N\geq 1,\;\widetilde{\lambda}\geq 2,
\end{equation}
and for even $N$ moreover
\begin{equation} \label{eq:zusatz}
g_{N}^{\theta_{N}} \leq \frac{N+2}{N+4}, 
\qquad\qquad\qquad\qquad \widetilde{\lambda}=2,\; N\in\{2,4,6,\ldots\}.
\end{equation}
\end{theorem}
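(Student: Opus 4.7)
The four estimates of Theorem~\ref{dadthm} combine the transfer inequalities proved in Section~\ref{refin} (specifically Theorems~\ref{tnumber} and \ref{juchu} together with Corollary~\ref{juchurra}) with Bernik's dimension formula~\eqref{eq:bern}. Lower bounds on $h_N^{\theta_N}$ come from producing many $\zeta$ with $w_n(\zeta)$ large for a single $n$ of suitable size; upper bounds come from translating the hypothesis $\lambda_N(\zeta) \geq \widetilde{\lambda}/N$ into a forced lower bound on some $w_m(\zeta)$ and then appealing to Bernik.

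For the lower bound \eqref{eq:neuy}, set $w_0 = 2\widetilde{\lambda} - 1 + 2\sqrt{\widetilde{\lambda}^2 - \widetilde{\lambda}}$, the larger root of $(w+1)^2 = 4\widetilde{\lambda}\, w$. Since \eqref{eq:wiedefwoben} of Theorem~\ref{tnumber} is stated only as a $\liminf$ inequality, the plan is to extract from its proof a pointwise version: for each large $N$, choose an integer $n = n(N)$ of order $N/w_0$ such that $w_n(\zeta) \geq w_0 n$ forces $\lambda_N(\zeta) \geq \widetilde{\lambda}/N$ up to $O(N^{-1})$ corrections. Bernik~\eqref{eq:bern} then yields
\[
h_N^{\theta_N} \;\geq\; \frac{n+1}{w_0 n + 1} \;=\; \frac{1}{w_0} - O(N^{-1}),
\]
and the weaker explicit form $1/(4\widetilde{\lambda}) - O(N^{-1})$ follows from $w_0 \leq 4\widetilde{\lambda}$.

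For the upper bounds we fix $\zeta$ with $\lambda_N(\zeta) \geq \widetilde{\lambda}/N$ (ignoring $U$-numbers, whose contribution is of dimension zero). To prove \eqref{eq:baldda} take $n = \lceil N/\widetilde{\lambda}\rceil + 1$, so $\lambda_N(\zeta) > 1/n$; the hypothesis $N \geq \lceil w_n(\zeta)\rceil + 2n - 1$ in \eqref{eq:vor2} of Corollary~\ref{juchurra} must then fail, giving $w_n(\zeta) \geq N - 2n + 1$ and hence $w_n(\zeta)/n \geq \widetilde{\lambda} - 2 - O(N^{-1})$; Bernik then delivers $h_N^{\theta_N} \leq (n+1)/(w_n(\zeta)+1) \leq 1/(\widetilde{\lambda}-2) + O(N^{-1})$. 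For \eqref{eq:zusatz}, specialize \eqref{eq:vor4} to $N = 2n$: its hypothesis is $w_n(\zeta) < n+1$, and under it $w_{N-2n}(\zeta) = w_0(\zeta) = 0$ together with $\widehat{w}_k(\zeta) \geq k$ yield $\lambda_{2n}(\zeta) \leq 1/n$; therefore $\lambda_{2n}(\zeta) > 1/n$ forces $w_n(\zeta) \geq n+1$, and Bernik yields $g_{2n}^{1/n} \leq (n+1)/(n+2) = (N+2)/(N+4)$. The intermediate bound \eqref{eq:balduin} follows similarly by applying \eqref{eq:vor4} with $N = 2n+m$ and $n > N/\widetilde{\lambda}$: either the hypothesis $\lfloor w_n(\zeta)\rfloor + n \leq N$ fails (giving $w_n(\zeta) \geq n+m+1$) or it holds and forces $w_m(\zeta) \geq n+m+1 - N/\widetilde{\lambda}$; equating the resulting Bernik dimensions $(n+1)/(n+m+2)$ and $(m+1)/(w_m(\zeta)+1)$ and optimizing in $\alpha = n/N$ leads to the quadratic equation $B^2 - (2\widetilde{\lambda}-1)B + \widetilde{\lambda} = 0$, whose smaller root is precisely the right-hand side of \eqref{eq:balduin}.

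The main obstacle is the lower bound \eqref{eq:neuy}: since \eqref{eq:wiedefwoben} is only a $\liminf$ statement, the critical step is to extract from its proof a quantitative pointwise statement producing a specific $N = N(n)$ of order $w_0 n$ from the single hypothesis $w_n(\zeta) \geq w_0 n$. This careful pointwise reading of the parametric geometry of numbers argument behind Theorem~\ref{tnumber} is the delicate part; the upper bounds, by contrast, reduce routinely to an appropriate parameter choice in Theorem~\ref{juchu} or Corollary~\ref{juchurra} combined with Bernik's formula.
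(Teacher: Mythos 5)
Your proposal is correct and follows essentially the same route as the paper: for \eqref{eq:neuy} a pointwise reading of the parametric-geometry argument behind Theorem~\ref{tnumber} combined with Bernik's formula \eqref{eq:bern} (the paper carries out the error-term bookkeeping you flag as the delicate step, exactly as you describe); for \eqref{eq:baldda} and \eqref{eq:zusatz} the contrapositive of \eqref{eq:vor2} and of \eqref{eq:vor4} at $N=2n$, respectively, again fed into Bernik; and for \eqref{eq:balduin} the same two-case split via \eqref{eq:vor4}, with your parameterization $N=2n+m$ and the quadratic $B^{2}-(2\widetilde{\lambda}-1)B+\widetilde{\lambda}=0$ being a change of variables of the paper's $(\beta,\gamma)$ choice in \eqref{eq:saturnalien} leading to the identical final bound.
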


Observe that \eqref{eq:altegle} would only lead to a
lower bound of decay $O(N^{-1})$, instead of 
the absolute lower bound in \eqref{eq:neuy}.
Khintchine's transference principle \eqref{eq:khintchine}
combined with \eqref{eq:bern} admits no conclusion concerning
lower bounds for $h_{N}^{\theta_{N}}$ for large $N$,
and only yields an upper
bound of the form $1-O(N^{-1})$, 
reasonably weaker than \eqref{eq:baldda} for $\widetilde{\lambda}>2$,
and for 
$\widetilde{\lambda}=2$ slightly weaker than \eqref{eq:zusatz}. 
In both cases the implied 
constants depend on $\widetilde{\lambda}$ only.
For small parameters $\widetilde{\lambda}\in[1,\frac{3}{2})$, 
one readily checks that the bound \eqref{eq:neuy} is weaker 
than Beresnevich's bound \eqref{eq:beresn} as expected, unless
for $\widetilde{\lambda}=1$ when both equal $1$. The bound in
\eqref{eq:balduin} is stronger than \eqref{eq:baldda} for
parameters roughly in the interval
$\widetilde{\lambda}\in(2,3.5321\ldots)$. The bounds
in \eqref{eq:balduin} are larger than $\frac{1}{2}$
for any $\widetilde{\lambda}\geq 2$, whereas from \eqref{eq:beresn}
we expect $h_{N}^{\theta_{N}}=\frac{1}{2}$
for $\theta_{N}=3/(2N-1)$, which corresponds to
a parameter $\widetilde{\lambda}<\frac{3}{2}$ in the notation of
Theorem~\ref{dadthm}. However,
for $\widetilde{\lambda}>1.8+o(1)$ as $N\to\infty$, the bound
in \eqref{eq:baldda} is larger than the dimension formula 
in \eqref{eq:beresn} extended to the right. Thus for larger parameters \eqref{eq:beresn} can no longer represent the dimension formula 
for $h_{N}^{\lambda}$, 
as predicted for roughly $\widetilde{\lambda}\leq \frac{3}{2}$.

Now we investigate the case of fixed $\lambda>0$, and again aim
to derive asymptotic bounds for $h_{N}^{\lambda}$ as the
dimension $N$ grows.

\begin{theorem} \label{glanz}
Let $\lambda\in(0,1]$ be given. Then we have
\begin{equation} \label{eq:expre}
g_{N}^{\lambda}\leq  \frac{\lceil \lambda^{-1}\rceil+1}{N-2\lceil \lambda^{-1}\rceil+2}<\frac{\lambda^{-1}+2}{N}, 
\qquad\qquad \qquad N\geq 3\lceil \lambda^{-1}\rceil-1.
\end{equation}
Conversely, we have
\begin{equation}  \label{eq:stolt}
h_{N}^{\lambda} \geq 
\frac{(1+K)(1+\lambda-K\lambda)}{(N+1-K)(1+\lambda)},
\qquad\qquad K=\left\lfloor \frac{1+\lambda}{2\lambda}\right\rfloor, 
\quad  N\geq \left\lfloor\lambda^{-1}\right\rfloor.
\end{equation}
In particular, there exist 
positive constants $c_{1}(\lambda), c_{2}(\lambda)$ such that 
\[
\frac{c_{1}(\lambda)}{N} \leq h_{N}^{\lambda}\leq \frac{c_{2}(\lambda)}{N}, \qquad \qquad N\geq 1.
\]
\end{theorem}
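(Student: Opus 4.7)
The plan is to prove the two inequalities independently, handling upper and lower bounds by different techniques.

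For the upper bound in \eqref{eq:expre}, I would set $n_0 := \lceil \lambda^{-1}\rceil$, so that $1/n_0 \leq \lambda$. Any $\zeta \in G_N^{\lambda}$ satisfies $\lambda_N(\zeta) > \lambda \geq 1/n_0$, which contradicts the conclusion $\lambda_N(\zeta) \leq 1/n_0$ of estimate \eqref{eq:vor2} in Corollary~\ref{juchurra}. Consequently the hypothesis $N \geq \lceil w_{n_0}(\zeta)\rceil + 2n_0 - 1$ of \eqref{eq:vor2} must fail, forcing $\lceil w_{n_0}(\zeta)\rceil > N - 2n_0 + 1$, hence $w_{n_0}(\zeta) > N - 2n_0 + 1$. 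Thus $G_N^{\lambda}\subseteq\{\zeta:w_{n_0}(\zeta)>N-2n_0+1\}$, whose Hausdorff dimension is bounded above by $(n_0+1)/(N-2n_0+2)$ via Bernik's formula \eqref{eq:bern}. The hypothesis $N \geq 3\lceil\lambda^{-1}\rceil-1$ is exactly what is needed to ensure $N-2n_0+2 \geq n_0+1$, making the bound meaningful. The second strict inequality in \eqref{eq:expre} is routine algebra using $n_0 < \lambda^{-1}+1$.

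For the lower bound in \eqref{eq:stolt}, I would set $V := (N(1+\lambda)-K)/(1-(K-1)\lambda)$ and verify directly that $(K+1)/(V+1)$ equals the right side of \eqref{eq:stolt}. Bernik's formula \eqref{eq:bern} then gives $\dim\{\zeta:w_K(\zeta)\geq V\} = (K+1)/(V+1)$; it suffices to show $\{w_K(\zeta)\geq V\}\subseteq H_N^{\lambda}$ on a full-dimensional subset. For $K=1$ this reduces to \eqref{eq:altegle}, proved directly from \eqref{eq:herz} with $k=1$. For $K\geq 2$ (equivalently $\lambda \leq 1/3$) I would use approximation by algebraic numbers: a Bernik-typical $\zeta$ in $\{w_K \geq V\}$ admits infinitely many irreducible integer polynomials $P_k$ of degree exactly $K$ with $|P_k(\zeta)|\leq H(P_k)^{-V}$. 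The nearest real root $\alpha_k$ of $P_k$ is algebraic of degree $K$ with $|\zeta-\alpha_k|\ll H(\alpha_k)^{-V-1}$. By Schmidt's subspace theorem, $\lambda_N(\alpha_k) = 1/(K-1)$ for $N\geq K-1$, and the hypothesis $N \geq \lfloor\lambda^{-1}\rfloor$ combined with $K-1<1/\lambda$ guarantees this range. For any $X\leq H(\alpha_k)^{(V+1)/(1+\lambda)}$, a good integer vector $(x,y_1,\ldots,y_N)$ for $\alpha_k$ transfers to $\zeta$ with error $\max_i |x\zeta^i-y_i| \ll X|\zeta-\alpha_k| + X^{-1/(K-1)}\leq X^{-\lambda}$; the calibration $K=\lfloor(1+\lambda)/(2\lambda)\rfloor$ is chosen precisely so that $1/(K-1)\geq\lambda$. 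As $H(P_k)\to\infty$ we obtain unboundedly many admissible $X$, so $\lambda_N(\zeta)\geq\lambda$.

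The principal obstacle is the lower bound, specifically the standard but non-trivial refinement of Bernik's theorem: that the full Bernik dimension is realized by $\zeta$ admitting irreducible minimal polynomial approximants of degree exactly $K$ (rather than reducible ones whose roots might lie in a smaller-degree subfield, requiring separate bookkeeping). A more elementary attempt based on Khintchine's transference \eqref{eq:khintchine} combined with \eqref{eq:herz} fails to improve on \eqref{eq:altegle}, since the Khintchine lower bound for $\lambda_K$ in terms of $w_K$ is typically far from sharp on Bernik-extremal $\zeta$. The final ``in particular'' conclusion follows immediately by evaluating \eqref{eq:expre} and \eqref{eq:stolt} as $N\to\infty$ with $\lambda$ fixed.
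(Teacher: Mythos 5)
Your argument for the upper bound \eqref{eq:expre} is exactly the paper's: from \eqref{eq:vor2} deduce the inclusion $G_N^\lambda \subseteq T_{n_0}^{N-2n_0+1}$ and apply Bernik's formula \eqref{eq:bern}. That part is fine.

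Your lower bound argument, however, departs from the paper and contains a genuine gap. The paper proves the same inclusion $T_K^V \subseteq H_N^\lambda$ (with your $V$ agreeing with its $Cn$) but does so via the inequality \eqref{eq:bratschi}, which is an \emph{unconditional} consequence of Proposition~\ref{wpropos} and the Schmidt--Summerer framework: lifting one good polynomial of degree $\leq K$ by multiplication with $1,T,\dots,T^{m}$ bounds $w_{m+K,m+1}(\zeta)$ from below, and the duality \eqref{eq:jaja}, \eqref{eq:duales}, \eqref{eq:umrechnen}, \eqref{eq:umrechnen2} converts this directly into a lower bound on $\lambda_{m+K}(\zeta)$ — no irreducibility, no passage to algebraic numbers, no implicit constants. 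Your route instead produces algebraic approximants $\alpha_k$ of degree $K$ and transfers good vectors for $\alpha_k$ to $\zeta$. Here the claimed transfer error $\max_i |x\alpha_k^i - y_i| \ll X^{-1/(K-1)}$ is not correct as stated: a good vector for the truncated tuple $(\alpha_k,\dots,\alpha_k^{K-1})$ produced by Dirichlet must be extended to the powers $\alpha_k^K,\dots,\alpha_k^N$ using the minimal polynomial relation, and this introduces a factor of order $H(\alpha_k)^{N-K+1}$ in the worst component (the coefficients expressing $\alpha_k^i$ in terms of $1,\alpha_k,\dots,\alpha_k^{K-1}$ grow like $H(\alpha_k)^{i-K+1}$). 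When you feed this missing factor into the constraint $X\leq H(\alpha_k)^{(V+1)/(1+\lambda)}$, the two requirements balance only for $K\leq 2$ and are incompatible for $K\geq 3$, i.e.\ precisely for $\lambda\leq 1/5$. So as written the argument does not prove \eqref{eq:stolt} in that range. Your calibration of $K$ by the single condition $1/(K-1)\geq\lambda$ is therefore insufficient; the paper's choice of $n=\lfloor(1+\lambda)/(2\lambda)\rfloor$ is instead extracted by optimizing the closed-form bound \eqref{eq:schauher}, which has no hidden $H(\alpha_k)$ dependence.

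You do correctly flag the irreducibility/Bernik-lower-bound refinement as a further gap, but note that the paper's proof sidesteps that entirely as well: \eqref{eq:bratschi} applies to every $\zeta$ with $w_K(\zeta)\geq V$, not just to those with irreducible degree-$K$ approximants, so the inclusion $T_K^V\subseteq H_N^\lambda$ holds without modification and Bernik's formula can be cited directly. I would recommend abandoning the algebraic-number route here and instead reversing the parametric-geometry argument as in the proof of Theorem~\ref{tnumber}.
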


The bound in \eqref{eq:expre} is good when $n$ is 
large compared to $\lambda^{-1}$.
For parameters $\lambda\in(\frac{1}{3},1]$, the bound
in \eqref{eq:stolt} coincides with \eqref{eq:altegle},
for $\lambda\leq \frac{1}{3}$ (hence $N\geq 3$) it provides
a strict improvement. It is discontinuous 
for $\lambda$ a reciprocal of an odd positive integer. 
For fixed $N$, a refined treatment 
leads to a slightly better continuous bound.
Moreover, similar to \eqref{eq:balduin} we can derive
effective piecewise constant upper bounds for fixed $N$.

\begin{theorem} \label{thmmet}
Let $N\geq 2$ be an integer. 
Define the intervals $I_{1}=[\frac{N+2}{3N},\infty)$ and
\[
I_{n}=\left[\frac{N+2}{N+2Nn+n-n^2},\frac{N+2}{N+2(n-1)N+(n-1)-(n-1)^2}\right), \quad 2\leq n\leq N.
\]
Then $I_{1},\ldots,I_{N}$ form a partition of $[\frac{1}{N},\infty)$
and we have
\begin{equation} \label{eq:tritt}
h_{N}^{\lambda} \geq \frac{(1+n)(1+\lambda-n\lambda)}{(1+\lambda)(N+1-n)},
\qquad\qquad \lambda\in{I_{n}}.
\end{equation}
The bound coincides with \eqref{eq:altegle} for $\lambda\in{I_{1}}$
and is strictly larger if 
otherwise $\lambda\in[\frac{1}{N},\frac{1}{3}+\frac{2}{3N})$.

Conversely, for $N/n\in(2,3)$ we have
\begin{equation} \label{eq:freitag}
g_{N}^{\lambda}\leq \frac{n+1}{N-n+1}, \qquad \text{if} \quad
\lambda\geq \frac{1}{(1-\gamma)N+(2\gamma-1)n},
\end{equation}
where 
\[
\gamma= \frac{N^{2}+2n^{2}-3Nn+2N-4n}{(n+1)(N-2n)}.
\]
\end{theorem}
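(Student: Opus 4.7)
The plan is to prove the partition, the lower bound \eqref{eq:tritt}, and the upper bound \eqref{eq:freitag} separately. For the partition, direct computation suffices: the right endpoint of $I_n$ and the left endpoint of $I_{n-1}$ share the same defining expression once the index shift is unfolded, while the left endpoint of $I_N$ reduces to $(N+2)/(N+2N^2+N-N^2)=(N+2)/(N(N+2))=1/N$, and $I_1$ extends to $\infty$, so the intervals tile $[1/N,\infty)$ as claimed.

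For the lower bound \eqref{eq:tritt}, I observe that the displayed formula is obtained from the bound \eqref{eq:stolt} in Theorem~\ref{glanz} by replacing the single value $K=\lfloor(1+\lambda)/(2\lambda)\rfloor$ with a free integer parameter $n$. My plan is therefore to rerun the construction behind \eqref{eq:stolt} separately for each admissible $n\in\{1,\ldots,N\}$: consider the Bernik level set $\{\zeta:w_n(\zeta)\geq w\}$ with $w=(N(1+\lambda)-n)/(1-(n-1)\lambda)$, the value arranged so Bernik's formula \eqref{eq:bern} gives dimension $(n+1)/(w+1)$ equal to the target bound, and show this set lies inside $H_N^\lambda$ by chaining the transference \eqref{eq:khintchine} with \eqref{eq:herz} at multiplier $m=\lceil N/n\rceil$ and the monotonicity \eqref{eq:reihenfolge}. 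The construction is meaningful exactly when $\lambda\leq 1/(n-1)$ (so the Bernik exponent $w$ is admissible) and $\lambda\geq 1/N$; requiring in addition that $n$ maximizes the resulting dimension estimate among $\{1,\ldots,N\}$ cuts $[1/N,\infty)$ into the intervals $I_1,\ldots,I_N$. The comparison with \eqref{eq:altegle} then reduces to an elementary inequality: equality for $\lambda\in I_1$, strict improvement on the union $I_2\cup\cdots\cup I_N$.

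For the upper bound \eqref{eq:freitag}, I would invoke \eqref{eq:vor1} from Theorem~\ref{juchu}: if $\lambda_N(\zeta)>\lambda$, then either $\widehat{w}_n(\zeta)<1/\lambda$ or $\widehat{w}_{N-n+1}(\zeta)-w_n(\zeta)<1/\lambda$. Using the elementary bounds $\widehat{w}_n(\zeta)\geq n$ and $\widehat{w}_{N-n+1}(\zeta)\geq N-n+1$ from \eqref{eq:wmono}, together with the hypothesis $\lambda\geq 1/((1-\gamma)N+(2\gamma-1)n)$ with the given optimizer $\gamma$, each of the two alternatives forces $w_n(\zeta)\geq N-n$; the specific value of $\gamma$ is precisely that which simultaneously collapses both branches to this conclusion. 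Bernik's formula \eqref{eq:bern} then yields $g_N^\lambda\leq\dim\{\zeta:w_n(\zeta)\geq N-n\}=(n+1)/(N-n+1)$.

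The main obstacle is twofold. For the lower bound, one must verify that the Khintchine--\eqref{eq:herz} chain delivers the precise algebraic formula, which requires the closed form of $w$ to sit in the admissible range $w\geq n$ throughout $I_n$ and requires confirming that the construction's boundary cases match the partition at the endpoints of $I_n$; this is where the discreteness in the definition of $I_n$ arises. For the upper bound, the hard part will be identifying the explicit optimizer $\gamma$ from the two branches of the maximum in \eqref{eq:vor1}: the hypothesis $N/n\in(2,3)$ ensures that the balance point falls in a regime where a single uniform reduction to $w_n(\zeta)\geq N-n$ is possible, whereas outside this range a different combination of the bounds \eqref{eq:vor1} and \eqref{eq:wmono} would dominate and yield a different conclusion.
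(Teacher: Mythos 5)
Your proposal diverges from the paper's proof in ways that are not just stylistic but substantive, and both halves run into trouble.

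For the lower bound \eqref{eq:tritt}, you propose to chain Khintchine's transference \eqref{eq:khintchine} with the scaling inequality \eqref{eq:herz} at multiplier $m=\lceil N/n\rceil$. This cannot reproduce the stated bound. Khintchine gives $\lambda_{n}(\zeta)\geq w/((n-1)w+n)$, which for $n\geq 2$ and $w\geq n$ is strictly less than $1$; feeding this into \eqref{eq:herz} with $m\geq 2$ produces $\lambda_{mn}(\zeta)\geq(\lambda_{n}(\zeta)-m+1)/m$, whose numerator is already negative. A concrete check: for $N=4$, $n=2$, $m=2$ and the hypothesis $w_{2}(\zeta)\geq w$, your chain yields $\lambda_{4}(\zeta)\geq -1/(w+2)$, a vacuous bound, whereas \eqref{eq:tritt} with $n=2$ asserts something nontrivial. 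The paper instead uses the parametric geometry of numbers: Proposition~\ref{wpropos} lifts a good polynomial of degree $n$ and bounded height into $m+1$ linearly independent polynomials of degree $m+n$ by multiplying with $1,T,\ldots,T^{m}$, giving control of the $(m+1)$-th successive minimum exponent $w_{m+n,m+1}(\zeta)$; the Schmidt--Summerer inequalities \eqref{eq:duales}, \eqref{eq:jaja}, \eqref{eq:umrechnen}, \eqref{eq:umrechnen2} then deliver the \emph{additive-index} bound \eqref{eq:bratschi}, $\lambda_{m+n}(\zeta)\geq(Cn-m)/(m+n(1+C(n-1)))$, which rearranges to exactly \eqref{eq:schauher} and hence to \eqref{eq:tritt} on the interval $I_{n}$. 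The additive shift $N=m+n$ (not multiplicative $N=mn$) is what makes the formula land; the Khintchine--\eqref{eq:herz} route structurally cannot get there.

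For the upper bound \eqref{eq:freitag}, you invoke \eqref{eq:vor1} and claim both branches of the max force $w_{n}(\zeta)\geq N-n$. That reduction is not correct: the first branch $\widehat{w}_{n}(\zeta)<1/\lambda$ tells you nothing about $w_{n}(\zeta)$ and is merely vacuous when $\lambda\geq 1/n$, and the second branch with $\widehat{w}_{N-n+1}(\zeta)\geq N-n+1$ gives $w_{n}(\zeta)>N-n+1-1/\lambda$, which needs $\lambda\geq 1$ to reach your target. The paper instead invokes the sharper estimate \eqref{eq:vor4} (valid precisely because $N/n\in(2,3)$), whose second branch subtracts $w_{N-2n}(\zeta)$ at the \emph{smaller} index $N-2n$. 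Running the argument of \eqref{eq:balduin}, $G_{N}^{\lambda}$ is covered by the union of two Bernik sets $\mathcal{A}=T_{n}^{(\beta-1)n}$ and $\mathcal{B}=T_{(\beta-2)n}^{\gamma(\beta-2)n}$ from \eqref{eq:aandb} with $\beta=N/n$; the stated $\gamma$ is the value that equates the exact Bernik dimensions $(n+1)/((\beta-1)n+1)$ and $((\beta-2)n+1)/(\gamma(\beta-2)n+1)$, giving $(n+1)/(N-n+1)$. A one-set argument from \eqref{eq:vor1} does not produce this $\gamma$, and the hypothesis $N/n\in(2,3)$ exists exactly so \eqref{eq:vor4} and the two-set covering apply.
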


Let $N\geq 4$. One checks that then we have
$\frac{1}{3}+\frac{2}{3N}>\frac{3}{2N-1}$. 
The bound of \eqref{eq:beresn} does not apply in the interval
$J_{N}=(\frac{3}{2N-1},\frac{1}{3}+\frac{2}{3N})$ and
hence \eqref{eq:tritt} provides the best known lower 
bound for $h_{N}^{\lambda}$ in $J_{N}$.

For $N=11$, the different bounds are illustrated by the Mathematica plots Figure~1 and Figure~2. 
The red curve depicting our new 
lower bounds is piecewise a rational function, 
which coincides with the green curve illustrating \eqref{eq:altegle} 
for $\lambda\geq 13/33=0.3939\ldots$, and exceeds 
it for smaller $\lambda$. Beresnevich's bound in blue decays almost linearly in its valid interval. Its continuation would exceed 
the red curve roughly up to $\lambda=0.1692\ldots$.
The piecewise constant gray line depicting upper bounds is
derived from \eqref{eq:freitag} with the choice $n=5$
in the interval $\lambda\in[0.2143,1/3]$, and from \eqref{eq:expre}
for $\lambda\geq 1/3$,
with discontinuities at $\lambda=0.2143\ldots$,
$\lambda=1/3$ and $\lambda=1/2$. We extended it to the 
interval $[1/11,0.2143\ldots]$ by the trivial bound $1$.

\begin{figure}
	\centering
\includegraphics[width=0.5\textwidth,natwidth=750,natheight=650]{specgraphik4.eps}
	\caption{Lower bounds for $h_{11}^{\lambda}$: Blue: Beresnevich's lower bound  \eqref{eq:beresn} in the valid interval $\lambda\in[\frac{1}{11},\frac{1}{7}]$. 
	Red: Our lower bound \eqref{eq:tritt} in the sample interval $\lambda\in[\frac{1}{11},0.225]$.
	Green: The lower bound \eqref{eq:altegle} in $\lambda\in[\frac{1}{11},0.225]$.}
\end{figure}

\begin{figure}[h!]
	\centering
	\includegraphics[width=0.5\textwidth,natwidth=750,natheight=650]{specgraphic_0323.eps}
	\caption{Bounds for $h_{11}^{\lambda}$: Blue, red, green as in Figure~1 in the interval $\lambda\in[\frac{1}{11},1]$. 
	Gray: Our upper bounds derived from \eqref{eq:expre}
	and \eqref{eq:freitag}.}
\end{figure}

Finally we remark that from Theorem~\ref{difizil} 
and \eqref{eq:bern} we may
derive very similar metric results on the dimensions of sets like
\[
r_{N}^{w}:= 
\dim(R_{N}^{w}), \qquad R_{N}^{w}:=\{ \zeta\in\mathbb{R}: \widehat{w}_{N}^{\ast}(\zeta)\leq w\}
\qquad \qquad N\geq 1, \; w\geq 1.
\]
We only state the particular consequence 
that for every fixed $\delta>0$ we have
$r_{N}^{N-\delta}\geq \frac{1}{4}-o(1)$ as $N\to\infty$.
Conversely if we fix $N$ and consider $r_{N}^{N-\delta}$
as $\delta\to 0$, the limit should be one 
provided that $w\mapsto r_{N}^{w}$ is continuous 
at $w=N$.

\section{Proofs} \label{profs}

\subsection{Deduction of the equivalence principles}
First we deduce Theorem~\ref{nabum} from the partial results
in Section~\ref{refin}.

\begin{proof}[Proof of Theorem~\ref{nabum}]
Theorem~\ref{unumber} shows that any $U_{m}$-number satisfies
\[
\lim_{n\to\infty} \lambda_{n}(\zeta)\geq \frac{1}{m-1}>0.
\]
On the other hand in Corollary~\ref{juchurra} we noticed that otherwise if 
$\zeta$ is no $U$-number, then $\lim_{n\to\infty} \lambda_{n}(\zeta)=0$.
Moreover, when $\zeta$ is a $U_{m}$-number, then $w_{m-1}(\zeta)<\infty$ and again Corollary~\ref{juchurra} yields that
we actually have $\lambda_{n}(\zeta)\leq \frac{1}{m-1}$ for large $n$, 
so by the above observation there must be equality. 
In Theorem~\ref{tnumber} we proved that for $T$-numbers we have 
$\limsup_{n\to\infty} n\lambda_{n}(\zeta)=\infty$, and
$\lim_{n\to\infty}\lambda_{n}(\zeta)=0$ was shown above.
Finally the claim for $S$-numbers was noticed in 
Corollary~\ref{juchurra} as well.
\end{proof}

We now settle the second equivalence principle Theorem~\ref{dadkor}
and Theorem~\ref{difizil}.
Lower bounds for $\overline{\widehat{w}}^{\ast}(\zeta)$
are based on Theorem~\ref{nabum} and 
the relations
\begin{equation} \label{eq:besides}
\widehat{w}_{n}^{\ast}(\zeta)\geq \frac{1}{\lambda_{n}(\zeta)}, \qquad \qquad w_{n}^{\ast}(\zeta)\geq \frac{1}{\widehat{\lambda}_{n}(\zeta)},
\end{equation}
see~\cite{davsh} and~\cite{j2}. For upper bounds we
employ recent results from~\cite{bschlei}.

\begin{proof}[Proof of Theorem~\ref{dadkor}]
By \eqref{eq:besides} and Theorem~\ref{nabum}, for \eqref{eq:limites}
to hold, $\zeta$ must be a $U$-number. 
The refined result on $U_{m}$-numbers
in Theorem~\ref{nabum} moreover implies that for $\zeta$ a $U_{m}$-number we have
$\widehat{w}_{n}^{\ast}(\zeta)\geq m-1$ for large $n$. On the other hand,
it was shown in~\cite[Corollary~2.5]{bschlei} that for any $U_{m}$-number we have $\widehat{w}_{n}^{\ast}(\zeta)\leq m$ for all $n\geq 1$.
The above implies the left property of \eqref{eq:tnu} for
$S$ and $T$-numbers. We next prove the right claim in \eqref{eq:tnu}
for $T$-numbers. It was shown
in~\cite[Theorem~2.4]{bschlei} that for $m,n$ positive integers
the estimate $w_{m}(\zeta)>m+n-1$ implies
\begin{equation} \label{eq:traene}
\widehat{w}_{n}^{\ast}(\zeta)\leq 
m+(n-1)\frac{\widehat{w}_{n}^{\ast}(\zeta)}{w_{m}(\zeta)}.
\end{equation}
For a $T$-number $\zeta$ and every integer $N$ we have
$w_{m}(\zeta)\geq N^{2}m$ for some $m$. If we choose $n=Nm$, then 
the condition $w_{m}(\zeta)>m+n-1$ is satisfied when $N\geq 2$.
From \eqref{eq:bound2} and \eqref{eq:traene} we infer
\[
\widehat{w}_{mN}^{\ast}(\zeta)\leq m+\frac{2(mN)^{2}}{N^{2}m}
\leq 3m.
\]
Hence indeed $\widehat{w}_{n}^{\ast}(\zeta)/n=\widehat{w}_{mN}^{\ast}(\zeta)/(mN)\leq 3/N$ which tends to $0$
as $N\to\infty$. Finally, for $S$-numbers we derive
$\widehat{w}_{n}^{\ast}(\zeta)\gg n$ from \eqref{eq:besides} and 
Theorem~\ref{nabum}, the converse follows from above.
\end{proof}

\begin{remark}
Besides \eqref{eq:besides}, 
the inequalities
\begin{equation} \label{eq:najo}
\widehat{w}_{n}^{\ast}(\zeta)\geq \frac{w_{n}(\zeta)}{w_{n}(\zeta)-n+1}, \qquad w_{n}^{\ast}(\zeta)\geq 
\frac{\widehat{w}_{n}(\zeta)}{\widehat{w}_{n}(\zeta)-n+1}
\end{equation}
linking $w_{n}^{\ast}(\zeta)$ and $\widehat{w}_{n}^{\ast}(\zeta)$ 
with other classical exponents due to
Bugeaud and Laurent~\cite{buglaur} are known.
However, \eqref{eq:najo}
implies $\lim_{n\to\infty} \widehat{w}_{n}^{\ast}(\zeta)=\infty$
only upon the considerably stronger
condition $\liminf_{n\to\infty} w_{n}(\zeta)/n=1$. Similarly,
a uniform lower bound for the quantities 
$\widehat{w}_{n}^{\ast}(\zeta)/n$ would require a uniform upper bound on 
$w_{n}(\zeta)-n$ instead of $w_{n}(\zeta)/n$.
\end{remark}

We remark that we can obtain the variants of Theorem~\ref{dadkor}
mentioned in Remark~\ref{reeh}
by considering the corresponding variants of \eqref{eq:besides}.
Again relation \eqref{eq:besides} and a refined treatment of the argument 
for $T$-numbers leads to a proof of Theorem~\ref{difizil}.

\begin{proof}[Proof of Theorem~\ref{difizil}]
The respective left inequalities in \eqref{eq:hum} 
and \eqref{eq:bug} and $\theta(\zeta)\geq \tau(\zeta)^{-1}$ 
follow immediately from Theorem~\ref{genau}
and \eqref{eq:besides}. 
Concerning the respective right inequalities, the estimates 
$\overline{\widehat{w}}^{\ast}(\zeta)\leq \overline{\widehat{w}}(\zeta)
\leq \overline{w}(\zeta)$ and $\underline{\widehat{w}}^{\ast}(\zeta)\leq \underline{\widehat{w}}(\zeta)
\leq \underline{w}(\zeta)$
are an easy consequence of \eqref{eq:wmono} and \eqref{eq:bound2}. 
For the remaining bounds, we refine the argument 
for $T$-numbers in the proof 
of Theorem~\ref{dadkor}. We may assume 
$\overline{w}(\zeta)>2$ and $\underline{w}(\zeta)>2$ respectively, 
otherwise the left bounds are smaller and the claim is obvious.
So assume $\alpha>2$ is a fixed real number and 
$m$ is a large integer such that
$w_{m}(\zeta)/m>\alpha$. If $n$ is another integer
and we define $\beta= n/m$, then 
in the case of $\beta\leq \alpha-1$ the condition 
$w_{m}(\zeta)>m+n-1$ of
\eqref{eq:traene}  is satisfied. Its application and rearrangements 
yield
\[
\widehat{w}_{n}^{\ast}(\zeta)\leq 
\frac{\alpha}{\alpha-\beta} m.
\]
Dividing by $n=\beta m$ yields
\[
\frac{\widehat{w}_{n}^{\ast}(\zeta)}{n}\leq 
\frac{\alpha}{(\alpha-\beta)\beta}.
\]
Let $n=\lfloor m\alpha/2\rfloor$. Then
$\beta=n/m=\alpha/2+O(1/m)$. Hence,
since for $\alpha>2$ we have $\alpha/2<\alpha-1$, 
the above condition $\beta\leq \alpha-1$ is satisfied for large $m$.
By inserting we obtain the upper bound
$4/\alpha+O(1/m)$ for $\widehat{w}_{n}^{\ast}(\zeta)/n$.
By definition we may choose $\alpha$ arbitrarily close to 
$\overline{w}$ for certain arbitrarily large $m$, and 
to $\underline{w}$ for all large $m$, respectively. The claims 
\eqref{eq:hum} 
and \eqref{eq:bug} follow. Finally we show $\theta(\zeta)\leq \tau(\zeta)^{-1}$ to settle \eqref{eq:hadschibratschi}.
Let $\epsilon>0$ and assume $w_{m}(\zeta)\geq m^{\gamma}$
for some $\gamma>1$. 
Let $n=m^{\gamma-\epsilon}$ 
and observe that again the condition $w_{m}(\zeta)>m+n-1$ is satisfied
for large $m$. Thus by \eqref{eq:traene}, again for 
large enough $m\geq m_{0}(\epsilon)$, we infer
\[
\widehat{w}_{n}^{\ast}(\zeta)\leq 
\frac{m^{\gamma+1}}{m^{\gamma}-m^{\gamma-\epsilon}+1}
\leq 2m= 2 n^{1/(\gamma-\epsilon)}.
\]
Hence taking logarithms to base $n$ gives $\theta(\zeta)\leq \tau(\zeta)^{-1}$
as $\gamma$ can be chosen arbitrarily close to $\tau(\zeta)$ 
for certain arbitrarily large $m$ and
$\epsilon$ arbitrarily small.
\end{proof}

We place the proof of Theorem~\ref{genau} in Section~\ref{kummtscho}
as it requires some partial results of the proof of Theorem~\ref{tnumber}.

\subsection{Proofs of the upper bounds}

Next we show Theorem~\ref{juchu}. The proof is
similar to~\cite[Theorem~2.1]{ichrelations}. Recall the
successive minima exponents defined in Section~\ref{a}.
As noticed in~\cite{j2}, Mahler's duality can be formulated in the way
\begin{equation} \label{eq:mahla}
\lambda_{n,j}(\zeta)= \frac{1}{\widehat{w}_{n,n+2-j}(\zeta)}, \qquad \widehat{\lambda}_{n,j}(\zeta)= \frac{1}{w_{n,n+2-j}(\zeta)},
\end{equation}
with the successive minima exponents defined below 
Theorem~\ref{unumber}. Indeed, our proof for the
upper bounds for $\lambda_{N}(\zeta)$
are based on controlling the uniform last successive minimum
exponent of the dual problem $\widehat{w}_{N,N+1}(\zeta)$, for 
suitable $N$. In fact the dual concept is underlying
any proof of upper bounds for $\widehat{\lambda}_{n}(\zeta)$,
where control of $w_{n,n+1}(\zeta)$ leads to
bounds for $\widehat{\lambda}_{n}(\zeta)$.
We also recall Gelfond's Lemma, asserting that
\begin{equation} \label{eq:wirsing}
H(P)H(Q) \ll_{n} H(PQ) \ll_{n} H(P)H(Q)
\end{equation}
holds for any polynomials $P,Q$ each of degree at most $n$.

\begin{proof}[Proof of Theorem~\ref{juchu}]
Let $n,\zeta$ be as in the theorem and $\epsilon>0$. 
By definition of $\widehat{w}_{n}(\zeta)$, for any 
large $X\geq X_{0}(\epsilon)$ there
exists an integer polynomial $P_{X}$ of degree at most $n$ such that
\[
H(P_{X})\leq X, \qquad \vert P_{X}(\zeta)\vert \leq X^{-\widehat{w}_{n}(\zeta)+\epsilon}.
\]
Now choose an integer $k\geq w_{n}(\zeta)$. The definition 
of $\widehat{w}_{k}(\zeta)$ similarly yields
an integer polynomial $Q_{X}$ of degree at most $k$ such that 
\begin{equation} \label{eq:hoert}
H(Q_{X})\leq X, \qquad \vert Q_{X}(\zeta)\vert 
\leq X^{-\widehat{w}_{k}(\zeta)+\epsilon}.
\end{equation}
Write $Q_{X}= R_{X}S_{X}$, where $R_{X}$ consists of the factors dividing $P_{X}$ as well, 
and $S_{X}$ is coprime to $P_{X}$. Let $\epsilon>0$.
We claim that, unless $R_{X}$ is of small height $H(R_{X})\ll_{\epsilon} 1$, we have
\begin{equation} \label{eq:hoerthoert}
\vert R_{X}(\zeta)\vert \geq H(R_{X})^{-w_{n}(\zeta)-\epsilon}\gg_{k,\zeta} X^{-w_{n}(\zeta)-\epsilon},
\end{equation}
if $X$ was chosen sufficiently large. First notice that the corresponding estimate
\begin{equation} \label{eq:obtain}
\vert U_{X}(\zeta)\vert \geq H(U_{X})^{-w_{n}(\zeta)-\epsilon},
\end{equation}
applies to any irreducible factor $U_{X}$ of $R_{X}$. 
Indeed, such $U_{X}$ has degree at most $n$ as it also divides $P_{X}$, 
and by definition of $w_{n}(\zeta)$ we obtain \eqref{eq:obtain}.
From \eqref{eq:wirsing} we see that this property is essentially (up to a factor depending on $k$ only)
preserved when taking arbitrary products, 
more precisely
\begin{equation} \label{eq:hoerthoerti}
\vert R_{X}(\zeta)\vert \geq H(R_{X})^{-w_{n}(\zeta)-\epsilon}\gg_{k,\zeta} X^{-w_{n}(\zeta)-\epsilon}.
\end{equation}
%
In case of $R_{X}$ of small height $H(R_{X})\ll_{\epsilon} 1$, we can even estimate 
$\vert R_{X}(\zeta)\vert\gg_{n,\zeta} 1$ by the finiteness and 
since $\zeta$ is transcendental. 
From \eqref{eq:hoert} and \eqref{eq:hoerthoert} we deduce
\[
\vert S_{X}(\zeta)\vert = \frac{\vert Q_{X}(\zeta)\vert }{\vert R_{X}(\zeta)\vert } \leq X^{-\widehat{w}_{k}(\zeta)+w_{n}(\zeta)+2\epsilon}.
\]
Moreover, since $S_{X}$ divides $Q_{X}$, Gelfond's estimate \eqref{eq:wirsing} implies $H(S_{X})\ll_{k} H(Q_{X})\leq X$. Hence we have
\begin{equation} \label{eq:inview}
\max\{H(P_{X}), H(S_{X})\}\ll_{k} X, \qquad  \max\{ \vert P_{X}(\zeta)\vert, \vert S_{X}(\zeta)\vert\} \leq X^{-\theta_{k,n}+2\epsilon},
\end{equation}
with
\[
\theta_{k,n}= \min \{ \widehat{w}_{n}(\zeta), \widehat{w}_{k}(\zeta)-w_{n}(\zeta) \}.
\]
Let $d_{X}=d\leq n$ be the degree of $P_{X}$ and $e_{X}=e\leq k$ be the degree of $S_{X}$.
Then, since $P_{X}$ and $Q_{X}$ are coprime, the set of polynomials
\[
\mathscr{P}_{X}:= \{ P_{X}, TP_{X},\ldots, T^{e-1}P_{X}, S_{X},TS_{X}, \ldots, T^{d-1}S_{X}\}
\]
is linearly independent and spans the space of polynomials of 
degree at most $d+e-1\leq k+n-1$.
In case of strict inequality
$d+e-1<k+n-1$ for some $X$, we consider
\[
\mathscr{R}_{X}= \mathscr{P}_{X}\cup \{T^{d}S_{X},T^{d+1}S_{X},\ldots,T^{k+n-1-e}S_{X}\}
\]
instead of $\mathscr{P}_{X}$ (see also the proof 
of Proposition~\ref{wpropos} below).
Clearly $\mathscr{R}_{X}$ is linearly independent as well, 
and spans the space of polynomial of degree at most $N:=k+n-1$.
In any case,
in view of \eqref{eq:inview} and since $X$ was arbitrary and we may choose $\epsilon$ arbitrarily small,
this means
\[
\widehat{w}_{N,N+1}(\zeta)\geq \theta_{k,n}
=\min\{ \widehat{w}_{n}(\zeta), \widehat{w}_{N-n+1}(\zeta)-w_{n}(\zeta) \}.
\]
Since $\theta_{k,n}>0$ by construction,
Mahler's relation \eqref{eq:mahla} with $j=1$ further implies $\lambda_{N}(\zeta)\leq 1/\theta_{k,n}$. We may choose 
any integer $k> w_{n}(\zeta)$, and the choice
$k=\lceil w_{n}(\zeta)\rceil$ yields
$N=n+k-1=\lceil w_{n}(\zeta)\rceil+n-1$.
The claim \eqref{eq:vor1} follows. 

Now we prove \eqref{eq:vor4}. We now choose an integer
$k$ with strict 
inequality $k>w_{n}(\zeta)$, and again obtain \eqref{eq:hoert}
for some $Q_{X}$ of degree at most $k$ for any $X\geq X_{0}(\epsilon)$. 
We proceed as above splitting
$Q_{X}=R_{X}S_{X}$. By a very similar argument as above,
from \eqref{eq:wirsing}     
we derive that $Q_{X}$ cannot split solely in
irreducible polynomials of degree at most $n$. Thus it
must have an irreducible factor
of degree at least $n+1$, which must divide $S_{X}$. 
Hence $R_{X}= Q_{X}/S_{X}$ has degree
at most $k-(n+1)$. In particular if $w_{n}(\zeta)<n+1$, for $k=n+1$
we infer $S_{X}= Q_{X}$ and $R_{X}\equiv 1$ for all large $X$.
From the definition of $w_{k-n-1}$ for sufficiently large $H(R_{X})$ 
we derive
\begin{equation} \label{eq:hoerthoertii}
\vert R_{X}(\zeta)\vert \geq H(R_{X})^{-w_{k-n-1}(\zeta)-\epsilon}\gg_{k,\zeta} X^{-w_{k-n-1}(\zeta)-\epsilon}.
\end{equation}
In case of small heights of $R_{X}$ we use the argument 
from the proof of \eqref{eq:vor1} again.
We infer \eqref{eq:inview} very similarly as above
with $\theta_{k,n}$ replaced by the new expression
\[
\tilde{\theta}_{k,n}= \min \{ \widehat{w}_{n}(\zeta), 
\widehat{w}_{k}(\zeta)-w_{k-n-1}(\zeta) \}.
\]
Let $N=k+n-1$ again, proceeding as above yields
\[
\widehat{w}_{N,N+1}(\zeta)\geq \tilde{\theta}_{k,n}
=\min\{ \widehat{w}_{n}(\zeta), \widehat{w}_{N-n+1}(\zeta)-w_{N-2n}(\zeta) \}.
\]
We may start with
any integer $k> w_{n}(\zeta)$, or equivalently
$k\geq \lfloor w_{n}(\zeta)\rfloor+1$, which leads 
to $N\geq \lfloor w_{n}(\zeta)\rfloor+n$.
The claim \eqref{eq:vor4} follows from \eqref{eq:mahla} again
as soon as $\tilde{\theta}_{k,n}>0$, which we can
guarantee for $N\leq 3n$ by construction. The condition
$w_{n}<2n-1$ is only required for the set of values $N$ 
in \eqref{eq:vor4} to be non-empty.
\end{proof}

\subsection{Parametric geometry of numbers}

The proofs of Section~\ref{a} and Section~\ref{spectrum} 
can be derived in a convenient, and in fact surprisingly easy way,
utilizing the parametric geometry
of numbers introduced by Schmidt and Summerer~\cite{ss}. We 
recall the fundamental concepts, in a slightly modified form to
fit our purposes. In particular 
we restrict to successive powers of a number.
Let $\zeta\in{\mathbb{R}}$ be given and
$Q>1$ a parameter. For $n\geq 1$ and $1\leq j\leq n+1$
define $\psi_{n,j}(Q)$ as the minimum of $\eta\in{\mathbb{R}}$ such that
\begin{equation} \label{eq:droben}
\vert x\vert \leq Q^{1+\eta}, \qquad \max_{1\leq j\leq n} \vert \zeta^{j}x-y_{j}\vert\leq Q^{-\frac{1}{n}+\eta} 
\end{equation}
has $j$ linearly independent solution vectors $(x,y_{1},\ldots,y_{n})\in{\mathbb{Z}^{n+1}}$. 
The functions $\psi_{n,j}(Q)$ can be equivalently defined via a lattice point problem, see~\cite{ss}. As pointed out in~\cite{ss}
they have the properties
\begin{equation}  \label{eq:heut}
-1\leq \psi_{n,1}(Q)\leq \psi_{n,2}(Q)\leq \cdots 
\leq \psi_{n,n+1}(Q)\leq \frac{1}{n}, \qquad\qquad Q>1.
\end{equation}
Let 
\begin{equation}  \label{eq:fest1}
\underline{\psi}_{n,j}=\liminf_{Q\to\infty} \psi_{n,j}(Q),
\qquad \overline{\psi}_{n,j}=\limsup_{Q\to\infty} \psi_{n,j}(Q).
\end{equation}
These values all belong to the interval $[-1,1/n]$ by \eqref{eq:heut}. 
From Dirichlet's Theorem it follows that
$\psi_{n,1}(Q)<0$ for all $Q>1$ and hence $\underline{\psi}_{n,1}\leq 0$.
Similarly, for $1\leq j\leq n+1$ define the 
functions $\psi_{n,j}^{\ast}(Q)$ as the infimum of $\eta$ 
such that
\begin{equation}  \label{eq:char}
H(P)\leq Q^{\frac{1}{n}+\eta}, \qquad \vert P(\zeta)\vert \leq Q^{-1+\eta}
\end{equation}
have $j$ linearly independent integer polynomial solutions
$P$ of degree at most $n$.
%
%
Again put
\begin{equation} \label{eq:fest2}
\underline{\psi}_{n,j}^{\ast}=\liminf_{Q\to\infty} \psi_{n,j}^{\ast}(Q),
\qquad \overline{\psi}_{n,j}^{\ast}=\limsup_{Q\to\infty} \psi_{n,j}^{\ast}(Q).
\end{equation}
We have
\[
-\frac{1}{n}\leq \psi_{n,1}^{\ast}(Q)\leq \psi_{n,2}^{\ast}(Q)\leq 
\cdots \leq \psi_{n,n+1}^{\ast}(Q)\leq
1, \qquad Q>1.
\]
As pointed out in~\cite{ss} Mahler's relations 
\eqref{eq:mahla} are essentially equivalent to
\begin{equation} \label{eq:jaja}
\underline{\psi}_{n,j}=-\overline{\psi}_{n,n+2-j}^{\ast}, \qquad 
\overline{\psi}_{n,j}=-\underline{\psi}_{n,n+2-j}^{\ast}, \qquad 1\leq j\leq n+1.
\end{equation}
Schmidt and Summerer~\cite[(1.11)]{ssch} further established the inequalities 
\begin{equation} \label{eq:simul}
j\underline{\psi}_{n,j}+(n+1-j)\overline{\psi}_{n,n+1}\geq 0, 
\qquad j\overline{\psi}_{n,j}+(n+1-j)\underline{\psi}_{n,n+1}\geq 0,
\end{equation}
for $1\leq j\leq n+1$. The dual inequalities
\begin{equation} \label{eq:duales}
j\underline{\psi}_{n,j}^{\ast}+(n+1-j)\overline{\psi}_{n,n+1}^{\ast}\geq 0, 
\qquad j\overline{\psi}_{n,j}^{\ast}+(n+1-j)\underline{\psi}_{n,n+1}^{\ast}\geq 0,
\end{equation}
can be obtained very similarly. We point out that
in the case of equality in \eqref{eq:simul}
and \eqref{eq:duales} respectively,
their proofs in~\cite{ss} directly imply
\begin{equation} \label{eq:vieh}
\underline{\psi}_{n,1}=\underline{\psi}_{n,2}=\cdots=\underline{\psi}_{n,j}, \qquad\qquad
\overline{\psi}_{n,j+1}=\overline{\psi}_{n,j+2}=\cdots
=\overline{\psi}_{n,n+1},
\end{equation}
and
\begin{equation} \label{eq:varvieh}
\underline{\psi}_{n,1}^{\ast}=\underline{\psi}_{n,2}^{\ast}=\cdots=\underline{\psi}_{n,j}^{\ast}, \qquad\qquad
\overline{\psi}_{n,j+1}^{\ast}=\overline{\psi}_{n,j+2}^{\ast}=\cdots=\overline{\psi}_{n,n+1}^{\ast},
\end{equation}
respectively.
Moreover, by~\cite[Theorem~1.4]{ss} the quantities 
in \eqref{eq:fest1} and \eqref{eq:fest2}
are connected to the 
exponents $\lambda_{n,j},\widehat{\lambda}_{n,j}$ and $w_{n,j},\widehat{w}_{n,j}$ via the identities
\begin{equation} \label{eq:umrechnen}
(1+\lambda_{n,j}(\zeta))(1+\underline{\psi}_{n,j})=
(1+\widehat{\lambda}_{n,j}(\zeta))(1+\overline{\psi}_{n,j})=\frac{n+1}{n}, \qquad 1\leq j\leq n+1,
\end{equation}
and 
\begin{equation} \label{eq:umrechnen2}
(1+w_{n,j}(\zeta))\Big(\frac{1}{n}+\underline{\psi}_{n,j}^{\ast}\Big)=
(1+\widehat{w}_{n,j}(\zeta))\Big(\frac{1}{n}+\overline{\psi}_{n,j}^{\ast}\Big)=\frac{n+1}{n}, \qquad 1\leq j\leq n+1.
\end{equation}
In fact it was only observed for $j=1$ in~\cite{ss}, 
but as remarked in~\cite{j2} it is true as well
for $2\leq j\leq n+1$ for the same reason. 

\subsection{Proofs of the lower bounds} 

The following easy observation will play an important role in the
proofs of lower bounds.

\begin{proposition} \label{wpropos}
Let $m,n$ be positive integers and $\zeta$ be a real transcendental number. Then 
\begin{equation} \label{eq:schnee}
w_{m+n,m+i}(\zeta)\geq w_{n,i}(\zeta), \qquad \widehat{w}_{m+n,m+i}(\zeta)\geq \widehat{w}_{n,i}(\zeta),
\qquad 1\leq i\leq n+1.
\end{equation}
\end{proposition}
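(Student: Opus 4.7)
The plan is to promote $i$ linearly independent polynomial witnesses for $w_{n,i}(\zeta)$, respectively $\widehat{w}_{n,i}(\zeta)$, into $m+i$ witnesses for the $(m+n,m+i)$-exponent by multiplying a single one of them by $T^0, T^1,\ldots,T^m$. Concretely, fix $w<w_{n,i}(\zeta)$ and let $P_1,\ldots,P_i\in\mathbb{Z}[T]$ be linearly independent polynomials of degree at most $n$ satisfying $H(P_j)\leq X$ and $|P_j(\zeta)|\leq X^{-w}$, for arbitrarily large $X$. After reordering so that $\deg P_i=\max_{1\leq j\leq i}\deg P_j$, I would consider the family
$$\mathcal{F}=\{P_1,\ldots,P_{i-1},\, P_i,\, T P_i,\, T^2 P_i,\ldots, T^m P_i\},$$
consisting of $m+i$ polynomials, each of degree at most $m+n$.

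Linear independence of $\mathcal{F}$ would follow from a one-line degree argument: from any relation $\sum_{j=1}^{i-1} a_j P_j+Q(T)P_i=0$ with $Q(T)=\sum_{k=0}^m b_k T^k$, the term $QP_i$ has degree $\deg Q+\deg P_i$, while the right-hand side has degree at most $\deg P_i$ by maximality; this forces $Q$ to be a constant, after which the original independence of $P_1,\ldots,P_i$ finishes the argument. The height bounds are preserved since $H(T^k P_i)=H(P_i)\leq X$ (multiplication by $T^k$ is a coefficient shift), and for the evaluations one has $|T^k P_i(\zeta)|=|\zeta|^k |P_i(\zeta)|\leq C\,X^{-w}$ with $C=\max(1,|\zeta|^m)$ depending only on $\zeta$ and $m$.

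Finally, the multiplicative constant $C$ is absorbed by a standard $\varepsilon$-loss: for any $w'<w$ and all $X$ exceeding a threshold $X_0(\zeta,m,w-w')$, we have $CX^{-w}\leq X^{-w'}$. Hence $\mathcal{F}$ witnesses $w_{m+n,m+i}(\zeta)\geq w'$ for a cofinal set of $X$, and letting $w'\nearrow w \nearrow w_{n,i}(\zeta)$ yields the first inequality in \eqref{eq:schnee}. The uniform inequality follows by the identical construction with ``arbitrarily large $X$'' replaced by ``all sufficiently large $X$''. There is essentially no obstacle; the only point requiring care is the initial relabelling so that $\deg P_i$ is maximal, without which the degree argument for linear independence fails in general (for instance if two of the $P_j$ were $P_1$ and $TP_1$, blindly appending $TP_1$ to $\{P_1,P_2\}$ would be redundant).
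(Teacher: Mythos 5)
Your construction is exactly the one the paper uses: both proofs append $T^0 P,\dots,T^m P$ for the maximal-degree witness $P$ to the remaining $i-1$ witnesses, observe that heights are unchanged and evaluations gain only a harmless factor $\max(1,|\zeta|^m)$, and absorb that factor in the supremum. Your degree argument for linear independence and the remark on why the maximal-degree polynomial must be the one multiplied are details the paper leaves implicit ("It is not hard to see\dots"), so the proposal is correct and matches the paper's proof.
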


\begin{proof}
By the definition of $w_{n,i}(\zeta)$, for certain arbitrarily large $X$
there exist linearly independent integer polynomials
$P_{1},\ldots, P_{i}$ of degree at most $n$ with the properties
\[
\max_{1\leq j\leq i} H(P_{j}) \leq X, \qquad 
\max_{1\leq j\leq i} \vert P_{j}(\zeta)\vert 
\leq H(P_{j})^{-w_{n,i}(\zeta)+\epsilon}.
\]
Without loss of generality assume the degree of $P_{1}$ is maximal
among the $P_{j}$. For any $m\geq 1$ consider the set
of polynomials
\[
\mathcal{P}_{m,n,i}=\mathcal{P}_{m,n,i}(X)=
\{P_{1}(T),TP_{1}(T),T^{2}P_{1}(T),\ldots ,T^{m}P_{1}(T),P_{2}(T),\ldots,P_{i}(T)\}
\]
It is not hard to see that $\mathcal{P}_{m,n,i}$ consists of 
$m+i$ polynomials
of degree at most $n+m$,
which are linearly independent as well, and satisfies 
\[
\max_{P\in{\mathcal{P}_{m,n,i}}} H(P) \leq X, \qquad 
\max_{P\in{\mathcal{P}_{m,n,i}}} \vert P(\zeta)\vert 
\leq \max\{1,\vert \zeta\vert^{m}\} H(P)^{-w_{n,i}(\zeta)+\epsilon}.
\]
The left inequalities of \eqref{eq:schnee} follow. The right ones
are shown similarly using the definition of $\widehat{w}_{n,i}(\zeta)$
and considering any large $X$.
\end{proof}

In fact we only need the case $i=1$. First
we deduce Theorem~\ref{unumber} from the proposition.

\begin{proof} [Proof of Theorem~\ref{unumber}]
By \eqref{eq:reihenfolge} it suffices to prove \eqref{eq:neuheit} for $n\geq m$. So let $n=m+k$ with $k\geq 0$.
From $w_{m}(\zeta)=\infty$ and Proposition~\ref{wpropos} we derive $w_{n,k+1}(\zeta)=\infty$. 
Together with \eqref{eq:umrechnen2} we infer
\[
\underline{\psi}_{n,k+1}^{\ast}=-\frac{1}{n}.
\]
Hence \eqref{eq:duales} with $j=k+1$ and \eqref{eq:jaja} yield
\begin{equation} \label{eq:vdb}
\underline{\psi}_{n,1}=-\overline{\psi}_{n,n+1}^{\ast}\leq \frac{k+1}{(n+1)-(k+1)}\underline{\psi}_{n,k+1}^{\ast}
= -\frac{k+1}{(k+m)m}.
\end{equation}
Inserting in \eqref{eq:umrechnen} yields $\lambda_{n}(\zeta)\geq 1/(m-1)$ as asserted.
Now assume $m\geq 2$ and $w_{m-1}(\zeta)=m-1$. We first only show $\lambda_{n}(\zeta)=1$ for $n\geq m-1$.
The properties $w_{m-1}(\zeta)=m-1$ and $\lambda_{m-1}(\zeta)=1/(m-1)$ are equivalent,
which follows for example from \eqref{eq:khintchine}. 
Thus for $n\geq m-1$ we have $1/(m-1)=\lambda_{m-1}\geq \lambda_{n}\geq 1/(m-1)$ by
\eqref{eq:neuheit} and \eqref{eq:reihenfolge}, and our special case of \eqref{eq:neueun} follows.
For the general claim \eqref{eq:neueun}, observe that reversing the above process,
the identity $\lambda_{n}(\zeta)=1/m$ implies equality in the inequality in \eqref{eq:vdb}. 
As observed in \eqref{eq:vieh} and \eqref{eq:varvieh} above,
this can only happen when $\overline{\psi}_{n,k+2}^{\ast}=\overline{\psi}_{n,k+3}^{\ast}=\cdots=\overline{\psi}_{n,n+1}^{\ast}$.
Noticing $n+1-(k+1)=m$,
Mahler's relations \eqref{eq:jaja} yield
$\underline{\psi}_{n,1}=\underline{\psi}_{n,2}=\cdots=\underline{\psi}_{n,m}$,
and by \eqref{eq:umrechnen2} we infer $\lambda_{n,1}(\zeta)=\cdots=\lambda_{n,m}(\zeta)$.
The last claim follows similarly from Khintchine's principle \eqref{eq:khintchine}.
\end{proof}

Similar considerations and a well-known existence result
lead to a proof of Theorem~\ref{besser}.

\begin{proof}[Proof of Theorem~\ref{besser}]
From combining Theorem~\ref{unumber} and \eqref{eq:trampel} we deduce \eqref{eq:1} and \eqref{eq:2}, apart 
from $\lambda_{n,2}(\zeta)=1$.
This refinement is inferred similarly as the general claim of \eqref{eq:neueun} 
in the proof of Theorem~\ref{unumber}.
Indeed, since
we already know $\lambda_{n}(\zeta)=1$, the same argument
implies $\overline{\psi}_{n,n}^{\ast}=\overline{\psi}_{n,n+1}^{\ast}$
or equivalently $\underline{\psi}_{n,1}=\underline{\psi}_{n,2}$,
and \eqref{eq:umrechnen} yields $\lambda_{n,2}(\zeta)=\lambda_{n}(\zeta)=1$. In order to show that any sequence as in \eqref{eq:3} 
belongs to the joint spectrum,
it suffices to notice that $U_{2}$-numbers with any prescribed value $w=w_{1}(\zeta)$ can be constructed by means
of continued fractions as pointed out in~\cite[paragraph~7.6 on page~158]{bugbuch}. Eventually, from Theorem~\ref{nabum}
(or Theorem~\ref{juchu}) we know that $\lim_{n\to\infty} \lambda_{n}(\zeta)>1/2$ implies $\zeta$ must be a 
$U_{1}$-number or $U_{2}$-number. In case of a $U_{2}$-number the above applies, for a Liouville number $\zeta$ the 
sequence $(\lambda_{n}(\zeta))_{n\geq 1}$
takes the value $\lambda_{n}(\zeta)=\infty$ anyway for all $n\geq 1$ as noticed in~\cite[Corollary~2]{bug}. 
\end{proof}

Theorem~\ref{tnumber} follows similarly as Theorem~\ref{unumber}, with slightly more computation involved. 

\begin{proof}[Proof of Theorem~\ref{tnumber}]
Let $m,n$ be positive integers and $C\geq 1$ a real number to be chosen later and assume
we have $w_{n}(\zeta)\geq Cn$. Proposition~\ref{wpropos}
yields
\[
w_{m+n,m+1}(\zeta)\geq nC.
\]
With \eqref{eq:umrechnen2} we obtain
\[
\underline{\psi}_{m+n,m+1}^{\ast}\leq \frac{m+n+1}{(m+n)(1+nC)}-\frac{1}{m+n}=\frac{m+n(1-C)}{(m+n)(1+nC)}.
\]
Hence \eqref{eq:duales} with $j=m+1$ and \eqref{eq:jaja} imply
\begin{equation} \label{eq:hofa}
\underline{\psi}_{m+n,1}=-\overline{\psi}_{m+n,m+n+1}^{\ast}\leq \frac{m+1}{n}\underline{\psi}_{m+n,m+1}^{\ast}
= \frac{m+1}{n}\cdot\frac{m+n(1-C)}{(m+n)(1+nC)}.
\end{equation}
Application of \eqref{eq:umrechnen} yields
\begin{equation} \label{eq:bratschi}
\lambda_{m+n}(\zeta)\geq \frac{m+n+1}{m+n}\cdot\frac{1}{1+\frac{m+1}{n}\frac{m+n(1-C)}{(m+n)(1+nC)}} -1=\frac{Cn-m}{m+n(1+C(n-1))}.
\end{equation}
Let $m=\lceil Rn\rceil$ with the optimal parameter $R=(C-1)/2$. A short computation shows
\begin{equation} \label{eq:dtrump}
(m+n)\lambda_{m+n}(\zeta)\geq \left(\frac{C+1}{2}\right)^{2}\frac{n}{R+1+(n-1)C}-\epsilon>\frac{(C+1)^{2}}{4C}-2\epsilon,
\end{equation}
for $n\geq n_{0}(C,\epsilon)$. We infer \eqref{eq:wiedefwoben} as we may choose $C$ arbitrarily close to $\overline{w}$ and $\underline{w}$
respectively, for certain arbitrarily large $n$ and all large $n$,
respectively.
For $T$-numbers we may choose arbitrarily large $C$ 
for certain large $n$, and the claim \eqref{eq:altheit} follows.
\end{proof}

We kind of dualize the proof for the uniform exponents.

\begin{proof}[Proof of Theorem~\ref{uniformiert}]
We first modify the proof of Theorem~\ref{tnumber} to
show the left inequalities of \eqref{eq:family}.
We apply the uniform inequality of Proposition~\ref{wpropos}
to see that if $\widehat{w}_{n}(\zeta)\geq Cn$ then for any $m\geq 1$ 
we have
\[
\widehat{w}_{m+n,m+1}(\zeta)\geq nC.
\]
With \eqref{eq:umrechnen2} we infer
\[
\overline{\psi}_{m+n,m+1}^{\ast}\leq \frac{m+n+1}{(m+n)(1+nC)}-\frac{1}{m+n}=\frac{m+n(1-C)}{(m+n)(1+nC)}.
\]
Again \eqref{eq:duales} with $j=m+1$ and \eqref{eq:jaja} yield
\[
\overline{\psi}_{m+n,1}=-\underline{\psi}_{m+n,m+n+1}^{\ast}\leq \frac{m+1}{n}\overline{\psi}_{m+n,m+1}^{\ast}
= \frac{m+1}{n}\cdot\frac{m+n(1-C)}{(m+n)(1+nC)}.
\]
We apply \eqref{eq:umrechnen} and obtain
\[
\widehat{\lambda}_{m+n}(\zeta)\geq \frac{Cn-m}{m+n(1+C(n-1))}.
\]
Again with the parameter choice $m=\lceil n(C-1)/2\rceil$ we
obtain the left inequalities in \eqref{eq:family} by multiplication 
with $m+n$. The right inequalities of \eqref{eq:family} are a
consequence of \eqref{eq:ribiza} as we show now.
Let $\epsilon>0$.
By definition of $\overline{\widehat{w}}(\zeta)$ there exist arbitrarily
large $n$ such that $\widehat{w}_{n}(\zeta)
\geq n(\overline{\widehat{w}}(\zeta)-\epsilon)$. 
Provided $n$ was chosen sufficiently large,
for
$m=\lfloor n(\overline{\widehat{w}}(\zeta)-\epsilon)\rfloor$,
from \eqref{eq:ribiza} and $w_{m}(\zeta)\geq m$ we infer
\[
\widehat{\lambda}_{m+n-1}(\zeta)=
\widehat{\lambda}_{\lfloor n(\overline{\widehat{w}}(\zeta)-\epsilon)\rfloor+n-1}(\zeta)\leq
\max\left\{ \frac{1}{m}, \frac{1}{\widehat{w}_{n}(\zeta)}\right\}\leq 
\frac{1}{n(\overline{\widehat{w}}(\zeta)-2\epsilon)}.
\]
Since $m+n-1\leq n(1+\overline{\widehat{w}}(\zeta)-\epsilon)$
we obtain
\[
(m+n-1)\widehat{\lambda}_{m+n-1}(\zeta)\leq 
\frac{1+\overline{\widehat{w}}(\zeta)-\epsilon}{\overline{\widehat{w}}(\zeta)-2\epsilon}.
\]
We conclude
$\underline{\widehat{\lambda}}(\zeta)\leq 
(\overline{\widehat{w}}(\zeta)+1)/\overline{\widehat{w}}(\zeta)$ 
as $\epsilon\to 0$ and $n\to\infty$. 
The claim 
$\overline{\widehat{\lambda}}(\zeta)\leq 
(\widehat{\underline{w}}(\zeta)+1)/\widehat{\underline{w}}(\zeta)$ 
is derived similarly, starting with
any large $n$. Finally \eqref{eq:day} follows from 
\eqref{eq:bound2}, \eqref{eq:vergleich}
and Theorem~\ref{difizil}.
\end{proof}

\subsection{Proofs of Theorem~\ref{bugadam} and Theorem~\ref{genau}}
\label{kummtscho}
We now deduce Theorem~\ref{bugadam} from Theorem~\ref{tnumber}.

\begin{proof}[Proof of Theorem~\ref{bugadam}]
First we show that \eqref{eq:uaah} implies \eqref{eq:tja}.
Let $N= \lceil w_{n}(\zeta)+\widehat{w}_{n}(\zeta)\rceil+n-1$
and apply \eqref{eq:vor1}.
We check that the left
bound is larger and hence
\begin{equation} \label{eq:priskard}
\lambda_{N}(\zeta)\leq \frac{1}{\widehat{w}_{n}(\zeta)}.
\end{equation}
On the other hand,
when we write $N=m+(n+u)$, such that 
$m=\lceil w_{n}(\zeta)+\widehat{w}_{n}(\zeta)\rceil-(u+1)$, 
from \eqref{eq:bratschi} and 
$N\leq w_{n}(\zeta)+\widehat{w}_{n}(\zeta)+n$ we obtain
\begin{equation} \label{eq:turmrd}
\lambda_{N}(\zeta) \geq \frac{w_{n+u}(\zeta)-m}
{N+(n+u-1)w_{n+u}(\zeta)}
\geq \frac{w_{n+u}(\zeta)-m}
{w_{n}(\zeta)+\widehat{w}_{n}(\zeta)+(n+u-1)w_{n+u}(\zeta)+n}.
\end{equation}
Since $\widehat{w}_{n}(\zeta)-(n+u-1)>0$ by assumption,
combination of \eqref{eq:priskard} and \eqref{eq:turmrd} 
yields after elementary rearrangements
\begin{equation} \label{eq:diejohannard}
w_{n+u}(\zeta) \leq 
\frac{(m+1)\widehat{w}_{n}(\zeta)+w_{n}(\zeta)+n}{\widehat{w}_{n}(\zeta)-(n+u-1)}.
\end{equation}
By assumption \eqref{eq:uaah} we may apply \eqref{eq:salsa},
and for the index $j=0$ we obtain
\begin{equation}  \label{eq:luciar}
w_{n}(\zeta)\leq 
\frac{(n-1)\widehat{w}_{n}(\zeta)}{\widehat{w}_{n}(\zeta)-n}.
\end{equation}
Plugging this in \eqref{eq:diejohannard} and
estimating $m\leq  w_{n}(\zeta)+\widehat{w}_{n}(\zeta)-u$, 
after a short calculation we obtain \eqref{eq:tja}.

For the unconditional bound, notice that the right
hand side in \eqref{eq:diejohannard} is monotonic decreasing
as a function of $\widehat{w}_{n}(\zeta)$ in the interval
$(n+u-1,\infty)$. Thus, by $\widehat{w}_{n}^{\ast}(\zeta)\leq \widehat{w}_{n}(\zeta)$ from \eqref{eq:bound2}, upon the assumption 
$\widehat{w}_{n}^{\ast}(\zeta)-(n+u-1)>0$ from \eqref{eq:diejohannard}
we may conclude
\begin{equation} \label{eq:diejohanna}
w_{n+u}(\zeta) \leq 
\frac{(m+1)\widehat{w}_{n}^{\ast}(\zeta)+w_{n}(\zeta)+n}{\widehat{w}_{n}^{\ast}(\zeta)-(n+u-1)}.
\end{equation}
Again since $\widehat{w}_{n}^{\ast}(\zeta)-n\geq \widehat{w}_{n}^{\ast}(\zeta)-(n+u-1)>0$, we apply the unconditional estimate 
\eqref{eq:sturz} for $j=0$ which reads
\begin{equation}  \label{eq:luciarde}
w_{n}(\zeta)\leq 
\frac{(n-1)\widehat{w}_{n}^{\ast}(\zeta)}{\widehat{w}_{n}^{\ast}(\zeta)-n},
\end{equation}
and the same calculation 
as above shows \eqref{eq:unkon}.
Finally, the asymptotic claim \eqref{eq:asyms}
is obtained from \eqref{eq:bound2}.
\end{proof}

Finally we put the results together to prove Theorem~\ref{genau}. 

\begin{proof}[Proof of Theorem~\ref{genau}]
The inequalities in \eqref{eq:leftie}
have already been noticed in Theorem~\ref{tnumber} and Theorem~\ref{juchu}. 
The inequality $\sigma(\zeta)\leq -1/\tau(\zeta)$ follows easily from \eqref{eq:vor2}.
The reverse inequality $\sigma(\zeta)\geq -1/\tau(\zeta)$ follows from \eqref{eq:bratschi}
by taking $m=n^{\tau(\zeta)-\delta}$ and letting $\delta\to 0$, 
observing that $\log C/\log n$ is arbitrarily close to $\tau(\zeta)-1$ for certain large $n$.
\end{proof}

\subsection{Proofs of the metric results}
For the proof of Theorem~\ref{dadthm} we essentially reverse
the proofs of Theorem~\ref{juchu} and Theorem~\ref{tnumber}.
We have to be careful with the occurring error terms
from rounding to integers in the process. For simplicity let
\[
t_{n}^{w}= \dim(T_{n}^{w}), \qquad\qquad
T_{n}^{w}= \{ \zeta\in\mathbb{R}: w_{n}(\zeta)\geq w\}.
\]
By \eqref{eq:bern} we have $t_{n}^{w}= (n+1)/(w+1)$ 
for $n\geq 1, w\geq n$.

\begin{proof}[Proof of Theorem~\ref{dadthm}]
For $\widetilde{\lambda}>1$ as in the theorem let
\[
C= 2\widetilde{\lambda}-1+
2\sqrt{\left(\widetilde{\lambda}\right)^2-\widetilde{\lambda}},
\qquad \sigma= \frac{2}{1+C},
\]
where $C \geq 1$ is a solution
to $(C+1)^{2}/(4C)=\widetilde{\lambda}$.
Assume $N$ is large and 
further let $n= \left\lceil n^{\ast}\right\rceil$ with
$n^{\ast}= \sigma N$, such that
\[
n= \left\lceil n^{\ast}\right\rceil
=\left\lceil \sigma N\right\rceil
= \left\lceil\frac{2}{1+C}N\right\rceil.
\]
Obviously $0\leq \sigma N-n<1$.
Assume $\zeta$ satisfies
$w_{n}(\zeta)\geq Cn$.
We want to show that $\lambda_{N}(\zeta)$ is
then essentially bounded below by $\theta_{N}$. 
We proceed as in Theorem~\ref{tnumber} for our present $n$.
For any integer $m\geq 1$, we obtain 
\begin{equation} \label{eq:obben}
(m+n)\lambda_{m+n}(\zeta)\geq 
\frac{(m+n)(C n-m)}{m+n(1+C(n-1))}.
\end{equation}
We choose $m=\left\lceil m^{\ast}\right\rceil$ with
$m^{\ast}= N-n^{\ast}=n^{\ast}\cdot (C-1)/2$.
Since
\begin{equation}  \label{eq:fani}
0\leq m-m^{\ast}<1, \qquad 0\leq n-n^{\ast}<1,
\end{equation}
we conclude
\begin{equation} \label{eq:neux}
m+n\in \{N,N+1\}.  
\end{equation}
Denote by $\Phi(m,n)$ the right hand side in \eqref{eq:obben} 
treated as a function in two variables.
When we replace $m,n$ by $m^{\ast}, n^{\ast}$, a computation shows
\[
\Phi(m^{\ast},n^{\ast})= 
\frac{ n^{\ast}(\frac{C+1}{2})^2}{C(n^{\ast}-\frac{1}{2})+\frac{1}{2}-C}
\]
We readily deduce that for some constant $c_{0}= c_{0}(C)$ we have
\[
\Phi(m^{\ast},n^{\ast})\geq  
\frac{(C+1)^{2}}{4C}-\frac{c_{0}}{n}.
\]
Similarly with \eqref{eq:fani} we easily verify that for some 
constant $c_{1}=c_{1}(C)$ we have 
\[
\Phi(m,n)\geq \Phi(m^{\ast},n^{\ast})-\frac{c_{1}}{n}.
\]
Since $n\gg_{C} N$ by construction, 
combination of the two inequalities yields
\[
\Phi(m,n)\geq  
\frac{(C+1)^{2}}{4C}-\frac{c_{2}}{n}
\geq \frac{(C+1)^{2}}{4C}-\frac{c_{3}}{N},
\]
with $c_{2}(C)= c_{0}(C)+c_{1}(C)$
and some new constant $c_{3}=c_{3}(C)$.
Since $\Phi(m,n)$ was the lower bound in \eqref{eq:obben}
and by \eqref{eq:neux}, for some new constant $c_{4}= c_{4}(C)$
we infer
\[
N \lambda_{N}(\zeta) \geq \frac{N}{N+1}\Phi(m,n) \geq \frac{N}{N+1}\cdot 
\left(\frac{(C+1)^{2}}{4C}-\frac{c_{3}}{N}\right)
\geq  \frac{(C+1)^{2}}{4C}-\frac{c_{4}}{N}.
\]
Since $(C+1)^{2}/(4C)=\widetilde{\lambda}$, the argument shows
\[
T_{n}^{Cn}
\subseteq
H_{N}^{\frac{\varphi}{N}}, \qquad\qquad
\varphi=\widetilde{\lambda}-\frac{c_{4}}{N}.
\]
Thus by \eqref{eq:bern} we estimate the dimension as
\[
h_{N}^{\frac{\varphi}{N}}\geq 
t_{n}^{Cn}=
\frac{n+1}{Cn+1}\geq \frac{1}{C}.
\]
The claim \eqref{eq:neuy} follows by starting with
$\widetilde{\lambda}+\epsilon$ instead of $\widetilde{\lambda}$
and letting $\epsilon$ tend to $0$.
The upper bound \eqref{eq:baldda} follows similarly
from~\eqref{eq:juchur}, the proof is left to the reader.

We show \eqref{eq:balduin}. 
Let $\widetilde{\lambda}\geq 2$ be given
and assume $N\lambda_{N}(\zeta)>\widetilde{\lambda}$ for some large $N$. 
Let $\beta\in[2,3), \gamma\geq 1$ to be chosen later. For now 
for simplicity assume $n=N/\beta$ is an integer. 
Assume $w_{n}(\zeta)\leq (\beta-1) n$. Then since $\beta\in[2,3)$
we may apply \eqref{eq:vor4} to $N,n$ and obtain
\begin{equation} \label{eq:pjo}
\lambda_{m}(\zeta)\leq 
\frac{1}{m-n+1-w_{m-2n}(\zeta)}
\leq \frac{1}{(\beta-1) n-w_{(\beta-2)n}(\zeta)}
\end{equation}
Now assume
\[
w_{(\beta-2)n}(\zeta)\leq \gamma (\beta-2)n.
\]
Then multiplication of \eqref{eq:pjo} with $N=\beta n$ yields
\begin{equation}   \label{eq:wirdscho}
N\lambda_{N}(\zeta)\leq \frac{\beta}{\beta-1-\gamma(\beta-2)}.
\end{equation}
In other words, if we have 
\[
N\lambda_{N}(\zeta)>\frac{\beta}{\beta-1-\gamma(\beta-2)},
\]
then either $w_{n}(\zeta)>(\beta-1) n$ or $w_{(\beta-2)n}(\zeta)> \gamma (\beta-2)n$. Hence
\begin{equation} \label{eq:hoerer}
\mathcal{D}:=\mathcal{D}_{N,\beta,\gamma}:=G_{N}^{\frac{\beta}{N(\beta-1-\gamma(\beta-2))}}=
\{ \zeta\in\mathbb{R}: N\lambda_{N}(\zeta)>\frac{\beta}{\beta-1-\gamma(\beta-2)}\}
\end{equation}
is contained in 
$\mathcal{A}\cup \mathcal{B}=\mathcal{A}_{N,\beta}\cup \mathcal{B}_{N,\beta,\gamma}$ with
\begin{equation}  \label{eq:aandb}
\mathcal{A}:= T_{n}^{(\beta-1)n}, \qquad
\mathcal{B}:= T_{(\beta-2)n}^{\gamma (\beta-2)n}.
\end{equation}
The dimensions of the sets $\mathcal{A},\mathcal{B}$
can be determined with \eqref{eq:bern} as 
\[
\dim(\mathcal{A})
= \frac{n+1}{(\beta-1) n+1}= \frac{1}{\beta-1}+O(n^{-1}),
\quad
\dim(\mathcal{B})
= \frac{(\beta-2)n+1}{\gamma(\beta-2)n+1}= \frac{1}{\gamma}+O(n^{-1}).
\]
Hence, for given $\beta\in[2,3), \gamma\geq 1$, the 
set $\mathcal{D}$ in \eqref{eq:hoerer} has Hausdorff dimension at most 
\begin{equation}  \label{eq:dbound}
\dim(\mathcal{D})\leq \dim(\mathcal{A}\cup \mathcal{B})= 
\max \{ \dim(\mathcal{A}), \dim(\mathcal{B})\}=
\max\{ \frac{1}{\beta-1}, \frac{1}{\gamma}\}+O(n^{-1}).
\end{equation}
Put
\begin{equation}  \label{eq:saturnalien}
\gamma= \frac{2\widetilde{\lambda}-1+\sqrt{4\widetilde{\lambda}^2-8\widetilde{\lambda}+1}}{2\widetilde{\lambda}}, \qquad 
\beta=\gamma+1.
\end{equation}
For these choices one checks
that the right hand side expression in \eqref{eq:hoerer} equals
$\widetilde{\lambda}$. On the other hand, by construction clearly 
$(\beta-1)^{-1}=\gamma^{-1}$ and this value yields
the right hand side of \eqref{eq:balduin} as an upper bound in \eqref{eq:dbound}. 
One further readily checks
that for any given $\widetilde{\lambda}\geq 2$, the initial assumptions
$\beta\in[2,3)$ and $\gamma\geq 1$ for $\beta, \gamma$
in \eqref{eq:saturnalien} are satisfied.
We still have to deal with the problem that
$N/\beta$ is no integer in general.
However, for given $\beta\in[2,3)$ if we
let $n=\lfloor N/\beta\rfloor$ then
$\beta^{\prime}:=N/n=\beta+O(N^{-1})$. 
One checks that the above procedure starting with $\beta^{\prime}$ 
instead of $\beta$ leads to an additional error 
at most $O(N^{-1})$. 
We have proved \eqref{eq:balduin}.

Finally we show \eqref{eq:zusatz}. Within the proof 
of Theorem~\ref{unifthm},
for even $N$ we have established the inclusion
\[
G_{N}^{\frac{2}{N}} \subseteq
T_{\frac{N}{2}}^{\frac{N}{2}+1}.
\]
Thus $g_{N}^{2/N}\leq t_{N/2}^{N/2+1}= (N/2+1)/(N/2+2)=(N+2)/(N+4)$ 
follows from \eqref{eq:bern} again.
\end{proof}

Theorem~\ref{glanz} is obtained in a similar way.

\begin{proof}[Proof of Theorem~\ref{glanz}]
Assume $\lambda>0$ and $\zeta$ satisfies 
$\lambda_{N}(\zeta)>\lambda$. 
If we let $k=\lceil \lambda^{-1}\rceil$ then~\eqref{eq:vor2} implies
$w_{k}(\zeta)>N-2k+1$. In other 
words $G_{N}^{\lambda} \subseteq T_{k}^{N-2k+1}$.
Hence, when $N-2k+1\geq k$ or equivalently $N\geq 3k-1$, the formula \eqref{eq:bern} implies \eqref{eq:expre}. 
The left lower bound in \eqref{eq:stolt} 
rephrases \eqref{eq:altegle}.
To prove the right bound, we again reverse
the proof of Theorem~\ref{juchu}. In
the estimate \eqref{eq:bratschi} identify $N=m+n$. 
We see that if for $n\in\{1,2,\ldots,N\}$ and $C\geq 1$
the identity
\[
\frac{C n-(N-n)}{N-n+n(1+C (n-1))}= \lambda
\]
holds, then $T_{n}^{Cn} \subseteq H_{N}^{\lambda}$.
By \eqref{eq:bern}, we further conclude
\begin{equation} \label{eq:tadaaa}
h_{N}^{\lambda} \geq t_{n}^{Cn}= \frac{n+1}{nC+1}.
\end{equation}
In other words, we have
\begin{equation}  \label{eq:pride}
h_{N}^{\lambda} \geq \frac{n+1}{nC+1}, \qquad\qquad \text{if} \quad
C= \frac{N(\lambda+1)-n}{(\lambda+1)n-\lambda n^2}. 
\end{equation}
Inserting the expression for $C$, after some simplification we derive
\begin{equation} \label{eq:schauher}
h_{N}^{\lambda} \geq \max_{1\leq n\leq N}
\frac{(1+n)(1+\lambda-n\lambda)}{(1+\lambda)(N+1-n)}.
\end{equation}
We may choose
\[
n= \left\lfloor \frac{\lambda^{-1}+1}{2} \right\rfloor=
\left\lfloor \frac{\lambda+1}{2\lambda} \right\rfloor,
\]
since $N\geq \lambda^{-1}\geq (1+\lambda)/(2\lambda)\geq n\geq 1$ follows from the assumption $\lambda\leq 1$. Insertion 
in \eqref{eq:pride} yields \eqref{eq:stolt}.
\end{proof}

We infer~\eqref{eq:tritt} with a refined treatment of 
\eqref{eq:schauher}, and \eqref{eq:freitag} similarly to \eqref{eq:balduin}. We only sketch the proof.

\begin{proof}[Proof of Theorem~\ref{thmmet}]
It can easily be checked that the right hand side maximum in \eqref{eq:schauher} is obtained for the integer
$n$ if $\lambda\in{I_{n}}$,
and yields the bounds \eqref{eq:tritt} in the theorem.
The estimate \eqref{eq:freitag} can be derived
with method of the proof of \eqref{eq:balduin}
and using the precise dimension formula for $\mathcal{A}$ 
and $\mathcal{B}$ defined in \eqref{eq:aandb}, 
we skip the computations.
\end{proof}

\end{document}